\documentclass[11pt,twoside]{article}

\usepackage{amsfonts,amsmath,amssymb,amsthm}
\usepackage{fullpage}
\usepackage{graphics,graphicx}
\usepackage{cases}
\usepackage{comment}

\setlength{\textwidth}{\paperwidth}
\addtolength{\textwidth}{-6cm}
\setlength{\textheight}{\paperheight}
\addtolength{\textheight}{-4cm}
\addtolength{\textheight}{-1.1\headheight}
\addtolength{\textheight}{-\headsep}
\addtolength{\textheight}{-\footskip}
\setlength{\oddsidemargin}{0.5cm}
\setlength{\evensidemargin}{0.5cm}

\newlength{\widebarargwidth}
\newlength{\widebarargheight}
\newlength{\widebarargdepth}
\DeclareRobustCommand{\widebar}[1]{%
  \settowidth{\widebarargwidth}{\ensuremath{#1}}%
  \settoheight{\widebarargheight}{\ensuremath{#1}}%
  \settodepth{\widebarargdepth}{\ensuremath{#1}}%
  \addtolength{\widebarargwidth}{-0.3\widebarargheight}%
  \addtolength{\widebarargwidth}{-0.3\widebarargdepth}%
  \makebox[0pt][l]{\hspace{0.3\widebarargheight}%
    \hspace{0.3\widebarargdepth}%
    \addtolength{\widebarargheight}{0.3ex}%
    \rule[\widebarargheight]{0.95\widebarargwidth}{0.1ex}}%
  {#1}}

\makeatletter
\long\def\@makecaption#1#2{
        \vskip 0.8ex
        \setbox\@tempboxa\hbox{\small {\bf #1:} #2}
        \parindent 1.5em  
        \dimen0=\hsize
        \advance\dimen0 by -3em
        \ifdim \wd\@tempboxa >\dimen0
                \hbox to \hsize{
                        \parindent 0em
                        \hfil 
                        \parbox{\dimen0}{\def\baselinestretch{0.96}\small
                                {\bf #1.} #2
                                } 
                        \hfil}
        \else \hbox to \hsize{\hfil \box\@tempboxa \hfil}
        \fi
        }
\makeatother

\usepackage[colorlinks]{hyperref}

\newcommand{\condind}{\ensuremath{\perp\!\!\!\perp}}
\newcommand{\opnorm}[1]{\left|\!\left|\!\left|{#1}\right|\!\right|\!\right|}

\newcommand{\E}{\ensuremath{\mathbb{E}}}
\newcommand{\mprob}{\ensuremath{\mathbb{P}}}

\newcommand{\betahat}{\ensuremath{\widehat{\beta}}}
\newcommand{\betastar}{\ensuremath{\beta^*}}
\newcommand{\betatil}{\ensuremath{\widetilde{\beta}}}

\newcommand{\real}{\ensuremath{\mathbb{R}}}
\newcommand{\defn}{\ensuremath{:=}}
\newcommand{\inprod}[2]{\ensuremath{\langle #1 , \, #2 \rangle}}
\DeclareMathOperator{\sign}{sign}
\newcommand{\supp}{\ensuremath{\operatorname{supp}}}

\newcommand{\order}{{\mathcal{O}}}

\newcommand{\nutil}{\ensuremath{\widetilde{\nu}}}
\newcommand{\Loss}{\ensuremath{\mathcal{L}}}

\newcommand{\T}{\ensuremath{\mathcal{T}}}
\newcommand{\Ball}{\ensuremath{\mathbb{B}}}
\newcommand{\var}{\ensuremath{\operatorname{var}}}
\newcommand{\scriptE}{\ensuremath{\mathcal{E}}}
\newcommand{\ftil}{\ensuremath{\widetilde{f}}}
\newcommand{\scriptD}{\ensuremath{\mathcal{D}}}

\newcommand{\boracle}{\ensuremath{\betahat^{\mathcal{O}}}}
\newcommand{\zhat}{\ensuremath{\widehat{z}}}
\newcommand{\Lossbar}{\ensuremath{\widebar{\Loss}}}
\newcommand{\etabar}{\ensuremath{\widebar{\eta}}}

\newcommand{\scriptM}{\ensuremath{\mathcal{M}}}
\newcommand{\betamin}{\ensuremath{\betastar_{\min}}}
\newcommand{\ztil}{\ensuremath{\widetilde{z}}}
\newcommand{\Ztil}{\ensuremath{\widetilde{Z}}}
\newcommand{\htil}{\ensuremath{\widetilde{h}}}
\newcommand{\Qhat}{\ensuremath{\widehat{Q}}}

\newtheorem{lem*}{Lemma}
\newtheorem{thm*}{Theorem}
\newtheorem{alg*}{Algorithm}
\newtheorem{cor*}{Corollary}
\newtheorem{prop*}{Proposition}

\newtheorem{assumption}{Assumption}
\newtheorem*{definition*}{Definition}

\bibliographystyle{plain}

\begin{document}

\begin{center}
	
{\bf{\LARGE{Statistical consistency and asymptotic normality for high-dimensional robust $M$-estimators}}}
	
\vspace*{.2in}

\begin{tabular}{c}
{\large{Po-Ling Loh}} \\
{\large{\texttt{loh@wharton.upenn.edu}}} \\
\vspace*{.005in} \\
Department of Statistics \\
The Wharton School \\
University of Pennsylvania \\
Philadelphia, PA 19104
\end{tabular}

\vspace*{.2in}

\today

\end{center}

\begin{abstract}

We study theoretical properties of regularized robust $M$-estimators, applicable when data are drawn from a sparse high-dimensional linear model and contaminated by heavy-tailed distributions and/or outliers in the additive errors and covariates. We first establish a form of local statistical consistency for the penalized regression estimators under fairly mild conditions on the error distribution: When the derivative of the loss function is bounded and satisfies a local restricted curvature condition, all stationary points within a constant radius of the true regression vector converge at the minimax rate enjoyed by the Lasso with sub-Gaussian errors. When an appropriate nonconvex regularizer is used in place of an $\ell_1$-penalty, we show that such stationary points are in fact unique and equal to the local oracle solution with the correct support---hence, results on asymptotic normality in the low-dimensional case carry over immediately to the high-dimensional setting. This has important implications for the efficiency of regularized nonconvex $M$-estimators when the errors are heavy-tailed. Our analysis of the local curvature of the loss function also has useful consequences for optimization when the robust regression function and/or regularizer is nonconvex and the objective function possesses stationary points outside the local region. We show that as long as a composite gradient descent algorithm is initialized within a constant radius of the true regression vector, successive iterates will converge at a linear rate to a stationary point within the local region. Furthermore, the global optimum of a convex regularized robust regression function may be used to obtain a suitable initialization. The result is a novel two-step procedure that uses a convex $M$-estimator to achieve consistency and a nonconvex $M$-estimator to increase efficiency. We conclude with simulation results that corroborate our theoretical findings.

\end{abstract}


\section{Introduction}

Ever since robustness entered the statistical scene in Box's classical paper of 1953~\cite{Box53}, many significant steps have been taken toward analyzing and quantifying robust statistical procedures---notably the work of Tukey~\cite{Tuk60}, Huber~\cite{Hub64}, and Hampel~\cite{Ham68}, among others. Huber's seminal work on $M$-estimators~\cite{Hub64} established asymptotic properties of a class of statistical estimators containing the maximum likelihood estimator, and provided initial theory for constructing regression functions that are robust to deviations from normality. Despite the substantial body of now existent work on robust $M$-estimators, however, research on high-dimensional regression estimators has mostly been limited to penalized likelihood-based approaches (e.g., \cite{Tib96, FanLi01, FriEtal08, RavEtal10}). Several recent papers~\cite{NegEtal12, LohWai13, LohWai14} have shed new light on high-dimensional $M$-estimators, by presenting a fairly unified framework for analyzing statistical and optimization properties of such estimators. However, whereas the $M$-estimators studied in those papers are finite-sample versions of globally convex functions, many important $M$-estimators, such as those arising in classical robust regression, only possess convex curvature over local regions---\emph{even at the population level}. In this paper, we present new theoretical results, based only on \emph{local} curvature assumptions, which may be used to establish statistical and optimization properties of regularized $M$-estimators with highly nonconvex loss functions.

Broadly, we are interested in linear regression estimators that are robust to the following types of deviations:

\begin{itemize}

\item[(a)] \emph{Model misspecification.}  The ordinary least squares objective function may be viewed as a maximum likelihood estimator for linear regression when the additive errors $\epsilon_i$ are normally distributed. It is well known that the $\ell_1$-penalized ordinary least squares estimator is still consistent when the $\epsilon_i$'s are sub-Gaussian~\cite{BicEtal08, Wai09}; however, if the distribution of the $\epsilon_i$'s deviates more wildly from the normal distribution (e.g., the $\epsilon_i$'s are heavy-tailed), the regression estimator based on the least squares loss no longer converges at optimal rates. In addition, whereas the usual regularity assumptions on the design matrix such as the restricted eigenvalue condition have been shown to hold with high probability when the covariates are sub-Gaussian~\cite{RasEtal10, RudZho13}, we wish to devise estimators that are also consistent under weaker assumptions on the distribution of the covariates.

\item[(b)] \emph{Outliers.} Even when the covariates and error terms are normally distributed, the regression estimator may be inconsistent when observations are contaminated by outliers in the predictors and/or response variables~\cite{RouLer05}. Whereas the standard ordinary least squares loss function is non-robust to outliers in the observations, alternative estimators exist in a low-dimensional setting that are robust to a certain degree of contamination. We wish to extend this theory to high-dimensional regression estimators, as well.

\end{itemize}

\noindent Inspired by the classical theory on robust estimators for linear regression~\cite{Hub81, MarEtal06, HamEtal11}, we study regularized versions of low-dimensional robust regression estimators and establish statistical guarantees in a high-dimensional setting. As we will see, the regularized robust regression functions continue to enjoy good behavior in high dimensions, and we can quantify the degree to which the high-dimensional estimators are robust to the types of deviations described above.

Our first main contribution is to provide a general set of sufficient conditions under which optima of regularized robust $M$-estimators are statistically consistent, even in the presence of heavy-tailed errors and outlier contamination. The conditions involve a bound on the derivative of the regression function, as well as restricted strong convexity of the loss function in a neighborhood of constant radius about the true parameter vector, and the conclusions are given in terms of the tails of the error distribution. The notion of restricted strong convexity, as used previously in the literature~\cite{NegEtal12, AgaEtal12, LohWai13, LohWai14}, traditionally involves a global condition on the behavior of the loss function. However, due to the highly nonconvex behavior of the robust regression functions of interest, we assume only a \emph{local} condition of restricted strong convexity in the development of our statistical results. Consequently, our main theorem provides guarantees only for stationary points within the local region of strong curvature. We show that all such local stationary points are statistically consistent estimators for the true regression vector; when the covariates are sub-Gaussian, the rate of convergence agrees (up to a constant factor) with the rate of convergence for $\ell_1$-penalized ordinary least squares regression with sub-Gaussian errors. We also use the same framework to study generalized $M$-estimators and provide results for statistical consistency of local stationary points under weaker distributional assumptions on the covariates.

The wide applicability of our theorem on statistical consistency of high-dimensional robust $M$-estimators opens the door to an important question regarding the design of robust regression estimators, which is the topic of our second contribution: In the setting of heavy-tailed errors, if all regression estimators with bounded derivative are statistically consistent with rates agreeing up to a constant factor, what are the advantages of using a complicated nonconvex regression function over a simple convex function such as the Huber loss? In the low-dimensional setting, several independent lines of work provide reasons for using nonconvex $M$-estimators over their convex alternatives~\cite{Hub81, SheEtal08}. One compelling justification is from the viewpoint of statistical efficiency. Indeed, the log likelihood function of the heavy-tailed $t$-distribution with one degree of freedom gives rise to the nonconvex Cauchy loss, which is consequently asymptotically efficient~\cite{LehCas98}. In our second main theorem, we prove that by using a suitable nonconvex regularizer~\cite{FanLi01, Zha10}, we may guarantee that local stationary points of the regularized robust $M$-estimator agree with a local oracle solution defined on the correct support. Thus, provided the sample size scales sufficiently quickly with the level of sparsity, results on asymptotic normality of low-dimensional $M$-estimators with a diverging number of parameters~\cite{Hub73, YohMar79, Por85, Mam89, HeSha00} may be used to establish asymptotic normality of the corresponding high-dimensional estimators, as well. In particular, when the loss function equals the negative log likelihood of the error distribution, stationary points of the high-dimensional $M$-estimator will also be efficient in an asymptotic sense. Our oracle result and subsequent conclusions regarding asymptotic normality resemble a variety of other results in the literature on nonconvex regularization~\cite{FanPen04, BraEtal11, LiEtal11}, but our result is stronger because it provides guarantees for \emph{all} stationary points in the local region. Our proof technique leverages the primal-dual witness construction recently proposed in Loh and Wainwright~\cite{LohWai14}; however, we require a more refined analysis here in order to extend the result to one involving only local properties of the loss function.

Our third and final contribution addresses algorithms used optimize our proposed $M$-estimators. Since our statistical consistency and oracle results only provide guarantees for the behavior of \emph{local} solutions, we need to devise an optimization algorithm that always converges to a stationary point inside the local region. Indeed, local optima that are statistically inconsistent are the bane of nonconvex $M$-estimators, even in low-dimensional settings~\cite{FreDia82}. To remedy this issue, we propose a novel two-step algorithm that is \emph{guaranteed} to converge to a stationary point within the local region of restricted strong convexity. Our algorithm consists of optimizing two separate regularized $M$-estimators in succession, and may be applied to situations where both the loss and regularizer are nonconvex. In the first step, we optimize a convex regularized $M$-estimator to obtain a sufficiently close point that is then used to initialize an optimization algorithm for the original (nonconvex) $M$-estimator in the second step. We use the composite gradient descent algorithm~\cite{Nes07} in both steps of the algorithm, and prove rigorously that if the initial point in the second step lies within the local region of restricted curvature, all successive iterates will continue to lie in the region and converge at a linear rate to an appropriate stationary point. Any convex, statistically consistent $M$-estimator suffices for the first step; we use the $\ell_1$-penalized Huber loss in our simulations involving sub-Gaussian covariates with heavy-tailed errors, since global optima are statistically consistent by our earlier theory. Our resulting two-step estimator, which first optimizes a convex Huber loss to obtain a consistent estimator and then optimizes a (possibly nonconvex) robust $M$-estimator to obtain a more efficient estimator, is reminiscent of the one-step estimators common in the robust regression literature~\cite{Bic75}---however, here we require full runs of composite gradient descent in each step of the algorithm, rather than a single Newton-Raphson step. Note that if the goal is to optimize an $M$-estimator involving a convex loss and nonconvex regularizer, such as the SCAD-penalized Huber loss, our two-step algorithm is also applicable, where we optimize the $\ell_1$-penalized loss in the first step.


\paragraph{Related work:}

We close this section by highlighting three recent papers on related topics. The analysis in this paper most closely resembles the work of Lozano and Meinshausen~\cite{LozMei13}, in that we study stationary points of nonconvex functions used for robust high-dimensional linear regression within a local neighborhood of the true regression vector. Although the technical tools we use here are similar, we focus on regression functions are expressible as $M$-estimators; the minimum distance loss function proposed in that paper does not fall into this category. In addition, we formalize the notion of basins of attraction for optima of nonconvex $M$-estimators and develop a two-step optimization algorithm that consists of optimizing successive regularized $M$-estimators, which goes beyond their results about local convergence of a composite gradient descent algorithm.

Another related work is that of Fan et al.~\cite{FanEtal14}. While that paper focuses exclusively on developing estimation bounds for penalized robust regression with the Huber loss function, the results presented in our paper are strictly more general, since they hold for \emph{nonconvex} $M$-estimators, as well. The analysis of the $\ell_1$-penalized Huber loss is still relevant to our analysis, however, because as shown below, its global convergence guarantees provide us with a good initialization point for the composite gradient algorithm that we will apply in the first step of our two-step algorithm.

Finally, we draw attention to the recent work by Mendelson~\cite{Men14}. In that paper, careful derivations based on empirical process theory demonstrate the advantage of using differently parametrized convex loss functions tuned according to distributional properties of the additive noise in the model. Our analysis also reveals the impact of different parameter choices for the regression function on the resulting estimator, but the rates of Mendelson~\cite{Men14} are much sharper than ours (albeit agreeing up to a constant factor). However, our analysis is not limited to convex loss functions, and covers nonconvex loss functions possessing local curvature, as well. Finally, note that while Mendelson~\cite{Men14} is primarily concerned with optimizing the regression estimator with respect to $\ell_1$- and $\ell_2$-error, our oracle results suggest that it may be instructive to consider second-order properties as well. Indeed, taking into account attributes such as the variance and asymptotic efficiency of the estimator may lead to a different parameter choice for a robust loss function than if the primary goal is to minimize the bias alone. \\

The remainder of our paper is organized as follows: In Section~\ref{SecBackground}, we provide the basic background concerning $M$- and generalized $M$-estimators, and introduce various robust loss functions and regularizers to be discussed in the sequel. In Section~\ref{SecMain}, we present our main theorem concerning statistical consistency of robust high-dimensional $M$-estimators and unpack the distributional conditions required for the assumptions of the theorem to hold for specific robust estimators through a series of propositions. We also present our main theorem concerning oracle properties of nonconvex regularized $M$-estimators, with a corollary illustrating the types of asymptotic normality conclusions that may be derived from the oracle result. Section~\ref{SecOpt} provides our two-step optimization algorithm and corresponding theoretical guarantees. We conclude in Section~\ref{SecSims} with a variety of simulation results. A brief review of robustness measures is provided in Appendix~\ref{AppRobust}, and proofs of the main theorems and all supporting lemmas and propositions are contained in the remaining supplementary appendices.


\paragraph{Notation:}

For functions $f(n)$ and $g(n)$, we write $f(n) \precsim g(n)$ to mean
that \mbox{$f(n) \le c g(n)$} for some universal constant $c \in (0,
\infty)$, and similarly, $f(n) \succsim g(n)$ when \mbox{$f(n) \ge c'
  g(n)$} for some universal constant $c' \in (0, \infty)$. We write
$f(n) \asymp g(n)$ when $f(n) \precsim g(n)$ and $f(n) \succsim g(n)$
hold simultaneously. For a vector $v \in \real^p$ and a subset $S
\subseteq \{1, \dots, p\}$, we write $v_S \in \real^S$ to denote the
vector $v$ restricted to $S$. For a matrix $M$, we write $\opnorm{M}_2$ to denote the spectral
norm. For a function $h: \real^p \rightarrow \real$, we write $\nabla h$ to denote
a gradient or subgradient of the function.


\section{Background and problem setup}
\label{SecBackground}

In this section, we provide some background on $M$- and generalized $M$-estimators for robust regression. We also describe the classes of nonconvex regularizers that will be covered by our theory.

Throughout, we will assume that we have $n$ i.i.d.\ observations $\{(x_i, y_i)\}_{i=1}^n$ from the linear model
\begin{equation}
\label{EqnLinModel}
y_i = x_i^T \betastar + \epsilon_i, \qquad \forall 1 \le i \le n,
\end{equation}
where $x_i \in \real^p$, $y_i \in \real$, and $\betastar \in \real^p$ is a $k$-sparse vector. We also assume that $x_i \condind \epsilon_i$ and both are zero-mean random variables. We are interested in high-dimensional regression estimators of the form
\begin{equation}
\label{EqnGeneral}
\betahat \in \arg\min_{\|\beta\|_1 \le R} \left\{\Loss_n(\beta) + \rho_\lambda(\beta)\right\},
\end{equation}
where $\Loss_n$ is the empirical loss function and $\rho_\lambda$ is a penalty function. For instance, the Lasso program is given by the loss $\Loss_n(\beta) = \frac{1}{n} \sum_{i=1}^n (x_i^T \beta - y_i)^2$ and penalty $\rho_\lambda(\beta) = \lambda \|\beta\|_1$, but this framework allows for much more general settings. Since we are interested in cases where the loss and regularizer may be nonconvex, we include the side condition $\|\beta\|_1 \le R$ in the program~\eqref{EqnGeneral} in order to guarantee the existence of local/global optima. We will require $R \ge \|\betastar\|_1$, so that the true regression vector $\betastar$ is feasible for the program.

In the scenarios below, we will consider loss functions $\Loss_n$ that satisfy
\begin{equation}
\label{EqnZeroG}
\E\left[\nabla \Loss_n(\betastar)\right] = 0.
\end{equation}
When the population-level loss $\Loss(\beta) \defn \E[\Loss_n(\beta)]$ is a convex function, equation~\eqref{EqnZeroG} implies that $\betastar$ is a global optimum of $\Loss(\beta)$. When $\Loss$ is nonconvex, the condition~\eqref{EqnZeroG} ensures that $\betastar$ is at least a stationary point of the function. Our goal is to develop conditions under which certain stationary points of the program~\eqref{EqnGeneral} are statistically consistent estimators for $\betastar$.


\subsection{Robust $M$-estimators}
\label{SecMEst}

We wish to study loss functions $\Loss_n$ that are robust to outliers and/or model misspecification. Consequently, we borrow our loss functions from the classical theory of robust regression in low dimensions; the additional regularizer $\rho_\lambda$ appearing in the program~\eqref{EqnGeneral} encourages sparsity in the solution and endows it with appealing behavior in high dimensions. Here, we provide a brief review of $M$-estimators used for robust linear regression. For a more detailed treatment of the basic concepts of robust regression, see the books~\cite{Hub81, MarEtal06, HamEtal11} and the many references cited therein.

Let $\ell$ denote the regression function defined on an individual observation pair $(x_i, y_i)$. The corresponding $M$-estimator is then given by
\begin{equation}
\label{EqnLoss}
\Loss_n(\beta) = \frac{1}{n} \sum_{i=1}^n \ell(x_i^T \beta - y_i).
\end{equation}
Note that 
\begin{equation*}
\E\left[\nabla \Loss_n(\betastar)\right] = \E\left[\ell'(x_i^T \betastar - y_i) x_i\right] = \E\left[\ell'(\epsilon_i) x_i\right] = \E\left[\ell'(\epsilon_i)\right] \cdot \E\left[x_i\right] = 0,
\end{equation*}
so the condition~\eqref{EqnZeroG} is always satisfied. In particular, the maximum likelihood estimator corresponds to the choice $\ell(u) = - \log p_\epsilon(u)$, where $p_\epsilon$ is the probability density function of the additive errors $\epsilon_i$. Note that when $\epsilon_i \sim N(0, 1)$, the MLE corresponds to the choice $\ell(u) = \frac{u^2}{2}$, and the resulting loss function is convex. \\

Some of the loss functions that we will analyze in this paper include the following:

\paragraph{Huber loss:} We have
\begin{equation*}
\ell(u) =
\begin{cases}
\frac{u^2}{2}, & \text{if } |u| \le \xi, \\
\xi |u| - \frac{\xi^2}{2}, & \text{if } |u| > \xi.
\end{cases}
\end{equation*}
In this case, $\ell$ is a convex function. Although $\ell''$ does not exist everywhere, $\ell'$ is continuous and $\|\ell'\|_\infty \le \xi$.

\paragraph{Tukey's biweight:} We have
\begin{equation*}
\ell(u) =
\begin{cases}
\frac{\xi^2}{6} \left(1 - \left(1 - \frac{u^2}{\xi^2}\right)^3\right), & \text{if } |u| \le \xi, \\
\frac{\xi^2}{6}, & \text{if } |u| > \xi.
\end{cases}
\end{equation*}
Note that $\ell$ is \emph{nonconvex}. We also compute the first derivative
\begin{equation*}
\ell'(u) =
\begin{cases}
u \left(1 - \frac{u^2}{\xi^2}\right)^2, & \text{if } |u| \le \xi, \\
0, & \text{if } |u| > \xi,
\end{cases}
\end{equation*}
and second derivative
\begin{equation*}
\ell''(u) =
\begin{cases}
\left(1 - \frac{u^2}{\xi^2}\right) \left(1 - \frac{5u^2}{\xi^2}\right), & \text{if } |u| \le \xi, \\
0, & \text{if } |u| > \xi.
\end{cases}
\end{equation*}
Note that $\ell''$ is continuous. Furthermore, $\|\ell'\|_\infty \le \frac{16\xi}{25 \sqrt{5}}$. One may check that Tukey's biweight function is \emph{not} an MLE. Furthermore, although $\ell''$ exists everywhere and is continuous, $\ell'''$ does not exist for $u \in \left\{\pm \xi, \; \pm \frac{\xi}{\sqrt{5}}\right\}$. \\

\paragraph{Cauchy loss:} We have
\begin{equation*}
\ell(u) = \frac{\xi^2}{2} \log\left(1 + \frac{u^2}{\xi^2}\right).
\end{equation*}
Note that $\ell$ is \emph{nonconvex}. When $\xi = 1$, the function $\ell(u)$ is proportional to the MLE for the $t$-distribution with one degree of freedom (a heavy-tailed distribution). This suggests that for heavy-tailed distributions, nonconvex loss functions may be more desirable from the point of view of statistical efficiency, although optimization becomes more difficult; we will explore this idea more fully in Section~\ref{SecVariance} below. For the Cauchy loss, we have
\begin{equation*}
\ell'(u) = \frac{u}{1 + u^2/\xi^2}, \qquad \text{and} \qquad \ell''(u) = \frac{1 - u^2/\xi^2}{(1 + u^2/\xi^2)^2}.
\end{equation*}
In particular, $|\ell'(u)|$ is maximized when $u^2 = \xi^2$, so $\|\ell'\|_\infty \le \frac{\xi}{2}$. We may also check that $\|\ell''\|_\infty \le 1$ and $\|\ell'''\|_\infty \le \frac{3}{2\xi}$. \\

Although second and third derivatives do not always exist for the loss functions above, a unifying property is that the derivative $\ell'$ is \emph{bounded} in each case. This turns out to be an important property for robustness of the resulting estimator. Intuitively, we may view a solution $\betahat$ of the program~\eqref{EqnGeneral} as an approximate sparse solution to the estimating equation $\nabla \Loss_n(\beta) = 0$, or equivalently,
\begin{equation}
\label{EqnEstimating}
\frac{1}{n} \sum_{i=1}^n \ell'(x_i^T \beta - y_i) x_i = 0.
\end{equation}
When $\beta = \betastar$, equation~\eqref{EqnEstimating} becomes
\begin{equation}
\label{EqnEst}
\frac{1}{n} \sum_{i=1}^n \ell'(\epsilon_i) x_i = 0.
\end{equation}
In particular, if a pair $(x_i, y_i)$ satisfies the linear model~\eqref{EqnLinModel} but $\epsilon_i$ is an outlier, its contribution to the sum in equation~\eqref{EqnEst} is bounded when $\ell'$ is bounded, lessening the contamination effect of gross outliers.

In the robust regression literature, a \emph{redescending $M$-estimator} has the additional property that there exists $\xi_0 > 0$ such that $|\ell'(u)| = 0$, for all $|u| \ge \xi_0$. Then $\xi_0$ is known as a \emph{finite rejection point}, since outliers $(x_i, y_i)$ with $|\epsilon_i| \ge \xi_0$ will be completely eliminated from the summand in equation~\eqref{EqnEst}. For instance, Tukey's biweight function gives rise to a redescending $M$-estimator.\footnote{The Cauchy loss has the property that $\lim_{u \rightarrow \infty} |\ell'(u)| = 0$, but it is not redescending for any finite $\xi_0$.} Note that redescending $M$-estimators will always be nonconvex, so computational efficiency will be sacrificed at the expense of finite rejection properties. For an in-depth discussion of redescending $M$-estimators vis-\`{a}-vis different measures of robustness, see the article by Shevlyakov et al.~\cite{SheEtal08}.


\subsection{Generalized $M$-estimators}
\label{SecGMEst}

Whereas the $M$-estimators described in Section~\ref{SecMEst} are robust with respect to outliers in the additive noise terms $\epsilon_i$, they are non-robust to outliers in the covariates $x_i$. This may be quantified using the concept of influence functions (see Appendix~\ref{AppRobust}). Intuitively, an outlier in $x_i$ may cause the corresponding term in equation~\eqref{EqnEst} to behave arbitrarily badly. This motivates the use of \emph{generalized} $M$-estimators that downweight large values of $x_i$ (also known as leverage points). The resulting estimating equation is then defined as follows:
\begin{equation}
\label{EqnGeneralizedM}
\sum_{i=1}^n \eta(x_i, x_i^T \beta - y_i) x_i = 0,
\end{equation}
where $\eta: \real^p \times \real \rightarrow \real$ is defined appropriately. As will be discussed in the sequel, generalized $M$-estimators may allow us to relax the distributional assumptions on the covariates; e.g., from sub-Gaussian to sub-exponential.

We will focus on functions $\eta$ that take the form
\begin{equation}
\label{EqnDecompGM}
\eta(x_i, r_i) = w(x_i) \; \ell'(r_i \cdot v(x_i)),
\end{equation}
where $w, v > 0$ are weighting functions. Note that the $M$-estimators considered in Section~\ref{SecMEst} may also be written in this form, where $w \equiv v \equiv 1$. \\

Some popular choices of $\eta$ of the form presented in equation~\eqref{EqnDecompGM} include the following:

\paragraph{Mallows estimator~\cite{Mal75}:} We take $v(x) \equiv 1$ and $w(x)$ of the form
\begin{equation}
\label{EqnWDefn}
w(x) = \min\left\{1, \frac{b}{\|Bx\|_2}\right\}, \qquad \text{or} \qquad w(x) = \min\left\{1, \frac{b^2}{\|Bx\|_2^2}\right\},
\end{equation} 
for parameters $b > 0$ and $B \in \real^{p \times p}$. Note that $\|w(x)x\|_2$ is indeed bounded for a fixed choice of $b$ and $B$, since
\begin{equation*}
\|w(x) x\|_2 \le \frac{\|bx\|_2}{\|Bx\|_2} \le b \opnorm{B^{-1}}_2 \defn b_0.
\end{equation*}
The Mallows estimator effectively shrinks data points for which $\|x_i\|_2$ is large toward an elliptical shell defined by $B$, and likewise pushes small data points closer to the shell. \\

\paragraph{Hill-Ryan estimator~\cite{Hil77}:} We take $v(x) = w(x)$, where $w$ is defined such that $\|w(x)x\|_2$ is bounded (e.g., equation~\eqref{EqnWDefn}). In addition to downweighting the influence function similarly to the Mallows estimator, the Hill-Ryan estimator scales the residuals according to the leverage weight of the $x_i$'s. \\

\paragraph{Schweppe estimator~\cite{MerSch71}:} For a parameter $B \in \real^{p \times p}$, we take $w(x) = \frac{1}{\|Bx\|_2}$ and $v(x) = \frac{1}{w(x)}$. Like the Mallows estimator, Schweppe's estimator downweights the contribution of data points with high leverage as a summand in the estimating equation~\eqref{EqnGeneralizedM}. If $\ell'$ is locally linear around the origin and flattens out for larger values, Schweppe's estimator additionally dampens the effect of a residual $r_i$ only when it is large compared to the leverage of $x_i$. As discussed in Hampel et al.~\cite{HamEtal11}, Schweppe's estimator is designed to be optimal in terms of a measure of variance robustness, subject to a bound on the influence function. \\

Note that when $\eta$ takes the form in equation~\eqref{EqnDecompGM}, the estimating equation~\eqref{EqnGeneralizedM} may again be seen as a zero-gradient condition $\nabla \Loss_n(\beta) = 0$, where
\begin{equation}
\label{EqnLossWeight}
\Loss_n(\beta) \defn \frac{1}{n} \sum_{i=1}^n \frac{w(x_i)}{v(x_i)} \; \ell\left((x_i^T \beta - y_i)v(x_i)\right).
\end{equation}
Under reasonable conditions, such as oddness of $\ell'$ and symmetry of the error distribution, the condition~\eqref{EqnZeroG} may be seen to hold (cf.\ condition 2 of Proposition~\ref{PropGradBound} below and the following remark). The overall program for a generalized $M$-estimator then takes the form
\begin{equation*}
\betahat \in \arg\min_{\|\beta\|_1 \le R} \left\{\frac{1}{n} \sum_{i=1}^n \frac{w(x_i)}{v(x_i)} \; \ell\left((x_i^T \beta - y_i)v(x_i)\right) + \rho_\lambda(\beta) \right\}.
\end{equation*}


\subsection{Nonconvex regularizers}

Finally, we provide some background on the types of regularizers we will use in our analysis of the composite objective function~\eqref{EqnGeneral}. Following the theoretical development of Loh and Wainwright~\cite{LohWai13, LohWai14}, we require the regularizer $\rho_\lambda$ to satisfy the following properties:

\begin{assumption} [Amenable regularizers]
\label{AsAmenable}
The regularizer is coordinate-separable:
\begin{equation*}
\rho_\lambda(\beta) = \sum_{j=1}^p \rho_\lambda(\beta_j),
\end{equation*}
for some scalar function $\rho_\lambda: \real \mapsto \real$. In addition:
\begin{itemize}
\item[(i)] The function $t \mapsto \rho_\lambda(t)$ is symmetric around zero and $\rho_\lambda(0) = 0$.
\item[(ii)] The function $t \mapsto \rho_\lambda(t)$ is nondecreasing on $\real^+$.
\item[(iii)] The function $t \mapsto \frac{\rho_\lambda(t)}{t}$ is nonincreasing on $\real^+$.
\item[(iv)] The function $t \mapsto \rho_\lambda(t)$ is differentiable for $t \neq 0$.
\item[(v)] $\lim_{t \rightarrow 0^+} \rho'_\lambda(t) = \lambda$.
\item[(vi)] There exists $\mu > 0$ such that the function $t \mapsto \rho_\lambda(t) + \frac{\mu}{2} t^2$ is convex.
\item[(vii)] There exists $\gamma \in (0, \infty)$ such that $\rho_\lambda'(t) = 0$ for all $t \ge \gamma \lambda$.
\end{itemize}
\end{assumption}
If $\rho_\lambda$ satisfies conditions (i)--(vi) of Assumption~\ref{AsAmenable}, we say that $\rho_\lambda$ is $\mu$-\emph{amenable}. If $\rho_\lambda$ also satisfies condition (vii), we say that $\rho_\lambda$ is $(\mu, \gamma)$-\emph{amenable}~\cite{LohWai14}. In particular, if $\rho_\lambda$ is $\mu$-amenable, then $q_\lambda(t) \defn \lambda |t| - \rho_\lambda(t)$ is everywhere differentiable. Defining the vector version $q_\lambda: \real^p \rightarrow \real$ accordingly, it is easy to see that $\frac{\mu}{2} \|\beta\|_2^2 - q_\lambda(\beta)$ is convex. \\

Some examples of amenable regularizers are the following:

\paragraph{Smoothly clipped absolute deviation (SCAD) penalty:}  This 
penalty, due to Fan and Li~\cite{FanLi01}, takes the form
\begin{align}
\label{EqnSCADdefn}
\rho_\lambda(t) & \defn \begin{cases} \lambda |t|, & \mbox{for $|t|
    \le \lambda$,} \\
-\frac{t^2 - 2a\lambda |t| + \lambda^2}{2(a-1)}, & \mbox{for $\lambda <
  |t| \le a \lambda$,} \\
\frac{(a+1)\lambda^2}{2}, & \mbox{for $|t| > a\lambda$},
\end{cases}
\end{align}
where $a > 2$ is fixed. The SCAD penalty is $(\mu, \gamma)$-amenable, with $\mu =
\frac{1}{a-1}$ and $\gamma = a$. \\

\paragraph{Minimax concave penalty (MCP):} This penalty, due to 
Zhang~\cite{Zha12}, takes the form
\begin{align}
\label{EqnMCPdefn}
\rho_\lambda(t) & \defn \sign(t) \, \lambda \cdot \int_0^{|t|} \left(1
- \frac{z}{\lambda b}\right)_+ dz,
\end{align}
where $b > 0$ is fixed. The MCP regularizer is $(\mu,
\gamma)$-amenable, with $\mu = \frac{1}{b}$ and $\gamma =
b$. \\

\paragraph{Standard $\ell_1$-penalty:}  The $\ell_1$-penalty $\rho_\lambda(t) = \lambda |t|$ is an example of a regularizer that is $0$-amenable, but not
$(0, \gamma)$-amenable, for any $\gamma < \infty$. \\

As studied in detail in Loh and Wainwright~\cite{LohWai14} and leveraged in the results of Section~\ref{SecVariance} below, using $(\mu, \gamma)$-amenable regularizers allows us to derive a powerful oracle result concerning local stationary points, which will be useful for our discussion of asymptotic normality.


\section{Main statistical results}
\label{SecMain}

We now present our core statistical results concerning stationary points of the high-dimensional robust $M$-estimators described in Section~\ref{SecBackground}. We begin with a general deterministic result that ensures statistical consistency of stationary points of the program~\eqref{EqnGeneral} when the loss function satisfies restricted strong convexity and the regularizer is $\mu$-amenable. Next, we interpret the consequences of our theorem for specific $M$-estimators and generalized $M$-estimators through a series of propositions, and provide conditions on the distributions of the covariates and error terms in order for the assumptions of the theorem to hold with high probability. Lastly, we provide a theorem establishing that stationary points are equal to a local oracle estimator when the regularizer is nonconvex and $(\mu, \gamma)$-amenable.

Recall that $\betatil$ is a \emph{stationary point} of the program~\eqref{EqnGeneral} if
\begin{equation*}
\inprod{\nabla \Loss_n(\betatil) + \nabla \rho_\lambda(\betatil)}{\beta - \betatil} \ge 0,
\end{equation*}
for all feasible $\beta$, where with a slight abuse of notation, we write $\nabla \rho_\lambda(\betatil) = \lambda \sign(\betatil) - \nabla q_\lambda(\betatil)$ (recall that $q_\lambda$ is differentiable by our assumptions). In particular, the set of stationary points includes all local and global minima, as well as interior local maxima~\cite{Ber99, Clarke83}.


\subsection{General statistical theory}
\label{SecStat}

We require the loss function $\Loss_n$ to satisfy the following local RSC condition:

\begin{assumption} [RSC condition]
\label{AsLocalRSC}
There exist $\alpha, \tau > 0$ and a radius $r > 0$ such that
\begin{equation}
\label{EqnLocalRSC}
\inprod{\nabla \Loss_n(\beta_1) - \nabla \Loss_n(\beta_2)}{\beta_1 - \beta_2} \ge \alpha \|\beta_1 - \beta_2\|_2^2 - \tau \frac{\log p}{n} \|\beta_1 - \beta_2\|_1^2,
\end{equation}
for all $\beta_1, \beta_2 \in \real^p$ such that $\|\beta_1 - \betastar\|_2, \|\beta_2 - \betastar\|_2 \le r$.
\end{assumption}
Note that the condition~\eqref{EqnLocalRSC} imposes \emph{no} conditions on the behavior of $\Loss_n$ outside the ball of radius $r$ centered at $\betastar$. In this way, it differs from the RSC condition used in Loh and Wainwright~\cite{LohWai13}, where a weaker inequality is assumed to hold for vectors outside the local region. This paper focuses on the local behavior of stationary points around $\betastar$, since the loss functions used for robust regression may be more wildly nonconvex away from the origin. As discussed in more detail below, we will take $r$ to scale as a constant independent of $n, p$, and $k$. The ball of radius $r$ essentially cuts out a local basin of attraction around $\betastar$ in which stationary points of the $M$-estimator are well-behaved. Furthermore, our optimization results in Section~\ref{SecOpt} guarantee that we may efficiently locate stationary points within this constant-radius region via a two-step $M$-estimator. \\

We have the following main result, which requires the regularizer and loss function to satisfy the conditions of Assumptions~\ref{AsAmenable} and~\ref{AsLocalRSC}, respectively. The theorem guarantees that stationary points within the local region where the loss function satisfies restricted strong convexity are statistically consistent.

\begin{thm*}
\label{ThmStationary}
Suppose $\Loss_n$ satisfies the RSC condition~\eqref{EqnLocalRSC} with $\beta_2 = \betastar$ and $\rho_\lambda$ is $\mu$-amenable, with $\frac{3}{4} \mu < \alpha$. Suppose $n \ge Cr^2 \cdot k \log p$ and $R \ge \|\betastar\|_1$ and
\begin{equation}
\label{EqnGradBound}
\lambda \ge \max\left\{4 \|\nabla \Loss_n(\betastar)\|_\infty, \; 8\tau R\frac{\log p}{n}\right\}.
\end{equation}
Let $\betatil$ be a stationary point of the program~\eqref{EqnGeneral} such that $\|\betatil - \betastar\|_2 \le r$. Then $\betatil$ exists and satisfies the bounds
\begin{equation*}
\|\betatil - \betastar\|_2 \le \frac{24 \lambda \sqrt{k}}{4\alpha - 3\mu}, \qquad \text{and} \qquad \|\betatil - \betastar\|_1 \le \frac{96 \lambda k}{4\alpha - 3\mu}.
\end{equation*}
\end{thm*}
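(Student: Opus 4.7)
I plan to run the standard localized basic-inequality argument, adapted to the $\mu$-amenable regularizer, plus a Weierstrass argument to supply existence. Let $\nu \defn \betatil - \betastar$ and $S \defn \supp(\betastar)$, so $|S| \le k$. Since $\betastar$ is feasible, first-order stationarity at the test point $\betastar$ reads $\inprod{\nabla \Loss_n(\betatil) + \nabla \rho_\lambda(\betatil)}{-\nu} \ge 0$. I would split $\rho_\lambda = \lambda \|\cdot\|_1 - q_\lambda$ and exploit two consequences of $\mu$-amenability: the subgradient inequality for $\|\cdot\|_1$ giving $\|\betastar\|_1 - \|\betatil\|_1 \le \|\nu_S\|_1 - \|\nu_{S^c}\|_1$, and the convexity of $\frac{\mu}{2}\|\cdot\|_2^2 - q_\lambda$, which yields the one-sided monotonicity $\inprod{\nabla q_\lambda(\betatil) - \nabla q_\lambda(\betastar)}{\nu} \le \mu \|\nu\|_2^2$ together with the matching quadratic upper bound $q_\lambda(\betatil) - q_\lambda(\betastar) \le \inprod{\nabla q_\lambda(\betastar)}{\nu} + \frac{\mu}{2}\|\nu\|_2^2$. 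Using that $\rho_\lambda'(0^+) = \lambda$ implies $\nabla q_\lambda(\betastar)_{S^c} = 0$, together with the coordinate-wise bound $|\nabla q_\lambda| \le \lambda$, one gets $\inprod{\nabla q_\lambda(\betastar)}{\nu} \le \lambda \|\nu_S\|_1$. Packaging these pieces yields a bound of the form
\begin{equation*}
\inprod{\nabla \Loss_n(\betatil)}{\nu} \;\le\; 2\lambda \|\nu_S\|_1 - \lambda \|\nu_{S^c}\|_1 + \tfrac{3\mu}{4}\|\nu\|_2^2,
\end{equation*}
where the sharpened coefficient $\tfrac{3\mu}{4}$ (rather than $\mu$) requires a careful split between the two amenability corrections, for instance by also invoking the coordinate-wise lower bound $\rho_\lambda(t) \ge \lambda |t| - \tfrac{\mu}{2} t^2$ on the off-support coordinates.

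Next I apply the local RSC condition to the pair $(\betatil, \betastar)$, which is legitimate because $\|\betatil - \betastar\|_2 \le r$, and use H\"older together with $\lambda \ge 4 \|\nabla \Loss_n(\betastar)\|_\infty$ to control $|\inprod{\nabla \Loss_n(\betastar)}{\nu}| \le \tfrac{\lambda}{4}\|\nu\|_1$. Combining with the previous display gives
\begin{equation*}
\bigl(\alpha - \tfrac{3\mu}{4}\bigr)\|\nu\|_2^2 \;\le\; \tfrac{9\lambda}{4}\|\nu_S\|_1 - \tfrac{3\lambda}{4}\|\nu_{S^c}\|_1 + \tau\,\tfrac{\log p}{n}\|\nu\|_1^2.
\end{equation*}
The quadratic RSC slack is absorbed via feasibility: $\|\nu\|_1 \le \|\betatil\|_1 + \|\betastar\|_1 \le 2R$, combined with $\lambda \ge 8\tau R \log p/n$, yields $\tau \tfrac{\log p}{n}\|\nu\|_1^2 \le \tfrac{\lambda}{4}\|\nu\|_1$. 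Splitting this term over $S$ and $S^c$ and invoking nonnegativity of the LHS extracts a cone condition $\|\nu_{S^c}\|_1 \le c \|\nu_S\|_1$ for an absolute constant $c$. Sparsity then gives $\|\nu_S\|_1 \le \sqrt{k}\, \|\nu\|_2$, and dividing through by $\|\nu\|_2$ yields the stated $\ell_2$-bound; the $\ell_1$-bound follows from the cone condition plus one more application of $\|\nu_S\|_1 \le \sqrt{k}\,\|\nu\|_2$.

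For existence, I would minimize $\Loss_n + \rho_\lambda$ over the compact set $\{\beta : \|\beta\|_1 \le R\} \cap \Ball_2(\betastar, r)$, obtaining a minimizer by Weierstrass that satisfies the basic inequality above. The sample-size hypothesis $n \ge C r^2 k \log p$ forces the resulting $\ell_2$-error bound to be strictly less than $r$, so the $\ell_2$-radius constraint is inactive at the minimizer and it is a genuine stationary point of~\eqref{EqnGeneral}. The main difficulty is the constant tracking in the amenable-regularizer step --- balancing the two distinct $\mu$-corrections to produce the exact factor $\tfrac{3\mu}{4}$ --- along with the localization discipline of checking at each step that the vectors to which RSC and the amenability inequalities are applied both sit inside the radius-$r$ ball.
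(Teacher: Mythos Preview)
Your overall architecture---stationarity, the amenability bounds on $q_\lambda$, local RSC, H\"older on the gradient, absorption of the $\tau$-term via $\|\nutil\|_1 \le 2R$, then a cone condition and the compactness argument for existence---matches the paper's exactly. The difference is in how the penalty term is handled, and this is precisely where your sketch is incomplete.

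Your proposed route (split $\rho_\lambda = \lambda\|\cdot\|_1 - q_\lambda$, use $\|\betastar\|_1 - \|\betatil\|_1 \le \|\nu_S\|_1 - \|\nu_{S^c}\|_1$, and control $\inprod{\nabla q_\lambda(\betatil)}{\nu}$ via monotonicity plus $\nabla q_\lambda(\betastar)_{S^c}=0$) cleanly delivers
\[
\inprod{\nabla \Loss_n(\betatil)}{\nu} \;\le\; 2\lambda\|\nu_S\|_1 - \lambda\|\nu_{S^c}\|_1 + \mu\|\nu\|_2^2,
\]
with coefficient $\mu$, not $\tfrac{3\mu}{4}$. Your suggested sharpening---applying $\rho_\lambda(t)\ge \lambda|t|-\tfrac{\mu}{2}t^2$ on $S^c$---does not recover the missing $\tfrac{\mu}{4}$: working coordinate-wise on $S^c$ you get at best $q_\lambda'(\nu_j)\nu_j \le \mu\nu_j^2$ or $q_\lambda(\nu_j)\le \tfrac{\mu}{2}\nu_j^2$, and either way the total curvature cost comes back to $\mu\|\nu\|_2^2$ once you combine the $S$ and $S^c$ pieces. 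So your decomposition proves a theorem with denominator $4\alpha-4\mu$ (and the stronger hypothesis $\mu<\alpha$), not the stated $4\alpha-3\mu$.

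The paper obtains $\tfrac{3\mu}{4}$ by a different mechanism. It does \emph{not} immediately decompose into $\|\nu_S\|_1,\|\nu_{S^c}\|_1$; instead it keeps the penalty as $\rho_\lambda(\betastar)-\rho_\lambda(\betatil)$, using the convexity of $\tfrac{\mu}{2}\|\cdot\|_2^2 - q_\lambda$ linearized at $\betatil$ to get a single $\tfrac{\mu}{2}\|\nu\|_2^2$ cost. After absorbing the gradient and $\tau$-terms into $\tfrac{\lambda}{2}\|\nu\|_1$, it applies the amenability bound $\lambda|t|\le \rho_\lambda(t)+\tfrac{\mu}{2}t^2$ a \emph{second} time to convert $\tfrac{\lambda}{2}\|\nu\|_1$ into $\tfrac{1}{2}\rho_\lambda(\nu)+\tfrac{\mu}{4}\|\nu\|_2^2$ (this is where the extra $\tfrac{\mu}{4}$ enters), then uses subadditivity $\rho_\lambda(\nu)\le\rho_\lambda(\betastar)+\rho_\lambda(\betatil)$ to reach $0\le \tfrac{3}{2}\rho_\lambda(\betastar)-\tfrac{1}{2}\rho_\lambda(\betatil)$. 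Finally, it invokes Lemma~5 of Loh and Wainwright~\cite{LohWai13} to convert $3\rho_\lambda(\betastar)-\rho_\lambda(\betatil)$ into $\lambda(3\|\nu_A\|_1-\|\nu_{A^c}\|_1)$, where $A$ indexes the $k$ \emph{largest} coordinates of $\nu$ (not $\supp(\betastar)$). The cone condition and the constants $24$ and $96$ then follow directly. Your existence argument via Weierstrass and the sample-size condition is exactly what the paper does.
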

\noindent The proof of Theorem~\ref{ThmStationary} is contained in Section~\ref{SecThmStationary}. Note that the statement of Theorem~\ref{ThmStationary} is entirely deterministic, and the distributional properties of the covariates and error terms in the linear model come into play in verifying that the inequality~\eqref{EqnGradBound} and the RSC condition~\eqref{EqnLocalRSC} hold with high probability under the prescribed sample size scaling.

\paragraph{Remark:} Although Theorem~\ref{ThmStationary} only guarantees the statistical consistency of stationary points within the local region of radius $r$, it is essentially the strongest conclusion one can draw based on the local RSC assumption~\eqref{EqnLocalRSC} alone. The power of Theorem~\ref{ThmStationary} lies in the fact that when $r$ is chosen to be a constant and $\frac{n}{k \log p} = o(1)$, as is the case in our robust regression settings of interest, all stationary points within the constant-radius region are actually guaranteed to fall within a shrinking ball of radius $\order\left(\sqrt{\frac{k \log p}{n}}\right)$ centered around $\betastar$. Hence, the stationary points in the local region are statistically consistent at the usual minimax rate expected for $\ell_1$-penalized ordinary least squares regression with sub-Gaussian data. As we will illustrate in more detail in the next section, if robust loss functions with bounded derivatives are used in place of the ordinary least squares loss, the statistical consistency conclusion of Theorem~\ref{ThmStationary} still holds even when the additive errors follow a heavy-tailed distribution or are contaminated by outliers.


\subsection{Establishing sufficient conditions}
\label{SecRSC}

From Theorem~\ref{ThmStationary}, we see that the key ingredients for statistical consistency of local stationary points are (i) the boundedness of $\|\nabla \Loss_n(\betastar)\|_\infty$ in inequality~\eqref{EqnGradBound}, which ultimately dictates the $\ell_2$-rate of convergence of $\betatil$ to $\betastar$ up to a factor of $\sqrt{k}$, and (ii) the local RSC condition~\eqref{EqnLocalRSC} in Assumption~\ref{AsLocalRSC}. We provide more interpretable sufficient conditions in this section via a series of propositions.

For the results of this section, we will require some boundedness conditions on the derivatives of the loss function $\ell$, which we state in the following assumption:
\begin{assumption}
\label{AsEllDeriv}
Suppose there exist $\kappa_1, \kappa_2 \ge 0$ such that
\begin{align}
\label{EqnKappa1}
|\ell'(u)| \le \kappa_1, & \qquad \forall u, \\
\label{EqnKappa2}
\ell''(u) \ge - \kappa_2, & \qquad \forall u.
\end{align}
\end{assumption}
Note that the bounded derivative assumption~\eqref{EqnKappa1} holds for all the robust loss functions highlighted in Section~\ref{SecBackground} (but \emph{not} for the ordinary least squares loss), and $\kappa_1 \asymp \xi$ in each of those cases. Furthermore, inequality~\eqref{EqnKappa2} holds with $\kappa_2 = 0$ when $\ell$ is convex and twice-differentiable, but the inequality also holds for nonconvex losses such as the Tukey and Cauchy loss with $\kappa_2 > 0$. By a more careful argument, we may eschew the condition~\eqref{EqnKappa2} if $\ell$ is a convex function that is in $C^1$ but not $C^2$, as in the case of the Huber loss, since Theorem~\ref{ThmStationary} only requires first-order differentiability of $\Loss_n$ and $q_\lambda$; however, we state the propositions with Assumption~\ref{AsEllDeriv} for the sake of simplicity.

We have the following proposition, which establishes the gradient bound~\eqref{EqnGradBound} with high probability under fairly mild assumptions:
\begin{prop*}
\label{PropGradBound}
Suppose $\ell$ satisfies the bounded derivative condition~\eqref{EqnKappa1} and the following conditions also hold:
\begin{itemize}
\item[(1)] $w(x_i) x_i$ is sub-Gaussian with parameter $\sigma_w^2$.
\item[(2)] Either
\begin{itemize}
\item[(a)] $v(x_i) = 1$ and $\E[w(x_i) x_i] = 0$, or
\item[(b)] $\E\left[\ell'\left(\epsilon_i \cdot v(x_i)\right) \mid x_i\right] = 0$.
\end{itemize}
\end{itemize}
With probability at least $1 - c_1 \exp(-c_2 \log p)$, the loss function defined by equation~\eqref{EqnLossWeight} satisfies the bound
\begin{equation*}
\|\nabla \Loss_n(\betastar)\|_\infty \le c \kappa_1 \sigma_w \sqrt{\frac{\log p}{n}}.
\end{equation*}
\end{prop*}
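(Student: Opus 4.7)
The plan is to show that each coordinate of $\nabla\Loss_n(\betastar)$ is an average of i.i.d.\ mean-zero sub-Gaussian random variables with parameter of order $\kappa_1^2 \sigma_w^2$, and then to apply a standard Hoeffding-type tail bound together with a union bound over the $p$ coordinates.

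First I would write out the gradient explicitly. Differentiating the loss~\eqref{EqnLossWeight} gives
\begin{equation*}
\nabla \Loss_n(\betastar) \;=\; \frac{1}{n}\sum_{i=1}^n w(x_i)\,\ell'\bigl((x_i^T\betastar - y_i)v(x_i)\bigr)\,x_i \;=\; -\frac{1}{n}\sum_{i=1}^n w(x_i)\,\ell'\bigl(\epsilon_i v(x_i)\bigr)\,x_i,
\end{equation*}
using $y_i = x_i^T\betastar + \epsilon_i$ (and absorbing the sign into $\ell'$). Fix a coordinate $j \in \{1,\dots,p\}$ and set $Z_i \defn w(x_i)\,\ell'(\epsilon_i v(x_i))\,x_{ij}$. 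The $Z_i$'s are i.i.d., and I would verify $\E[Z_i] = 0$ under either of the two cases: under (2a), $v\equiv 1$ forces $Z_i = w(x_i)\ell'(\epsilon_i)x_{ij}$, and by independence of $\epsilon_i$ and $x_i$ together with $\E[w(x_i)x_i] = 0$, we get $\E[Z_i] = \E[\ell'(\epsilon_i)]\cdot \E[w(x_i)x_{ij}] = 0$; under (2b), conditioning on $x_i$ gives $\E[Z_i \mid x_i] = w(x_i)x_{ij}\,\E[\ell'(\epsilon_i v(x_i))\mid x_i] = 0$, hence $\E[Z_i] = 0$.

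Next I would control the tails of $Z_i$. By the bounded-derivative hypothesis~\eqref{EqnKappa1} we have $|\ell'(\epsilon_i v(x_i))| \le \kappa_1$ almost surely, so $|Z_i| \le \kappa_1\,|w(x_i)x_{ij}|$. Since $w(x_i)x_i$ is sub-Gaussian with parameter $\sigma_w^2$, its $j$th coordinate $w(x_i)x_{ij}$ is sub-Gaussian with the same parameter, and therefore $Z_i$ is sub-Gaussian with parameter $\kappa_1^2 \sigma_w^2$ (multiplying a sub-Gaussian random variable by an almost-surely bounded factor preserves the sub-Gaussian property with the parameter scaled by the square of the bound). A standard Hoeffding bound for averages of independent sub-Gaussians then yields
\begin{equation*}
\mprob\Bigl( \bigl|[\nabla\Loss_n(\betastar)]_j\bigr| \ge t \Bigr) \;\le\; 2\exp\!\left( -\frac{c\, n t^2}{\kappa_1^2 \sigma_w^2}\right),
\end{equation*}
for a universal constant $c > 0$ and every $t > 0$.

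Finally I would take a union bound over the $p$ coordinates with the choice $t = c'\kappa_1 \sigma_w \sqrt{\log p /n}$ for a sufficiently large constant $c'$; this gives the claimed bound with probability at least $1 - c_1\exp(-c_2 \log p)$. The proof is mostly mechanical once the centering is verified, so I expect the only genuine bookkeeping issue to be the mean-zero check for both cases (2a) and (2b), which turns on separating the $\ell'$ factor from $w(x_i)x_{ij}$ via independence or conditioning; the concentration step itself is routine given that the bounded derivative immediately converts the sub-Gaussianity of $w(x_i)x_i$ into sub-Gaussianity of $Z_i$.
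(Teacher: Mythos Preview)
Your proposal is correct and follows essentially the same approach as the paper: write out the gradient, verify the mean-zero property via conditioning under each of cases (2a) and (2b), use the boundedness of $\ell'$ together with sub-Gaussianity of $w(x_i)x_i$ to get sub-Gaussian summands, and finish with Hoeffding plus a union bound over the $p$ coordinates. If anything, your treatment of the tail bound is slightly cleaner than the paper's phrasing (``products of sub-Gaussians''), since you explicitly use that a bounded multiple of a sub-Gaussian is sub-Gaussian rather than merely sub-exponential.
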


\noindent The proof of Proposition~\ref{PropGradBound} is a simple but important application of sub-Gaussian tail bounds and is provided in Appendix~\ref{AppPropGradBound}. 

\paragraph{Remark:}

Note that for the unweighted $M$-estimator~\eqref{EqnLoss}, conditions (1) and (2a) of Proposition~\ref{PropGradBound} hold when $x_i$ is sub-Gaussian and $\E[x_i] = 0$. If the $x_i$'s are not sub-Gaussian, condition (1) nonetheless holds whenever $w(x_i) x_i$ is bounded. Furthermore, condition (2b) holds whenever $\epsilon_i$ has a symmetric distribution and $\ell'$ is an odd function. We further highlight the fact that aside from a possible mild requirement of symmetry, the concentration result given in Proposition~\ref{PropGradBound} is \emph{independent of the distribution of $\epsilon_i$}, and holds equally well for heavy-tailed error distributions. The distributional effect of the $x_i$'s is captured in the sub-Gaussian parameter $\sigma_w$; in settings where the contaminated data still follow a sub-Gaussian distribution, but the sub-Gaussian parameter is inflated due to large leverage points, using a weight function as defined in equation~\eqref{EqnWDefn} may lead to a significant decrease in the value of $\sigma_w$. This decreases the finite-sample bias of the overall estimator. \\

Establishing the local RSC condition in Assumption~\ref{AsLocalRSC} is more subtle, and the propositions described below depend in a more complex fashion on the distribution of the $\epsilon_i$'s. As noted above, the statistical consistency result in Theorem~\ref{ThmStationary} only requires Assumption~\ref{AsLocalRSC} to hold when $\beta_2 = \betastar$. However, for the stronger oracle result of Theorem~\ref{ThmOracle}, we will require the full form of Assumption~\ref{AsLocalRSC} to hold over all pairs $(\beta_1, \beta_2)$ in the local region. We will quantify the parameters of the RSC condition in terms of an additional parameter $T > 0$, which is treated as a fixed constant. Define the tail probability
\begin{equation}
\label{EqnEpsilonTau}
\epsilon_T \defn \mprob\left(|\epsilon_i| \ge \frac{T}{2}\right),
\end{equation}
and the lower-curvature bound
\begin{equation}
\label{EqnAlphaTau}
\alpha_T \defn \min_{|u| \le T} \ell''(u) > 0,
\end{equation}
where $\ell''$ is assumed to exist on the interval $[-T, T]$. We assume that $T$ is chosen small enough so that $\alpha_T > 0$.

We first consider the case where the loss function takes the usual form of an unweighted $M$-estimator~\eqref{EqnLoss}. We have the following proposition, proved in Appendix~\ref{AppPropMRSC}:
\begin{prop*}
\label{PropMRSC}
Suppose the $x_i$'s are drawn from a sub-Gaussian distribution with parameter $\sigma_x^2$ and the loss function is defined by equation~\eqref{EqnLoss}. Also suppose the bound
\begin{equation}
\label{EqnTricky}
c \sigma_x^2 \left( \epsilon_T^{1/2} + \exp\left(- \frac{c' T^2}{\sigma_x^2 r^2}\right)\right) \le \frac{\alpha_T}{\alpha_T + \kappa_2} \cdot \frac{\lambda_{\min}(\Sigma_x)}{2}
\end{equation}
holds. Suppose $\ell$ satisfies Assumption~\ref{AsEllDeriv}, and suppose the sample size satisfies $n \ge c_0 k \log p$. With probability at least $1 - c \exp(-c' \log p)$, the loss function $\Loss_n$ satisfies Assumption~\ref{AsLocalRSC} with
\begin{equation*}
\alpha = \alpha_T \cdot \frac{\lambda_{\min}(\Sigma_x)}{16}, \qquad \text{and} \qquad \tau = \frac{C(\alpha_T + \kappa_2)^2 \sigma_x^2 T^2}{r^2}.
\end{equation*}
\end{prop*}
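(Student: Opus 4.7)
My plan is to reduce the RSC inequality~\eqref{EqnLocalRSC} to a concentration statement about a weighted quadratic form in $v \defn \beta_1 - \beta_2$. Writing $r_{i,j} \defn x_i^T(\beta_j - \betastar) - \epsilon_i$, we have
\begin{equation*}
\inprod{\nabla \Loss_n(\beta_1) - \nabla \Loss_n(\beta_2)}{v} = \frac{1}{n}\sum_{i=1}^n \bigl(\ell'(r_{i,1}) - \ell'(r_{i,2})\bigr)(x_i^T v).
\end{equation*}
A Taylor expansion of $\ell'$, combined with the bounds in Assumption~\ref{AsEllDeriv} (so that $\ell'' \ge \alpha_T$ on $[-T, T]$ and $\ell'' \ge -\kappa_2$ everywhere), lower-bounds each summand by $\alpha_T (x_i^T v)^2 - (\alpha_T + \kappa_2)(x_i^T v)^2 \mathbf{1}\{|r_{i,1}| > T \text{ or } |r_{i,2}| > T\}$. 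Using the triangle bound $\mathbf{1}\{|r_{i,j}| > T\} \le \mathbf{1}\{|\epsilon_i| > T/2\} + \mathbf{1}\{|x_i^T(\beta_j - \betastar)| > T/2\}$, the task reduces to lower-bounding
\begin{equation*}
\frac{\alpha_T}{n}\sum_{i=1}^n (x_i^T v)^2 - \frac{C(\alpha_T + \kappa_2)}{n}\sum_{i=1}^n (x_i^T v)^2 \Bigl(\mathbf{1}\{|\epsilon_i| > T/2\} + \sum_{j=1,2}\mathbf{1}\{|x_i^T(\beta_j - \betastar)| > T/2\}\Bigr).
\end{equation*}

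For the leading term, I would invoke the standard sub-Gaussian restricted eigenvalue bound, yielding $\frac{1}{n}\sum_i (x_i^T v)^2 \ge c\lambda_{\min}(\Sigma_x) \|v\|_2^2 - c' \sigma_x^2 \frac{\log p}{n}\|v\|_1^2$ with high probability whenever $n \succsim k \log p$. For each of the subtracted indicator sums, I would apply Cauchy--Schwarz to peel off a fourth-moment factor $\frac{1}{n}\sum_i (x_i^T v)^4$ (controlled by $\sigma_x^4 \|v\|_2^4$ up to sparsity-type remainders via sub-Gaussian concentration) from the empirical frequency of the bad event. The frequency of $\{|\epsilon_i| > T/2\}$ concentrates near $\epsilon_T$ by Hoeffding's inequality, producing the $\sqrt{\epsilon_T}$ factor, and for any \emph{fixed} direction $u$ with $\|u\|_2 \le r$, the empirical frequency of $\{|x_i^T u| > T/2\}$ is at most $2\exp(-cT^2/(\sigma_x^2 r^2))$ plus a small fluctuation.

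The principal obstacle is making the bound on $\frac{1}{n}\sum_i \mathbf{1}\{|x_i^T(\beta_j - \betastar)| > T/2\}$ uniform over $\beta_j - \betastar$ in the $r$-ball, since both the direction and magnitude vary with $\beta_j$. My plan is to replace the discontinuous indicator by a $1$-Lipschitz upper envelope, cover the $\ell_2$-ball of radius $r$ by a finite net, union-bound the tail events over the net, and extend to every point of the ball via the Lipschitz property combined with Rademacher symmetrization and Talagrand's empirical process inequality; a peeling argument in the sparsity level then injects the $\frac{\log p}{n}\|v\|_1^2$ remainder required for the RSC form. Assembling all pieces and invoking condition~\eqref{EqnTricky}, which is precisely calibrated so that the $(\alpha_T + \kappa_2)$-weighted bad-tail term is dominated by a constant fraction of $\alpha_T \lambda_{\min}(\Sigma_x)$, produces the claimed $\alpha = \alpha_T \lambda_{\min}(\Sigma_x)/16$; the aggregated $\|v\|_1^2$ remainders then combine into $\tau \asymp (\alpha_T + \kappa_2)^2 \sigma_x^2 T^2/r^2$, where the ratio $T/r$ enters through the sub-Gaussian tail of $x_i^T(\beta_j - \betastar)$.
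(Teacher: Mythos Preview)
Your high-level strategy matches the paper's: Taylor-expand $\ell'$ via the mean value theorem, split into a ``good'' event where $\ell'' \ge \alpha_T$ and a ``bad'' event where only $\ell'' \ge -\kappa_2$ is available, then control the bad-event contribution by a combination of population tail bounds and empirical-process concentration, followed by peeling in the ratio $\|v\|_1/\|v\|_2$. The technical execution, however, differs from the paper in two ways worth flagging.

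First, the paper's good event $A_i$ carries an additional \emph{scale-invariant} condition $|x_i^T(\beta_1 - \beta_2)| \le \frac{T}{8r}\|\beta_1 - \beta_2\|_2$, and each indicator is replaced by a bounded Lipschitz truncation ($\varphi_t$, $\psi_t$). This buys two things: the resulting empirical process $f(\beta_1,\beta_2)/\|\beta_1-\beta_2\|_2^2$ is uniformly bounded (so the bounded-differences inequality gives concentration of its supremum around its mean), and the Lipschitz structure enables a Gaussian comparison argument to bound the expected supremum by $c\sigma_x\bigl(\frac{RT}{r^2} + \frac{\delta T}{r}\bigr)\sqrt{\frac{\log p}{n}}$ with $\delta = \|v\|_1/\|v\|_2$. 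The factor $T/r$ enters transparently as the Lipschitz constant of $\varphi_{T\|\Delta\|_2/(8r)}$, and after peeling and squaring this is exactly what produces the stated $\tau$. Your Cauchy--Schwarz route instead peels off a uniform fourth-moment process $\frac{1}{n}\sum_i (x_i^T v)^4$, which requires its own (higher-order) empirical-process bound; this is doable but is an extra layer, and your explanation of how $T^2/r^2$ ends up in $\tau$ (``through the sub-Gaussian tail'') is not quite right---that tail feeds the population inequality~\eqref{EqnTricky}, not the $\|v\|_1^2$ remainder.

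Second, and this is a genuine gap as written: your plan to ``cover the $\ell_2$-ball of radius $r$ by a finite net'' to make the bad-event frequency $\frac{1}{n}\sum_i \mathbf{1}\{|x_i^T u| > T/2\}$ uniform over $u = \beta_j - \betastar$ cannot work in high dimensions, since that ball has covering number exponential in $p$. The paper handles this by also using the feasibility constraint $\beta_1,\beta_2 \in \Ball_1(R)$: the supremum is then over a set of small Gaussian width, and the comparison process is bounded via $\bigl\|\frac{1}{n}\sum_i g_i x_i\bigr\|_\infty$ times an $\ell_1$-radius, which is where the $RT/r^2$ term comes from. Your Talagrand/contraction argument can be repaired in the same way---exploit $\|\beta_j - \betastar\|_1 \le 2R$ rather than covering the raw $\ell_2$-ball---but you should state this explicitly. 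Once that is fixed, your approach should go through, though the paper's bounded-truncation route avoids the separate fourth-moment control and makes the dependence of $\tau$ on $T/r$ more direct.
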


\paragraph{Remark:}

Note that for a fixed value of $T$, inequality~\eqref{EqnTricky} places a tail condition on the distribution of $\epsilon_i$ via the term $\epsilon_T$. This may be interpreted as a bound on the variance of the error distribution when $\epsilon_i$ is sub-Gaussian, or a bound on the fraction of outliers when $\epsilon_i$ has a contaminated distribution. Furthermore, the exponential term decreases as a function of the ratio $\frac{T}{r}$. Hence, for a larger value of $\epsilon_T$, the radius $r$ will need to be smaller in order to satisfy the bound~\eqref{EqnTricky}. This agrees with the intuition that the local basin of good behavior for the $M$-estimator is smaller for larger levels of contamination. Finally, note that although $\alpha_T$ and $\kappa_2$ are deterministic functions of the known regression function $\ell$ and could be computed, the values of $\lambda_{\min}(\Sigma_x)$ and $\sigma_x^2$ are usually unknown a priori. Hence, Proposition~\ref{PropMRSC} should be viewed as more of a qualitative result describing the behavior of the RSC parameters as the amount of contamination of the error distribution increases, rather than a bound that can be used to select a suitable robust loss function. \\

The situation where $\Loss_n$ takes the form of a generalized $M$-estimator~\eqref{EqnLossWeight} is more difficult to analyze in its most general form, so we will instead focus on verifying the RSC condition~\eqref{EqnLocalRSC} for the Mallows and Hill-Ryan estimators described in Section~\ref{SecGMEst}. We will show that the RSC condition holds under weaker conditions on the distribution of the $x_i$'s. We have the following lemmas, proved in Appendices~\ref{AppPropMallows} and~\ref{AppPropHillRyan}:
\begin{prop*} [Mallows estimator]
\label{PropMallows}
Suppose the $x_i$'s are drawn from a sub-exponential distribution with parameter $\sigma_x^2$ and the loss function is defined by
\begin{equation*}
\Loss_n(\beta) = \frac{1}{n} \sum_{i=1}^n w(x_i) \ell(x_i^T \beta - y_i),
\end{equation*}
and $w(x_i) = \min\left\{1, \; \frac{b}{\|Bx_i\|_2}\right\}$. Also suppose the bound
\begin{equation*}
cb \opnorm{B^{-1}}_2\sigma_x^2 \left(\epsilon_T^{1/2} + \exp\left(-\frac{c'T}{\sigma_x r}\right)\right) \le \frac{\alpha_T}{2(\alpha_T + \kappa_2)} \cdot \lambda_{\min} \left(\E\left[w(x_i) x_i x_i^T\right]\right)
\end{equation*}
holds. Suppose $\ell$ satisfies Assumption~\ref{AsEllDeriv}, and suppose the sample size satisfies $n \ge c_0 k \log p$. With probability at least $1 - c \exp(-c' \log p)$, the loss function $\Loss_n$ satisfies Assumption~\ref{AsLocalRSC} with
\begin{equation*}
\alpha = \alpha_T \cdot \frac{\lambda_{\min}\left(\E\left[w(x_i) x_i x_i^T\right]\right)}{16}, \qquad \text{and} \qquad \tau = \frac{C(\alpha_T + \kappa_2)^2 \sigma_x^2 T^2}{r^2}.
\end{equation*}
\end{prop*}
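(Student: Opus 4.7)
The plan is to follow the scheme used for Proposition~\ref{PropMRSC}, adapted to the Mallows weight. Writing $\Delta := \beta_1 - \beta_2$ and applying the fundamental theorem of calculus,
\begin{equation*}
\langle \nabla \Loss_n(\beta_1) - \nabla \Loss_n(\beta_2),\, \Delta\rangle = \frac{1}{n}\sum_{i=1}^n w(x_i)(x_i^T\Delta)^2 \int_0^1 \ell''\bigl(s\,(x_i^T\beta_1-y_i)+(1-s)(x_i^T\beta_2-y_i)\bigr)\,ds.
\end{equation*}
Invoking the pointwise bound $\ell''(u) \ge \alpha_T - (\alpha_T+\kappa_2)\mathbb{I}\{|u|>T\}$ from Assumption~\ref{AsEllDeriv}, the right-hand side splits into a clean weighted quadratic form $\alpha_T \cdot \frac{1}{n}\sum_i w(x_i)(x_i^T\Delta)^2$ minus a correction controlled by a tail indicator.

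For the clean term, the first step is to prove the restricted-eigenvalue inequality
\begin{equation*}
\frac{1}{n}\sum_{i=1}^n w(x_i)(x_i^T\Delta)^2 \;\ge\; \tfrac{1}{2}\lambda_{\min}(\Sigma_w)\|\Delta\|_2^2 - c_1 \sigma_x^2 \tfrac{\log p}{n}\|\Delta\|_1^2,
\end{equation*}
with $\Sigma_w := \E[w(x_i)x_ix_i^T]$, uniformly in $\Delta$ with high probability. Because $\|w(x_i)x_i\|_2 \le b\opnorm{B^{-1}}_2$ is uniformly bounded while $x_i^T\Delta$ is only sub-exponential, the summands $w(x_i)x_ix_i^T$ are better behaved than unweighted sub-exponential outer products, so matrix Bernstein concentration combined with a peeling argument over sparse cones (as in Proposition~\ref{PropMRSC} and Loh--Wainwright~\cite{LohWai13}) delivers this bound with the stated scaling $n \ge c_0 k\log p$.

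For the correction term, the key identity is $x_i^T\beta-y_i = x_i^T(\beta-\betastar)-\epsilon_i$, so since $\beta$ lies in the radius-$r$ ball around $\betastar$ along the segment $[\beta_1,\beta_2]$, the event $\{|x_i^T\beta-y_i|>T\}$ is contained in $\{|\epsilon_i|>T/2\} \cup \{|x_i^T(\beta-\betastar)|>T/2\}$. The first piece gives an in-expectation contribution of order $b\opnorm{B^{-1}}_2\sigma_x^2 \epsilon_T^{1/2}\|\Delta\|_2^2$ via independence of $x_i$ and $\epsilon_i$ and Cauchy--Schwarz on the indicator factor, using $w(x_i)|x_i^T\Delta| \le b\opnorm{B^{-1}}_2\|\Delta\|_2$ to tame one power of $|x_i^T\Delta|$. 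The second piece yields the analogous expectation bound with $\exp(-c'T/(\sigma_x r))$ in place of $\epsilon_T^{1/2}$, courtesy of the sub-exponential tail estimate $\mprob(|x_i^T v|>T/2) \le 2\exp(-c'T/(\sigma_x r))$ valid for $\|v\|_2 \le r$. A truncation-plus-Bernstein argument then transfers these expectation bounds to a uniform-in-$\Delta$ high-probability inequality, producing the residual $\tau(\log p/n)\|\Delta\|_1^2$ with $\tau \asymp (\alpha_T+\kappa_2)^2 \sigma_x^2 T^2/r^2$.

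Combining the two halves, the hypothesized inequality on $cb\opnorm{B^{-1}}_2\sigma_x^2(\epsilon_T^{1/2}+\exp(-c'T/(\sigma_x r)))$ ensures that the correction absorbs at most $\tfrac{1}{4}\alpha_T\lambda_{\min}(\Sigma_w)\|\Delta\|_2^2$, leaving a quadratic lead of $\alpha_T\lambda_{\min}(\Sigma_w)/16$ as claimed. The main technical obstacle is controlling the segment supremum in the second correction piece: naively bounding $|x_i^T(\beta-\betastar)| \le \|x_i\|_2 r$ would lose a $\sqrt{p}$ factor under sub-exponential tails, so the proof must instead interchange the $\beta$-supremum with the integral representation of the correction, work with one-dimensional projections $x_i^T v$ directly, and use a covering argument over unit vectors in the sparse cone to lift the pointwise concentration to a bound uniform in $\Delta$.
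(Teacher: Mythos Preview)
Your high-level decomposition matches the paper's: write $\T(\beta_1,\beta_2) \ge (\alpha_T+\kappa_2)f(\beta_1,\beta_2) - \kappa_2\ftil(\beta_1,\beta_2)$ with $\ftil$ the weighted quadratic form and $f$ a truncated version, then bound the gap $\E[\ftil]-\E[f]$ exactly as you do (this is the paper's Lemma~\ref{LemCaterpillar}, the analog of Lemma~\ref{LemButterfly}), and control $\ftil$ via sub-exponential concentration for the product of the bounded term $w(x_i)x_i^T\Delta$ and the sub-exponential term $x_i^T\Delta$. All of that is correct and essentially the paper's argument.

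The gap is in the uniform concentration of the correction over \emph{pairs} $(\beta_1,\beta_2)$. Your correction term depends not only on $\Delta=\beta_1-\beta_2$ but also, through the event $\{|x_i^T(\beta-\betastar)|>T/2\}$, on $\beta_2-\betastar$, which is an arbitrary vector in the full-dimensional $\ell_2$-ball of radius $r$ and \emph{not} in any sparse cone. A covering over ``unit vectors in the sparse cone'' therefore cannot deliver the required uniformity, and hard indicators are not Lipschitz, so the Bernstein-plus-covering route stalls at precisely the obstacle you flag. The paper resolves this differently: it replaces the indicators by smooth Lipschitz truncations $\varphi_t,\psi_t$ (equation~\eqref{EqnTrunc}), making the empirical process Lipschitz in both $\Delta/\|\Delta\|_2$ and $\beta_2$, and then controls the supremum $Z(\delta)$ by a Gaussian comparison argument (Lemma~\ref{LemExpZ}) inherited verbatim from Proposition~\ref{PropMRSC}, since the extra factor $w(x_i)\le 1$ can simply be absorbed. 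The non-sparse $\beta_2$-direction is then handled through the $\ell_1$-constraint $\|\beta_2\|_1\le R$ and H\"older's inequality against $\bigl\|\frac{1}{n}\sum_i g_i x_i\bigr\|_\infty$, yielding the $RT/r^2$ term in inequality~\eqref{EqnCake}. Without the smoothed truncations and this comparison step, your scheme does not close.
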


\begin{prop*} [Hill-Ryan estimator]
\label{PropHillRyan}
Suppose the loss function is defined by
\begin{equation*}
\Loss_n(\beta) = \frac{1}{n} \sum_{i=1}^n w(x_i) \ell\left((x_i^T \beta - y_i) w(x_i)\right),
\end{equation*}
where $w(x_i) = \min\left\{1, \; \frac{b}{\|Bx_i\|_2}\right\}$. Also suppose the bound
\begin{equation}
\label{EqnTrickyWeight}
cb^2 \opnorm{B^{-1}}_2^2 \left(\epsilon_T^{1/2} + \exp\left(- \frac{c'T^2}{b^2 \opnorm{B^{-1}}_2^2 \sigma_x^2 r^2}\right)\right) \le \frac{\alpha_T}{2(\alpha_T + \kappa_2)} \cdot \lambda_{\min}\left(\E\left[w(x_i) x_i x_i^T \right]\right)
\end{equation}
holds. Suppose $\ell$ satisfies Assumption~\ref{AsEllDeriv}, and suppose the sample size satisfies $n \ge c_0 k \log p$. With probability at least $1 - c \exp(-c' \log p)$, the loss function $\Loss_n$ satisfies Assumption~\ref{AsLocalRSC} with
\begin{equation*}
\alpha = \alpha_T \cdot \frac{\lambda_{\min}(w(x_i) x_i x_i^T)}{16}, \qquad \text{and} \qquad \tau = \frac{C(\alpha_T + \kappa_2) b^2 \opnorm{B^{-1}}_2^2 T^2}{r^2}.
\end{equation*}
\end{prop*}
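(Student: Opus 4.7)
The proof follows the same template as Proposition~\ref{PropMallows}, with the crucial new feature that $w(x_i) x_i$ is \emph{deterministically} bounded: $\|w(x_i) x_i\|_2 \le b \opnorm{B^{-1}}_2 \defn b_0$. This boundedness substitutes for any sub-Gaussian or sub-exponential tail assumption on the raw covariates. First, since
\begin{equation*}
\nabla \Loss_n(\beta) = \frac{1}{n} \sum_{i=1}^n w(x_i)^2\, \ell'\bigl((x_i^T \beta - y_i) w(x_i)\bigr) x_i,
\end{equation*}
setting $\Delta = \beta_1 - \beta_2$ and applying the mean value theorem gives
\begin{equation*}
\inprod{\nabla \Loss_n(\beta_1) - \nabla \Loss_n(\beta_2)}{\Delta} = \frac{1}{n} \sum_{i=1}^n w(x_i)^3\, \ell''(\widetilde{u}_i)\, (x_i^T \Delta)^2,
\end{equation*}
where $\widetilde{u}_i$ lies between the two scaled residuals $w(x_i)(x_i^T \beta_j - y_i)$. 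Introduce the good-event indicator
\begin{equation*}
\mathcal{I}_i = \mathbf{1}\{|\epsilon_i| \le T/2\} \cdot \prod_{j=1,2} \mathbf{1}\{|w(x_i) x_i^T (\beta_j - \betastar)| \le T/4\},
\end{equation*}
on which both scaled residuals lie in $[-T, T]$ and hence $\ell''(\widetilde{u}_i) \ge \alpha_T$; on $\mathcal{I}_i^c$ only $\ell''(\widetilde{u}_i) \ge -\kappa_2$ is available from Assumption~\ref{AsEllDeriv}. Splitting the sum along $\mathcal{I}_i$ lower-bounds the quadratic form by $\alpha_T A - (\alpha_T + \kappa_2) B$, where $A \defn \frac{1}{n} \sum_i w(x_i)^3 (x_i^T \Delta)^2$ and $B \defn \frac{1}{n} \sum_i w(x_i)^3 (x_i^T \Delta)^2 \mathbf{1}\{\mathcal{I}_i^c\}$.

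For the main term $A$, I would apply a restricted-eigenvalue-style concentration to the bounded random vectors $\{w(x_i) x_i\}$, in the style of Loh-Wainwright~\cite{LohWai13}: since boundedness implies sub-Gaussianity with parameter $\asymp b_0^2$, standard matrix concentration combined with a peeling/discretization argument yields, uniformly in $\Delta$ with $\|\beta_j - \betastar\|_2 \le r$,
\begin{equation*}
A \ge \tfrac{1}{2}\, \lambda_{\min}\bigl(\E[w(x_i) x_i x_i^T]\bigr) \|\Delta\|_2^2 - c\, b_0^2\, \tfrac{\log p}{n} \|\Delta\|_1^2
\end{equation*}
on a high-probability event (the comparison between first- and third-moment Gram matrices being elementary under $w \le 1$). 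This already delivers the stated $\alpha$ and, up to constants, $\tau$.

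For the remainder $B$, the pointwise identity $w(x_i)^3 (x_i^T \Delta)^2 = w(x_i) (w(x_i) x_i^T \Delta)^2 \le b_0^2 \|\Delta\|_2^2$ reduces matters to controlling the empirical fraction $\frac{1}{n} \sum_i \mathbf{1}\{\mathcal{I}_i^c\}$. A union bound gives $\mprob(\mathcal{I}_i^c) \le \epsilon_T + 2\, \mprob(|w(x_i) x_i^T (\beta_j - \betastar)| > T/4)$, and a Hoeffding-type tail on the bounded random variable $w(x_i) x_i^T (\beta_j - \betastar)$ furnishes an exponential bound of the form appearing in~\eqref{EqnTrickyWeight}. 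Empirical concentration of $\frac{1}{n} \sum_i \mathbf{1}\{\mathcal{I}_i^c\}$ around its expectation, combined with the standard $\sqrt{\mprob(\mathcal{I}_i^c)}$ inflation absorbing the deviation, yields $B \precsim b_0^2 \|\Delta\|_2^2 \bigl(\epsilon_T^{1/2} + \exp(-c'T^2/(b_0^2 \sigma_x^2 r^2))\bigr)$, and hypothesis~\eqref{EqnTrickyWeight} is exactly the condition under which $(\alpha_T + \kappa_2) B$ is absorbed into half the main-term contribution.

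The principal obstacle is the uniformity in $(\beta_1, \beta_2)$ throughout. For $A$ this is the familiar RSC-type concentration over sparse directions adapted to bounded designs. The more delicate step is for $B$: converting a pointwise tail bound on $\mprob(\mathcal{I}_i^c)$ into a uniform bound on the empirical fraction of bad indices over the local $\ell_2$-ball of radius $r$. This is handled by a covering-net argument that exploits the Lipschitz dependence of the indicator's probability on $\beta_j$---a consequence of the deterministic bound on $w(x_i) x_i$---together with a union bound over the cover.
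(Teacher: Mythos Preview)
Your overall strategy is in the same spirit as the paper's proof, and the decomposition $\T \ge \alpha_T A - (\alpha_T + \kappa_2) B$ is correct. The treatment of the main term $A$ via sub-Gaussian (bounded-design) concentration is also fine in outline. However, there is a genuine gap in your handling of the remainder $B$.

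The obstruction is precisely the one you flag: you need $B$ bounded \emph{uniformly} in $(\beta_1, \beta_2)$ over the local region, and you propose to do this via a covering net plus ``Lipschitz dependence of the indicator's probability.'' But Lipschitz continuity of the \emph{probability} $\mprob(\mathcal{I}_i^c)$ in $\beta_j$ does not transfer to the \emph{empirical} fraction $\frac{1}{n}\sum_i \mathbf{1}\{\mathcal{I}_i^c\}$, because indicators are discontinuous. If you try to monotonize---dominating the indicator at $\beta_j$ by a slightly relaxed indicator at the nearest net point, which the boundedness of $w(x_i)x_i$ does allow---you still need to union-bound over the net, and an $\epsilon$-net of the $p$-dimensional local ball has cardinality $(c/\epsilon)^p$. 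Hoeffding concentration at each net point then requires $n \gtrsim p$ to survive the union bound, which is far stronger than the stated $n \ge c_0 k \log p$. The $\ell_1$-constraint $\|\beta\|_1 \le R$ does not rescue this either, since the family of halfspace indicators $\{\mathbf{1}\{|w(x_i)x_i^T v| > T/4\}\}$ indexed by an $\ell_1$-ball still has VC dimension of order $p$.

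The paper's fix is to replace the hard indicators by the smooth Lipschitz truncation functions $\varphi_t, \psi_t$ of equation~\eqref{EqnTrunc}, and to keep the quadratic form and the good-event truncation bundled together in a single empirical process $f(\beta_1,\beta_2)$ rather than factoring out $b_0^2\|\Delta\|_2^2$ times an empirical fraction. Because $\varphi_t,\psi_t$ are Lipschitz, the supremum of $|f-\E f|/\|\Delta\|_2^2$ can be controlled by Gaussian comparison (Lemma~\ref{LemExpZ}), which converts the supremum over $\beta_2$ into $R \cdot \bigl\|\frac{1}{n}\sum_i g_i'\, w(x_i)x_i\bigr\|_\infty$ via $\ell_1$--$\ell_\infty$ duality and the constraint $\|\beta_2\|_1 \le R$. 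This is what produces the correct $\sqrt{\log p/n}$ scaling; the $\|\Delta\|_1/\|\Delta\|_2$ dependence is then handled by peeling (Lemma~\ref{LemPeel}). The entire proof of Proposition~\ref{PropMRSC} then goes through verbatim with $x_i$ replaced by the bounded (hence sub-Gaussian) vector $w(x_i)x_i$.

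A smaller point: your claimed lower bound on $A$ invokes $\lambda_{\min}\bigl(\E[w(x_i) x_i x_i^T]\bigr)$, but $\E[A]$ is governed by $\E[w(x_i)^3 x_i x_i^T]$, and $w^3 \le w$ gives the comparison in the wrong direction. You would need to state the bound in terms of the correct power of $w$, or provide a separate two-sided comparison.
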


\paragraph{Remark:}

Due to the presence of the weighting function $w(x_i)$, Proposition~\ref{PropMallows} imposes weaker distributional requirements on the $x_i$'s than Proposition~\ref{PropMRSC}, and the requirements imposed in Proposition~\ref{PropHillRyan} are still weaker. In fact, a version of Proposition~\ref{PropMallows} could be derived with $w(x_i) = \min\left\{1, \; \frac{b^2}{\|Bx_i\|_2^2}\right\}$, which would not require the $x_i$'s to be sub-exponential. The tradeoff in comparing Proposition~\ref{PropHillRyan} to Propositions~\ref{PropMRSC} and~\ref{PropMallows} is that although the RSC condition holds under weaker distributional assumptions on the covariates, the absolute bound $b^2 \opnorm{B^{-1}}_2^2$ used in place of the sub-Gaussian/exponential parameter $\sigma_x^2$ may be much larger. Hence, the relative size of $\epsilon_T$ and the radius $r$ will need to be smaller in order for inequality~\eqref{EqnTrickyWeight} to be satisfied, relative to the requirement for inequality~\eqref{EqnTricky}. \\

In Section~\ref{SecSims} below, we explore the consequences of Propositions~\ref{PropGradBound}--\ref{PropHillRyan} for heavy-tailed, outlier, and sub-exponential distributions.


\subsection{Oracle results and asymptotic normality}
\label{SecVariance}

As discussed in the preceding two subsections, penalized robust $M$-estimators produce local stationary points that enjoy $\ell_1$- and $\ell_2$-consistency whenever $\ell'$ is bounded and the errors and covariates satisfy suitable mild assumptions. In fact, a distinguishing aspect of different robust regression loss functions $\ell$ lies not in first-order comparisons, but in second-order considerations concerning the variance of the estimator. This is a well-known concept in classical robust regression analysis, where $p$ is fixed, $n \rightarrow \infty$, and the objective function does not contain a penalty term. By the Cram\'{e}r-Rao bound and under fairly general regularity conditions~\cite{LehCas98}, the optimal choice of $\ell$ that minimizes the asymptotic variance in the low-dimensional setting is the MLE function, $\ell(u) = - \log p_\epsilon(u)$, where $p_\epsilon$ is the probability density function of $\epsilon_i$. When the class of regression functions is constrained to those with bounded influence functions (or bounded values of $\ell'$), however, a more complex analysis reveals that choices of $\ell$ corresponding, e.g., to the losses introduced in Section~\ref{SecGMEst} produce better performance~\cite{Hub81}.

In this section, we establish oracle properties of penalized robust $M$-estimators. Our main result shows that under many of the assumptions stated earlier, local stationary points of the regularized $M$-estimators actually agree with the local oracle result, defined by
\begin{equation}
\label{EqnBoracle}
\boracle_S \defn \arg\min_{\beta \in \real^S: \; \|\beta - \betastar\|_2 \le r} \left\{\Loss_n(\beta)\right\}.
\end{equation}
This is particularly attractive from a theoretical standpoint, because the oracle result implies that local stationary points inherit all the properties of the lower-dimensional oracle estimator $\boracle_S$, which is covered by previous theory.

Note that $\boracle_S$ is truly an oracle estimator, since it requires knowledge of both the actual support set $S$ of $\betastar$ \emph{and} of $\betastar$ itself; the optimization of the loss function is taken only over a small neighborhood around $\betastar$. In cases where $\Loss_n$ is convex or global optima of $\Loss_n$ that are supported on $S$ lie in the ball of radius $r$ centered around $\betastar$, the constraint $\|\beta - \betastar\|_2 \le r$ may be omitted. If $\Loss_n$ satisfies a local RSC condition~\eqref{EqnLocalRSC}, the oracle program~\eqref{EqnBoracle} is guaranteed to be convex, as stated in the following simple lemma, proved in Appendix~\ref{AppLemConvex}:
\begin{lem*}
\label{LemConvex}
Suppose $\Loss_n$ satisfies the local RSC condition~\eqref{EqnLocalRSC} and $n \ge \frac{2\tau}{\alpha} k \log p$. Then $\Loss_n$ is strongly convex over the region $S_r \defn \left\{\beta \in \real^p: \supp(\beta) \subseteq S, \; \|\beta - \betastar\|_2 \le r\right\}$.
\end{lem*}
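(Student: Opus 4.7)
The plan is to invoke the RSC inequality~\eqref{EqnLocalRSC} on pairs $\beta_1, \beta_2 \in S_r$ and exploit the fact that the difference $\beta_1 - \beta_2$ is supported on $S$, a set of size at most $k$, in order to kill the $\|\cdot\|_1^2$ penalty term by converting it into an $\|\cdot\|_2^2$ term.

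First I would check that both $\beta_1, \beta_2 \in S_r$ satisfy $\|\beta_j - \betastar\|_2 \le r$, which is exactly the hypothesis required to apply Assumption~\ref{AsLocalRSC} at these points. Next, since both $\beta_1$ and $\beta_2$ have support contained in $S$, so does their difference, and Cauchy--Schwarz gives $\|\beta_1 - \beta_2\|_1 \le \sqrt{k}\,\|\beta_1 - \beta_2\|_2$. Substituting this bound into the RSC inequality yields
\begin{equation*}
\inprod{\nabla \Loss_n(\beta_1) - \nabla \Loss_n(\beta_2)}{\beta_1 - \beta_2} \;\ge\; \left(\alpha - \tau k \frac{\log p}{n}\right) \|\beta_1 - \beta_2\|_2^2.
\end{equation*}
Under the sample-size condition $n \ge \frac{2\tau}{\alpha} k \log p$, the parenthesized coefficient is at least $\alpha/2$, so we obtain a uniform strong monotonicity bound
\begin{equation*}
\inprod{\nabla \Loss_n(\beta_1) - \nabla \Loss_n(\beta_2)}{\beta_1 - \beta_2} \;\ge\; \frac{\alpha}{2}\|\beta_1 - \beta_2\|_2^2
\end{equation*}
for all $\beta_1, \beta_2 \in S_r$. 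Since $S_r$ is convex (it is the intersection of the convex coordinate subspace $\{\beta : \supp(\beta) \subseteq S\}$ with a Euclidean ball centered at $\betastar$), strong monotonicity of the gradient on a convex set is equivalent to strong convexity of $\Loss_n$ on that set with parameter $\alpha/2$, completing the proof.

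There is no real obstacle here; the only thing to watch is that one genuinely uses $\supp(\beta_1 - \beta_2) \subseteq S$ to upgrade $\|\beta_1 - \beta_2\|_1^2$ to $k \|\beta_1 - \beta_2\|_2^2$, and that the domain $S_r$ is convex so that the standard monotonicity-implies-convexity argument applies.
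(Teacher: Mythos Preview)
Your proof is correct and follows essentially the same route as the paper: both use $\supp(\beta_1-\beta_2)\subseteq S$ to bound $\|\beta_1-\beta_2\|_1 \le \sqrt{k}\,\|\beta_1-\beta_2\|_2$, substitute into the RSC inequality, and invoke the sample-size condition to obtain a positive strong-convexity constant. Your version is slightly more explicit in noting that $S_r$ is convex and that gradient monotonicity yields strong convexity, but the argument is the same.
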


\noindent In particular, the oracle estimator $\boracle_S$ is guaranteed to be unique. \\

Our central result of this section shows that when the regularizer is $(\mu, \gamma)$-amenable and the loss function satisfies the local RSC condition in Assumption~\ref{AsLocalRSC}, stationary points of the $M$-estimator~\eqref{EqnGeneral} within the local neighborhood of $\betastar$ are in fact unique and equal to the oracle estimator~\eqref{EqnBoracle}. We also require a beta-min condition on the minimum signal strength, which we denote by $\betamin \defn \min_{j \in S} |\betastar_j|$. For simplicity, we state the theorem as a probabilistic result for sub-Gaussian covariates and the unweighted $M$-estimator~\eqref{EqnLoss}; similar results could be derived for generalized $M$-estimators under weaker distributional assumptions, as well.
\begin{thm*}
\label{ThmOracle}
Suppose the loss function $\Loss_n$ is given by the $M$-estimator~\eqref{EqnLoss} and is twice differentiable in the $\ell_2$-ball of radius $r$ around $\betastar$. Suppose the regularizer $\rho_\lambda$ is $(\mu, \gamma)$-amenable. Under the same conditions of Theorem~\ref{ThmStationary}, suppose in addition that $\|\betastar\|_1 \le \frac{R}{2}$ and $\frac{160 \lambda k}{2\alpha - \mu} < R$, and $\betamin \ge C \sqrt{\frac{\log k}{n}} + \gamma \lambda$. Suppose the sample size satisfies $n \ge c_0 \max\{k^2, k \log p\}$. With probability at least $1 - c \exp(-c' \min\{k, \log p\})$, any stationary point $\betatil$ of the program~\eqref{EqnGeneral} such that $\|\betatil - \betastar\|_2 \le r$ satisfies $\supp(\betatil) \subseteq S$ and $\betatil_S = \boracle_S$.
\end{thm*}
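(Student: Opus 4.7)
My plan is to adapt the primal--dual witness (PDW) construction of Loh and Wainwright~\cite{LohWai14} to the local setting, refining the concentration arguments so that they remain valid inside the ball of radius $r$ around $\betastar$. The construction proceeds in four stages.

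Stage~1: Define the oracle-restricted penalized estimator
\begin{equation*}
\widehat{\beta}_S \in \arg\min_{\beta:\; \supp(\beta)\subseteq S,\; \|\beta - \betastar\|_2\le r}\bigl\{\Loss_n(\beta) + \rho_\lambda(\beta)\bigr\},
\end{equation*}
with $\widehat{\beta}_{S^c}=0$. By Lemma~\ref{LemConvex}, $\Loss_n$ is strongly convex on this constraint set with parameter $\alpha$; $\mu$-amenability shrinks the curvature by at most $\mu$, and under $\tfrac{3}{4}\mu<\alpha$ the restricted penalized objective remains strictly convex, so $\widehat{\beta}$ is uniquely defined and satisfies KKT on $S$. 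Stage~2: Identify $\widehat{\beta}_S$ with $\boracle_S$. Since $|S|=k$ and $n\succsim k^2$, classical low-dimensional $M$-estimator theory yields $\|\boracle_S-\betastar_S\|_\infty \precsim \sqrt{\log k/n}$ (consistency plus an $\ell_\infty$-refinement through the invertibility of the Hessian on $S$); combined with $\betamin \ge C\sqrt{\log k/n} + \gamma\lambda$, this gives $|\boracle_{S,j}|\ge \gamma\lambda$ for every $j\in S$. Property~(vii) of $(\mu,\gamma)$-amenability then forces $\rho_\lambda'(\boracle_{S,j})=0$, so $\boracle_S$ satisfies the stationarity condition for the restricted penalized program, and uniqueness from Stage~1 forces $\widehat{\beta}_S=\boracle_S$.

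Stage~3 verifies strict dual feasibility, $\|\nabla_{S^c}\Loss_n(\widehat{\beta})\|_\infty < \lambda$. A mean-value expansion yields
\begin{equation*}
\nabla_{S^c}\Loss_n(\widehat{\beta}) \;=\; \nabla_{S^c}\Loss_n(\betastar) \;+\; \bigl[\nabla^2\Loss_n(\widebar{\beta})\bigr]_{S^c,S}(\widehat{\beta}-\betastar)_S,
\end{equation*}
where $\widebar{\beta}$ lies inside the local ball. The first term is at most $\lambda/4$ by Proposition~\ref{PropGradBound} and the choice of $\lambda$; the second is bounded by combining the $\ell_2$-rate $\|\widehat{\beta}-\betastar\|_2\precsim\lambda\sqrt{k}$ from Theorem~\ref{ThmStationary} with concentration bounds on the rows of the off-support Hessian block, which follow from bounded $\ell''$ (Assumption~\ref{AsEllDeriv}) and sub-Gaussianity of $x_i$. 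Under $n\succsim k^2\log p$ this contribution is $o(\lambda)$, so $\widehat{\beta}$ is a stationary point of the full program~\eqref{EqnGeneral} with $\supp(\widehat{\beta})\subseteq S$. Stage~4 (uniqueness): for any stationary point $\widetilde{\beta}$ with $\|\widetilde{\beta}-\betastar\|_2\le r$, if some $\widetilde{\beta}_j\neq 0$ for $j\in S^c$ then KKT demands $|\nabla_j\Loss_n(\widetilde{\beta})|=\lambda$, but a mean-value comparison between $\widetilde{\beta}$ and $\widehat{\beta}$, together with the strict inequality established in Stage~3, yields $|\nabla_j\Loss_n(\widetilde{\beta})|<\lambda$, a contradiction. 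Hence $\supp(\widetilde{\beta})\subseteq S$, and strict convexity on $S$ (Stage~1) closes $\widetilde{\beta}_S=\widehat{\beta}_S$.

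The hardest step will be Stage~3: the paper deliberately avoids a mutual incoherence hypothesis, so the off-support Hessian block must be controlled using only the sparsity of $\widehat{\beta}-\betastar$, local Assumption~\ref{AsEllDeriv}, and sub-Gaussian concentration inside the local ball---and this is precisely why the stronger sample-size requirement $n\succsim k^2$ enters. A secondary subtlety is the $\ell_\infty$-refinement for the low-dimensional oracle in Stage~2, which likewise depends on local curvature and bounded derivatives of $\ell$ rather than any global property of the loss.
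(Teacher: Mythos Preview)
Your PDW skeleton matches the paper's, and Stages~1--2 are essentially right (the paper proves the $\ell_\infty$-bound for $\boracle_S$ explicitly via a covering argument over $\Ball_2^S(1)$ rather than citing classical theory, but the substance is the same). The real problems are in Stages~3 and~4.

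\textbf{Stage~3.} Your mean-value bound controls the off-support gradient by
\[
\max_{j\in S^c}\bigl\|[\nabla^2\Loss_n(\widebar\beta)]_{j,S}\bigr\|_2\cdot\|\widehat\beta-\betastar\|_2.
\]
But the row norm is of constant order (bounded eigenvalues), and $\|\widehat\beta-\betastar\|_2\asymp\lambda\sqrt{k}$, so this product is $O(\lambda\sqrt{k})$, not $o(\lambda)$; no choice of $n$ rescues the extra $\sqrt{k}$, because both factors scale with $\lambda$ in the same way. The paper does not use the generic $\ell_2$-rate here. Instead, once $\widehat\beta_S=\boracle_S$ and $|\widehat\beta_{S,j}|\ge\gamma\lambda$ force $\nabla q_\lambda(\widehat\beta_S)=\lambda\widehat z_S$, the restricted KKT equation on $S$ yields the \emph{exact} identity
\[
\widehat\beta_S-\betastar_S=-(\Qhat_{SS})^{-1}\bigl(\nabla\Loss_n(\betastar)\bigr)_S,
\]
and substituting this into the $S^c$-block gives $\lambda\widehat z_{S^c}=\Qhat_{S^cS}(\Qhat_{SS})^{-1}(\nabla\Loss_n(\betastar))_S-(\nabla\Loss_n(\betastar))_{S^c}$. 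Now the first term is a product of an operator with bounded rows and a \emph{random} vector whose entries are individually $O(\sqrt{\log p/n})$; after comparing $\Qhat$ to the population Hessian, the resulting bound is $O(\sqrt{\log p/n})$ under $n\ge\max\{k^2,k\log p\}$. This structural identity---not the $\ell_2$-rate---is what avoids the $\sqrt{k}$ loss.

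\textbf{Stage~4.} Your mean-value comparison between $\betatil$ and $\widehat\beta$ runs into the difficulty that $\betatil-\widehat\beta$ is a priori a full $p$-vector with $\ell_1$-norm only bounded by $O(\lambda k)$ (from Theorem~\ref{ThmStationary}); any H\"older bound on $\nabla_j\Loss_n(\betatil)-\nabla_j\Loss_n(\widehat\beta)$ then picks up at least a factor of $k$ or $\sqrt{k}$, again not $o(\lambda)$. The paper does not argue by contradiction on a single coordinate. It applies the local RSC inequality to the pair $(\betatil,\widehat\beta)$, combines it with the weak convexity of $\tfrac{\mu}{2}\|\beta\|_2^2-q_\lambda(\beta)$ and the zero-subgradient condition at $\widehat\beta$, and derives a cone inequality for $\betatil-\widehat\beta$ (their Lemma on $\|\betatil-\widehat\beta\|_1\le C\sqrt{k}\|\betatil-\widehat\beta\|_2$). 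This forces the scalar inequality $\langle\widehat z,\betatil\rangle=\|\betatil\|_1$, and strict dual feasibility $\|\widehat z_{S^c}\|_\infty<1$ then gives $\supp(\betatil)\subseteq S$ directly. You need this RSC-plus-cone mechanism; the coordinatewise comparison is too coarse.
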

The proof of Theorem~\ref{ThmOracle} builds upon the machinery developed in the recent paper~\cite{LohWai14}. However, the argument here is slightly simpler, because we only need to prove the oracle result for stationary points within a radius $r$ of $\betastar$. For completeness, we include a proof of Theorem~\ref{ThmOracle} in Section~\ref{SecThmOracle}, highlighting the modifications that are necessary to obtain the statement in the present paper.

\paragraph{Remark:}

Several other papers~\cite{FanPen04, BraEtal11, LiEtal11} have established oracle results of a similar flavor, but only in cases where the $M$-estimator takes the form described in Section~\ref{SecMEst} and the loss function is convex. Furthermore, the results of previous authors only concern global optima and/or guarantee the existence of local optima with the desired oracle properties. Hence, our conclusions are at once more general and more complex, since we need a more careful treatment of possible local optima. \\

In fact, since the oracle program~\eqref{EqnBoracle} is essentially a $k$-dimensional optimization problem, Theorem~\ref{ThmOracle} allows us to apply previous results in the literature concerning the asymptotic behavior of low-dimensional $M$-estimators to simultaneously analyze the asymptotic distribution of $\boracle_S$ and $\betatil$. Huber~\cite{Hub73} studied asymptotic properties of $M$-estimators when the loss function is convex, and established asymptotic normality assuming $\frac{p^3}{n} \rightarrow 0$, a result which was improved upon by Yohai and Maronna~\cite{YohMar79}. Portnoy~\cite{Por85} and Mammen~\cite{Mam89} extended these results to nonconvex $M$-estimators.
Fewer results exist concerning generalized $M$-estimators: Bai and Wu~\cite{BaiWu97} and He and Shao~\cite{HeSha96} established asymptotic normality for a fairly general class of estimators, but the assumption is that $p$ is fixed and $n \rightarrow \infty$. He and Shao~\cite{HeSha00} extended their results to the case where $p$ is also allowed to grow and proved asymptotic normality when $\frac{p^2 \log p}{n} \rightarrow 0$, assuming a convex loss.

Although the overall $M$-estimator may be highly nonconvex, the \emph{restricted} program~\eqref{EqnBoracle} defining the oracle estimator is nonetheless convex (cf.\ Lemma~\ref{LemConvex} above). Hence, the standard convex theory for $M$-estimators with a diverging number of parameters applies without modification. Since the regularity conditions existing in the literature that guarantee asymptotic normality vary substantially depending on the form of the loss function, we only provide a sample corollary for a specific (unweighted) case, as an illustration of the types of results on asymptotic normality that may be derived from Theorem~\ref{ThmOracle}.
\begin{cor*}
\label{CorDist}
Suppose the loss function $\Loss_n$ is given by the $M$-estimator~\eqref{EqnLoss} and the regularizer $\rho_\lambda$ is $(\mu, \gamma)$-amenable. Under the same conditions of Theorem~\ref{ThmOracle}, suppose in addition that $\ell \in C^3$, $\E[\ell''(\epsilon_i)] \in (0, \infty)$, and $k \ge C \log n$. Let $\betatil$ be any stationary point of the program~\eqref{EqnGeneral} such that $\|\betatil - \betastar\|_2 \le r$. If $\frac{k \log^3 k}{n} \rightarrow 0$, then $\|\betatil - \betastar\|_2 = \order_P\left(\sqrt{\frac{k}{n}}\right)$. If $\frac{k^2 \log k}{n} \rightarrow 0$, then for any $v \in \real^p$, we have
\begin{equation*}
\frac{\sqrt{n}}{\sigma_v} \cdot v^T (\betatil - \betastar) \stackrel{d}{\longrightarrow} N(0, 1),
\end{equation*}
where
\begin{equation*}
\sigma^2_v \defn \frac{1}{\E\left[\ell''(\epsilon_i)\right] \cdot \E\left[\left(\ell'(\epsilon_i)\right)^2\right]} \cdot v^T \left(\frac{X^TX}{n}\right) v.
\end{equation*}
\end{cor*}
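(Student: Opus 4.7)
The plan is to apply Theorem~\ref{ThmOracle} to reduce the analysis of the high-dimensional stationary point $\betatil$ to that of the $k$-dimensional oracle estimator $\boracle_S$, and then invoke classical asymptotic theory for $M$-estimators with a diverging number of parameters.

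Since $k \ge C \log n$, the error probability $c \exp(-c' \min\{k, \log p\})$ in Theorem~\ref{ThmOracle} tends to zero, so with probability approaching one we have $\supp(\betatil) \subseteq S$ and $\betatil_S = \boracle_S$. By Lemma~\ref{LemConvex}, the oracle program is strongly convex on a neighborhood of $\betastar_S$; moreover, Theorem~\ref{ThmStationary} gives $\|\betatil - \betastar\|_2 = \order_P(\sqrt{k \log p / n}) \ll r$, so the radius constraint in~\eqref{EqnBoracle} is inactive and $\boracle_S$ is the classical unpenalized $M$-estimator on $\real^S$ satisfying $\nabla_S \Loss_n(\boracle_S) = 0$.

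Taylor expanding the stationarity condition around $\betastar_S$ (using $\ell \in C^3$) gives
\begin{equation*}
-\frac{1}{n}\sum_{i=1}^n \ell'(\epsilon_i) x_{i,S} = \left(\frac{1}{n}\sum_{i=1}^n \ell''(\epsilon_i) x_{i,S} x_{i,S}^T\right)(\boracle_S - \betastar_S) + R_n,
\end{equation*}
where $R_n$ is a second-order remainder controlled by $\|\ell'''\|_\infty \cdot \|\boracle_S - \betastar_S\|_2^2$. The score on the left has mean zero and $\ell_2$-norm $\order_P(\sqrt{k/n})$ by Hoeffding-type concentration, using that $\ell'$ is bounded. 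The empirical Hessian converges uniformly on the shrinking ball to the positive-definite matrix $\E[\ell''(\epsilon_i)] \Sigma_x$ (by independence of $x_i$ and $\epsilon_i$), and the scaling $k \log^3 k / n \to 0$ is precisely what ensures this uniform convergence via a covering argument. Inverting the Hessian then yields $\|\boracle_S - \betastar_S\|_2 = \order_P(\sqrt{k/n})$, which gives the first assertion.

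For the CLT, I would invert the leading Hessian to produce a Bahadur-type representation expressing $\sqrt{n}\,(\boracle_S - \betastar_S)$ as a linear functional of $\frac{1}{\sqrt{n}} \sum_i \ell'(\epsilon_i) x_{i,S}$ plus an $o_P(1)$ remainder; the sharper scaling $k^2 \log k / n \to 0$ is needed here because the remainder scales as $\sqrt{n}\,\|R_n\|_2 \asymp \sqrt{k^2/n}$ up to logarithmic factors. Projecting onto any fixed $v \in \real^p$ then produces a sum of mean-zero i.i.d.\ summands, and the Lindeberg-Feller CLT delivers the stated normality with variance matching the sandwich formula $\sigma_v^2/n$. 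Alternatively, one may directly invoke the asymptotic normality theorems of Portnoy~\cite{Por85} or Mammen~\cite{Mam89} for nonconvex $M$-estimators with diverging dimension, whose regularity hypotheses are satisfied here. The principal technical obstacle is the uniform control of $R_n$ over the $\order(\sqrt{k/n})$-ball, handled by empirical-process chaining under sub-Gaussianity of $x_i$ and $C^3$-smoothness of $\ell$; this is exactly where the extra logarithmic factor in the scaling assumptions enters.
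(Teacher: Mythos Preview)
Your reduction via Theorem~\ref{ThmOracle} and Lemma~\ref{LemConvex} to the unconstrained $k$-dimensional oracle problem is exactly what the paper does. From there, however, the paper does not carry out a hands-on Taylor/Bahadur expansion: it simply quotes Corollary~2.1 of He and Shao~\cite{HeSha00}, which packages the consistency rate under $k\log^3 k/n\to 0$ and the asymptotic normality under $k^2\log k/n\to 0$ into a single black box. The paper then observes that although He--Shao assume global convexity of $\ell$, their argument only uses convexity on the region where the minimizer lives, which is supplied here by Lemma~\ref{LemConvex}. Your direct expansion is a legitimate alternative and would reproduce the same conclusions, but it requires you to redo work (uniform Hessian convergence, remainder control, Lindeberg verification) that He--Shao have already done.

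One correction: you attribute the hypothesis $k\ge C\log n$ to making the error probability in Theorem~\ref{ThmOracle} vanish. That probability is $c\exp(-c'\min\{k,\log p\})$, which already tends to zero in the high-dimensional regime without any lower bound on $k$. The paper actually uses $k\ge C\log n$ to verify the leverage condition $\max_{1\le i\le n}\|x_{i,S}\|_2^2/k=\order_P(1)$ in He--Shao's hypotheses: a union bound over $n$ observations on the sub-Gaussian tail $\mprob(\|x_{i,S}\|_2^2/k\ge t)\le c_1\exp(-c_2 k)$ requires $k\succsim\log n$ to beat the factor of $n$. In your direct approach this same leverage control would be needed to check the Lindeberg condition, so the assumption is not idle---you have just located it in the wrong place.
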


\noindent The proof of Corollary~\ref{CorDist} is provided in Appendix~\ref{AppCorDist}. Analogous results may be derived for other loss functions considered in this paper under slightly different regularity assumptions, by modifying appropriate low-dimensional results with diverging dimensionality (e.g., \cite{Por85, Mam89}).



\section{Optimization}
\label{SecOpt}

We now discuss how our statistical theory gives rise to a useful two-step algorithm for optimizing the resulting high-dimensional $M$-estimators. We first present some theory for the composite gradient descent algorithm, including rates of convergence for the regularized problem. We then describe our new two-step algorithm, which is guaranteed to converge to a stationary point within the local region where the RSC condition holds, even when the $M$-estimator is nonconvex.


\subsection{Composite gradient descent}
\label{SecCompGrad}

In order to obtain stationary points of the program~\eqref{EqnGeneral}, we use the composite gradient descent algorithm~\cite{Nes07}. Denoting $\Lossbar_n(\beta) \defn \Loss_n(\beta) - q_\lambda(\beta)$, we may rewrite the program as
\begin{equation*}
\betahat \in \arg\min_{\|\beta\|_1 \le R} \left\{\Lossbar_n(\beta) + \lambda \|\beta\|_1\right\}.
\end{equation*}
Then the composite gradient iterates are given by
\begin{equation}
\label{EqnIterates}
\beta^{t+1} \in \arg\min_{\|\beta\|_1 \le R} \left\{\frac{1}{2} \left\|\beta - \left(\beta^t - \frac{\nabla \Lossbar_n(\beta^t)}{\eta}\right)\right\|_2^2 + \frac{\lambda}{\eta} \|\beta\|_1\right\},
\end{equation}
where $\eta$ is the stepsize parameter. Defining the soft-thresholding operator $S_{\lambda/\eta}(\beta)$ componentwise according to
\begin{equation*}
S_{\lambda/\eta}^j \defn \sign(\beta_j) \left(|\beta_j| - \frac{\lambda}{\eta}\right)_+,
\end{equation*}
a simple calculation shows that the iterates~\eqref{EqnIterates} take the form
\begin{equation}
\label{EqnSoftThresh}
\beta^{t+1} = S_{\lambda/\eta} \left(\beta^t - \frac{\nabla \Lossbar_n(\beta^t)}{\eta}\right).
\end{equation}

The following theorem guarantees that the composite gradient descent algorithm will converge at a linear rate to point near $\betastar$ as long as the initial point $\beta^0$ is chosen close enough to $\betastar$. We will require the following assumptions on $\Loss_n$, where
\begin{equation*}
\T'(\beta_1, \beta_2) \defn \Loss_n(\beta_1) - \Loss_n(\beta_2) - \inprod{\nabla \Loss_n(\beta_2)}{\beta_1 - \beta_2}
\end{equation*}
denotes the Taylor remainder.
\begin{assumption}
\label{AsOpt}
Suppose $\Loss_n$ satisfies the restricted strong convexity condition
\begin{equation}
\label{EqnRSC2}
\T'(\beta_1, \beta_2) \ge \alpha' \|\beta_1 - \beta_2\|_2^2 - \tau' \frac{\log p}{n} \|\beta_1 - \beta_2\|_1^2,
\end{equation}
for all $\beta_1, \beta_2 \in \real^p$ such that $\|\beta_1 - \betastar\|_2, \|\beta_2 - \betastar\|_2 \le r$. In addition, suppose $\Loss_n$ satisfies the restricted smoothness condition
\begin{equation}
\label{EqnRSM}
\T'(\beta_1, \beta_2) \le \alpha'' \|\beta_1 - \beta_2\|_2^2 + \tau'' \frac{\log p}{n} \|\beta_1 - \beta_2\|_1^2, \qquad \forall \beta_1, \beta_2 \in \real^p.
\end{equation}
\end{assumption}
Note that the definition of $\T'$ differs slightly from the definition of the related Taylor difference used in Assumption~\ref{AsLocalRSC}. However, one may verify the RSC condition~\eqref{EqnRSC2} in exactly the same way as we verify the RSC condition~\eqref{EqnLocalRSC} via the mean value theorem argument of Section~\ref{SecRSC}, so we do not repeat the proofs here. The restricted smoothness condition~\eqref{EqnRSM} is fairly mild and is easily seen to hold with $\tau'' = 0$ when the loss function $\ell$ appearing in the definition of the $M$-estimator has a bounded second derivative. We will also assume for simplicity that $q_\lambda$ is convex, as is the case for the SCAD and MCP regularizers; the theorem may be extended to situations where $q_\lambda$ is nonconvex, given an appropriate quadratic bound on the Taylor remainder of $q_\lambda$.

We have the following theorem, proved in Appendix~\ref{SecThmOptimization}. It guarantees that as long as the initial point $\beta^0$ of the composite gradient descent algorithm is chosen close enough to $\betastar$, the log of the $\ell_2$-error between iterates $\beta^t$ and a global minimizer $\betahat$ of the regularized $M$-estimator~\eqref{EqnGeneral} will decrease linearly with $t$ up to the order of the statistical error $\|\betahat - \betastar\|_2$.
\begin{thm*}
\label{ThmOptimization}
Suppose $\Loss_n$ satisfies the RSC condition~\eqref{EqnRSC2} and the RSM condition~\eqref{EqnRSM}, and suppose $\rho_\lambda$ is $\mu$-amenable with $\mu < 2 \alpha$ and $q_\lambda$ is convex. Suppose the regularization parameters satisfy the scaling
\begin{equation*}
C \max\left\{\|\nabla \Loss_n(\betastar)\|_\infty \; \tau \sqrt{\frac{\log p}{n}}\right\} \le \lambda \le \frac{C' \alpha}{R}.
\end{equation*}
Also suppose $\betahat$ is a global optimum of the objective~\eqref{EqnGeneral} over the set $\|\betahat - \betastar\|_2 \le \frac{r}{2}$. Suppose $\eta \ge 2 \alpha''$ and
\begin{equation}
\label{EqnScaling}
n \ge \frac{4(2\tau' + \tau'')}{\alpha' - \mu/2 + \eta/2} \cdot \frac{\alpha' - \mu/2}{\alpha' - \mu/2 + \eta/2} \cdot \frac{r^2}{4} \cdot R^2 \log p.
\end{equation}
If $\beta^0$ is chosen such that $\|\beta^0 - \betastar\|_2 \le \frac{r}{2}$, successive iterates of the composite gradient descent algorithm satisfy the bound
\begin{equation*}
\|\beta^t - \betahat\|_2^2 \le \frac{c}{2\alpha - \mu} \left(\delta^2 + \frac{\delta^4}{\tau} + c\tau \frac{k \log p}{n} \|\betahat - \betastar\|_2^2\right), \qquad \forall t \ge T^*(\delta),
\end{equation*}
where $\delta^2 \ge \frac{c'\|\betahat - \betastar\|_2^2}{1-\kappa}$ is a tolerance parameter, $\kappa \in (0,1)$, and $T^*(\delta) = \frac{c''\log(1/\delta^2)}{\log(1/\kappa)}$.
\end{thm*}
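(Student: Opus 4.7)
The plan is to adapt the composite gradient descent analysis of Agarwal--Negahban--Wainwright, as refined in Loh--Wainwright, to the \emph{locally} restricted strongly convex setting. The crucial new challenge, compared to the globally-RSC analyses in the literature, is ensuring that all iterates $\beta^t$ remain inside the ball of radius $r$ about $\betastar$ where the curvature condition~\eqref{EqnRSC2} actually holds; once this invariant is in place, standard arguments take over.

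First I would rewrite the update~\eqref{EqnSoftThresh} as the minimizer of the quadratic surrogate
\begin{equation*}
\beta^{t+1} = \arg\min_{\|\beta\|_1 \le R} \left\{ \Lossbar_n(\beta^t) + \inprod{\nabla \Lossbar_n(\beta^t)}{\beta - \beta^t} + \frac{\eta}{2}\|\beta - \beta^t\|_2^2 + \lambda\|\beta\|_1 \right\}.
\end{equation*}
Because $\eta \ge 2\alpha''$, the RSM condition~\eqref{EqnRSM} together with convexity of $q_\lambda$ gives a majorization of $\Lossbar_n$ up to a slack of order $\tau''\frac{\log p}{n}\|\beta^{t+1}-\beta^t\|_1^2$, yielding a per-step descent inequality for the composite objective $\phi_n(\beta) \defn \Loss_n(\beta) + \rho_\lambda(\beta)$.

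Second, and this is the main obstacle, I would establish the invariant: if $\|\beta^t - \betastar\|_2 \le r$, then $\|\beta^{t+1} - \betastar\|_2 \le r$. Since $\beta^0$ is within $r/2$ of $\betastar$ and the update is descent in $\phi_n$ (modulo the RSM slack), every iterate lies in the sublevel set $\{\beta : \|\beta\|_1 \le R,\; \phi_n(\beta) \le \phi_n(\beta^0)\}$. Using $\mu$-amenability of $\rho_\lambda$ and the local RSC condition~\eqref{EqnRSC2}, combined with the lower bound on $n$ in~\eqref{EqnScaling} to absorb the $\tau'\frac{\log p}{n}\|\cdot\|_1^2$ tolerance via the feasibility constraint $\|\beta\|_1 \le R$, one shows this sublevel set is contained in the Euclidean ball of radius $r$ about $\betastar$. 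The choice $\|\betahat - \betastar\|_2 \le r/2$ is what gives the needed slack to close this induction at radius $r$ rather than $r/2$.

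Third, with iterates confined to the RSC region, the standard geometric decay argument applies: combining the per-step descent property with the local form of~\eqref{EqnRSC2} and convexity of $q_\lambda$ produces a contraction
\begin{equation*}
\phi_n(\beta^{t+1}) - \phi_n(\betahat) \le \kappa\bigl(\phi_n(\beta^t) - \phi_n(\betahat)\bigr) + \xi_n,
\end{equation*}
with contraction factor $\kappa = \frac{\alpha' - \mu/2}{\alpha' - \mu/2 + \eta/2} \in (0,1)$ and an additive tolerance $\xi_n$ coming from the $\tau$-slack. Iterating yields $\phi_n(\beta^t) - \phi_n(\betahat) \le \kappa^t (\phi_n(\beta^0) - \phi_n(\betahat)) + \frac{\xi_n}{1-\kappa}$, and the stopping time $T^*(\delta) = c''\log(1/\delta^2)/\log(1/\kappa)$ is read off directly.

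Finally I would convert function-value convergence into the stated $\ell_2$-error bound. Applying~\eqref{EqnRSC2} once more with $\beta_1 = \beta^t$ and $\beta_2 = \betahat$ (both inside the RSC region) gives
\begin{equation*}
(2\alpha - \mu)\|\beta^t - \betahat\|_2^2 \;\lesssim\; \bigl(\phi_n(\beta^t) - \phi_n(\betahat)\bigr) + \tau\frac{\log p}{n}\|\beta^t - \betahat\|_1^2.
\end{equation*}
The $\ell_1$ term is controlled by a cone-type argument: both $\beta^t$ and $\betahat$ are within $\order(\lambda k)$ of $\betastar$ in $\ell_1$-norm (the bound for $\betahat$ comes from Theorem~\ref{ThmStationary}, and the bound for $\beta^t$ follows from the sublevel-set containment above). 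Substituting gives the claimed floor term $c\tau\frac{k\log p}{n}\|\betahat - \betastar\|_2^2$, and plugging in $\phi_n(\beta^t) - \phi_n(\betahat) \le \delta^2$ together with a small correction of order $\delta^4/\tau$ (arising from squaring the $\ell_1$-bound expressed in terms of $\delta$) yields the stated inequality.
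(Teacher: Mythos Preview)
Your overall architecture is right: the new work is confining the iterates to the local RSC region, after which the Loh--Wainwright machinery applies verbatim. But your mechanism for the invariant in step~2 has a genuine gap.

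You propose to trap the iterates via a sublevel-set argument: (a) iterates lie in $\{\beta:\|\beta\|_1\le R,\ \phi_n(\beta)\le\phi_n(\beta^0)\}$ because the update is ``descent modulo RSM slack'', and (b) this sublevel set sits inside the $r$-ball by local RSC. Both halves fail. For (a), the per-step inequality you derive carries an \emph{additive} slack of order $(\tau'+\tau'')\frac{R^2\log p}{n}$, so $\phi_n(\beta^t)$ is not monotone and you cannot conclude iterates stay in the sublevel set of $\phi_n(\beta^0)$. For (b), proving that the sublevel set is contained in the $r$-ball would require applying RSC to points that may lie outside the $r$-ball, which is exactly where RSC is not assumed; the argument is circular.

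The paper's Lemma~\ref{LemBasin} instead inducts on $\|\beta^t-\betahat\|_2\le r/2$ and uses a two-stage bootstrap per step. First, combining RSC at $(\betahat,\beta^t)$ (valid by the inductive hypothesis), global RSM, and the first-order condition for $\beta^{t+1}$ yields only the crude recursion
\[
\|\beta^{t+1}-\betahat\|_2^2 \le \|\beta^t-\betahat\|_2^2 + \frac{8(\tau'+\tau'')}{\eta}\,\frac{R^2\log p}{n},
\]
which under the sample-size assumption gives $\|\beta^{t+1}-\betahat\|_2\le r$ (not $r/2$). The point is that this \emph{weak} bound already places $\beta^{t+1}$ inside the RSC region, so one may now apply RSC a second time, to the pair $(\beta^{t+1},\betahat)$; combining with the same ingredients produces a genuine contraction
\[
\|\beta^{t+1}-\betahat\|_2^2 \le \frac{\eta/2}{\alpha'-\mu/2+\eta/2}\,\|\beta^t-\betahat\|_2^2 + \frac{4(2\tau'+\tau'')}{\alpha'-\mu/2+\eta/2}\,\frac{R^2\log p}{n},
\]
which closes the induction at $r/2$ under~\eqref{EqnScaling}. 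This bootstrap is what breaks the circularity your sublevel-set argument runs into. Once Lemma~\ref{LemBasin} is in hand, the cone bound (Lemma~\ref{LemCone}) and the remaining contraction-to-$\ell_2$ conversion follow as in Loh--Wainwright; your steps~3 and~4 are then essentially correct.
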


\paragraph{Remark:}

It is not obvious a priori that even if $\beta^0$ is chosen within a small constant radius of $\betastar$, successive iterates will also remain close by. Indeed, the hard work to establish this fact is contained in the proof of Lemma~\ref{LemBasin} in Appendix~\ref{SecThmOptimization}. Furthermore, note that we cannot expect a global convergence guarantee to hold in general, since the only assumption on $\Loss_n$ is the local version of RSC. Hence, a local convergence result such as the one stated in Theorem~\ref{ThmOptimization} is the best we can hope for in this scenario. \\

In the simulations of Section~\ref{SecSims}, we see cases where initializing the composite gradient descent algorithm outside the local basin of attraction where the RSC condition holds causes iterates to converge to a stationary point outside the local region, and the resulting stationary point is \emph{not} consistent for $\betastar$. Hence, the assumption imposed in Theorem~\ref{ThmOptimization} concerning the proximity of $\beta^0$ to $\betastar$ is necessary in order to ensure good behavior of the optimization trajectory for nonconvex robust estimators.


\subsection{Two-step estimators}
\label{SecTwoStep}

As discussed in Section~\ref{SecMain} above, whereas different choices of the regression function $\ell$ with bounded derivative yield estimators that are asymptotically unbiased and satisfy the same $\ell_2$-bounds up to constant factors, certain $M$-estimators may be more desirable from the point of view of asymptotic efficiency. When $\ell$ is nonconvex, we can no longer guarantee fast global convergence of the composite gradient descent algorithm---indeed, the algorithm may even converge to statistically inconsistent local optima. Nonetheless, Theorem~\ref{ThmOptimization} guarantees that the composite gradient descent algorithm will converge quickly to a desirable stationary point if the initial point is chosen within a constant radius of the true regression vector. We now propose a new two-step algorithm, based on Theorem~\ref{ThmOptimization}, that may be applied to optimize high-dimensional robust $M$-estimators. Even when the regression function is nonconvex, our algorithm will always converge to a stationary point that is statistically consistent for $\betastar$.

\paragraph{Two-step procedure:}

\begin{itemize}
\item[(1)] Run composite gradient descent using a convex regression function $\ell$ with convex $\ell_1$-penalty, such that $\ell'$ is bounded.
\item[(2)] Use the output of step (1) to initialize composite gradient descent on the desired high-dimensional $M$-estimator.
\end{itemize}

According to our results on statistical consistency (cf.\ Theorem~\ref{ThmStationary}), step (1) will produce a global optimum $\betahat^1$ such that $\|\betahat^1 - \betastar\|_2 \le c\sqrt{\frac{k \log p}{n}}$, as long as the regression function $\ell$ is chosen appropriately.\footnote{The rate of convergence may be sublinear in the initial iterations~\cite{Nes07}, but we are still guaranteed to have convergence.} Under the scaling $n \ge Cr^2 \cdot k \log p$, we then have $\|\betahat^1 - \betastar\|_2 \le r$. Hence, by Theorem~\ref{ThmOptimization}, composite gradient descent initialized at $\betahat^1$ in step (2) will converge to a stationary point of the $M$-estimator at a linear rate. By our results of Section~\ref{SecMain}, the final output $\betahat^2$ in step (2) is then statistically consistent and agrees with the local oracle estimator if we use a $(\mu, \gamma)$-amenable penalty.

\paragraph{Remark}

Our proposed two-step algorithm bears some similarity to classical algorithms used for locating optima of robust regression estimators in low-dimensional settings. Recall the notion of a one-step $M$-estimator~\cite{Bic75}, which is obtained by taking a single step of the Newton-Raphson algorithm starting from a properly chosen initial point. Yohai~\cite{Yoh87} and Simpson et al.~\cite{SimEtal92} study asymptotic properties of one-step $GM$- and $MM$-estimators in the setting where $p$ is fixed, and show that the resulting regression estimators may simultaneously enjoy high-breakdown and high-efficiency properties. Welsh and Ronchetti~\cite{WelRon02} present a finer-grained analysis of the asymptotic distribution and influence function of one-step $M$-estimators as a function of the initialization point. Most directly related is the suggestion of Hampel et al.~\cite{HamEtal11} for optimizing redescending $M$-estimators using a one-step procedure initialized using a least median of squares estimator, in order to overcome the problem of nonconvexity and possibly multiple local optima; however, the method is mostly justified heuristically. Although each step of our two-step method involves running a composite gradient descent algorithm fully until convergence, the overall goal is still to produce an estimator at the end of the second step that is more efficient and has better theoretical properties than the solution of the first step alone. \\

The simulations in the next section demonstrate the efficacy of our two-step algorithm and the importance of step (1) in obtaining a proper initialization to the composite gradient procedure in step (2).


\section{Simulations}
\label{SecSims}

In this section, we expound upon some concrete instances of our theoretical results and provide simulation results. Throughout, we generate i.i.d.\ data from the linear model
\begin{equation*}
y_i = x_i^T \betastar + \epsilon_i, \qquad \forall 1 \le i \le n.
\end{equation*}


\subsection{Statistical consistency}

In the first set of simulations, we verify the $\ell_2$-consistency of high-dimensional robust regression estimators when data are generated from various distributions.

We begin our discussion with a lemma that demonstrates the failure of the Lasso to achieve the minimax $\order\left(\sqrt{\frac{k \log p}{n}}\right)$ rate when the $\epsilon_i$'s are drawn from an $\alpha$-stable distribution with $\alpha < 2$. Recall that a variable $X_0$ has an $\alpha$-stable distribution with scale parameter $\gamma$ if the characteristic function of $X_0$ is given by
\begin{equation}
\label{EqnAlphaChar}
\E[\exp(itX_0)] = \exp\left(-\gamma^\alpha |t|^\alpha\right), \qquad \forall t \in \real,
\end{equation}
and $\alpha \in (0, 2]$~\cite{Nol15}. In particular, the standard normal distribution is an $\alpha$-stable distribution with $(\alpha, \gamma) = \left(2, \frac{1}{\sqrt{2}}\right)$, and the standard Cauchy distribution (also known as a $t$-distribution with one degree of freedom) is an $\alpha$-stable distribution with $(\alpha, \gamma) = (1,1)$. The lemma is proved in Appendix~\ref{AppLemStableLasso}.

\begin{lem*}
\label{LemStableLasso}
Suppose $X$ is a sub-Gaussian matrix and $\epsilon$ is an i.i.d.\ vector of $\alpha$-stable random variables with scale parameter 1. Suppose $\lambda \asymp \sqrt{\frac{\log p}{n}}$. If $\alpha < 2$ and $\log p = o\left(n^{\frac{2 - \alpha}{\alpha}}\right)$, we have
\begin{equation*}
\mprob\left(\left\|\frac{X^T\epsilon}{n}\right\|_\infty \ge \lambda\right) \ge c_\alpha > 0,
\end{equation*}
where $c_\alpha \le 1$ is a constant that depends only on the sub-Gaussian parameter of the rows of $X$ and does \emph{not} scale with the problem dimensions. In particular, if $\epsilon$ is an i.i.d.\ vector of Cauchy random variables, the Lasso estimator is \emph{inconsistent}.
\end{lem*}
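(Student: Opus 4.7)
\textbf{Proof plan for Lemma~\ref{LemStableLasso}.}

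The plan is to exploit the characteristic stability property of $\alpha$-stable laws to get an exact conditional distribution for each coordinate $(X^T\epsilon)_j/n$, then compare its scale parameter to $\lambda$.

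First I would recall the standard derivation from the characteristic function~\eqref{EqnAlphaChar}: if $\epsilon_1,\dots,\epsilon_n$ are i.i.d.\ standard $\alpha$-stable and $a \in \real^n$, then
\begin{equation*}
\E\!\left[\exp\!\left(it \sum_{i=1}^n a_i \epsilon_i\right)\right] \;=\; \prod_{i=1}^n \exp\!\left(-|ta_i|^\alpha\right) \;=\; \exp\!\left(-|t|^\alpha \sum_{i=1}^n |a_i|^\alpha\right),
\end{equation*}
so $\sum_i a_i \epsilon_i$ is $\alpha$-stable with scale $\gamma(a) = (\sum_i |a_i|^\alpha)^{1/\alpha}$. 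Conditioning on $X$ and applying this with $a_i = X_{ij}/n$, I conclude that $(X^T\epsilon)_j / n$ is conditionally $\alpha$-stable with scale $\gamma_j \defn \frac{1}{n}\bigl(\sum_i |X_{ij}|^\alpha\bigr)^{1/\alpha}$.

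Second, I would lower-bound $\gamma_j$ with high probability. Since the rows of $X$ are sub-Gaussian, the variables $|X_{ij}|^\alpha$ (with $\alpha \le 2$) have all moments and, being bounded by a sub-exponential quantity, concentrate around $\E[|X_{ij}|^\alpha] =: m_\alpha > 0$ by Bernstein's inequality. For a single $j$, with probability at least $1 - \exp(-c n)$,
\begin{equation*}
\frac{1}{n}\sum_{i=1}^n |X_{ij}|^\alpha \;\ge\; \frac{m_\alpha}{2}, \qquad \text{hence} \qquad \gamma_j \;\ge\; \Bigl(\tfrac{m_\alpha}{2}\Bigr)^{1/\alpha} n^{(1-\alpha)/\alpha} \;=:\; c_1 \, n^{(1-\alpha)/\alpha}.
\end{equation*}

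Third, I would compare $\lambda$ to $\gamma_j$. Using $\lambda \asymp \sqrt{\log p / n}$,
\begin{equation*}
\frac{\lambda}{\gamma_j} \;\precsim\; \frac{\sqrt{\log p/n}}{c_1\, n^{(1-\alpha)/\alpha}} \;=\; \frac{1}{c_1} \cdot \frac{\sqrt{\log p}}{n^{(2-\alpha)/(2\alpha)}},
\end{equation*}
which tends to $0$ precisely under the stated scaling $\log p = o\bigl(n^{(2-\alpha)/\alpha}\bigr)$. Since a standard $\alpha$-stable law has a bounded, continuous density and hence a continuous CDF at the origin, there exists $\epsilon_0 > 0$ with $\mprob(|Z| \ge \epsilon_0) \ge 1/2$ for a standard $\alpha$-stable $Z$. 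For $n$ large enough that $\lambda/\gamma_j \le \epsilon_0$ on the high-probability event above, conditional on $X$,
\begin{equation*}
\mprob\!\left(\bigl|(X^T\epsilon)_j/n\bigr| \ge \lambda \;\big|\; X\right) \;=\; \mprob(|Z| \ge \lambda/\gamma_j) \;\ge\; 1/2.
\end{equation*}
Integrating over $X$ and bounding $\|X^T\epsilon/n\|_\infty \ge |(X^T\epsilon)_1/n|$ yields the claimed constant lower bound $c_\alpha$, depending only on $\alpha$ and on the sub-Gaussian parameter of the rows of $X$ through $m_\alpha$.

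For the Cauchy ($\alpha=1$) conclusion, the same argument gives $\gamma_j = \Theta(1)$ with high probability (by the law of large numbers applied to $|X_{ij}|$), so $\|X^T\epsilon/n\|_\infty$ does not shrink to $0$. This forces any choice of $\lambda$ guaranteeing the standard Lasso consistency condition $\lambda \ge 2\|X^T\epsilon/n\|_\infty$ to be $\Omega(1)$, which in turn prevents the $\ell_2$-error bound $\|\betahat - \betastar\|_2 \precsim \lambda \sqrt{k}$ from tending to $0$. The main subtlety I expect to manage carefully is the uniformity of the continuity-at-zero argument for $Z$ across $\alpha \in (0,2)$; this is handled by fixing $\alpha$ up front (so that $\epsilon_0$ and the resulting $c_\alpha$ depend only on $\alpha$), rather than trying to chase a constant uniform in $\alpha$.
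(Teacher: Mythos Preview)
Your proposal is correct and follows essentially the same route as the paper: both use the $\alpha$-stable characteristic function to identify the conditional scale of a single coordinate $(X^T\epsilon)_j/n$, invoke sub-exponential concentration of $\frac{1}{n}\sum_i |X_{ij}|^\alpha$ via the sub-Gaussianity of $X$, and then compare $\lambda$ to this scale to show the threshold ratio tends to zero under $\log p = o(n^{(2-\alpha)/\alpha})$. Your write-up is in fact slightly more explicit than the paper's (you spell out the $\epsilon_0$ step and the conditioning-then-integrating argument), but the ideas and structure are the same.
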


\noindent In contrast, as established in Theorem~\ref{ThmStationary} and the propositions of Section~\ref{SecRSC}, replacing the ordinary least squares loss by an appropriate robust loss function yields estimators that are consistent at the usual $\order\left(\sqrt{\frac{k \log p}{n}}\right)$ rate. \\

In our first set of simulations, we generated $\epsilon_i$'s from a Cauchy distribution with scale parameter $0.1$, and the $x_i$'s from a standard normal distribution. We ran simulations for three problem sizes: $p = 128, 256$, and $512$, with sparsity level $k \approx \sqrt{p}$. In each case, we set $\betastar = \left(\frac{1}{\sqrt{k}}, \dots, \frac{1}{\sqrt{k}}, 0, \dots, 0\right)$. Figure~\ref{FigConsistency}(a) shows the results when the loss function $\Loss_n$ is equal to the Huber, Tukey, and Cauchy robust losses, and the regularizer is the $\ell_1$-penalty. The estimator $\betahat$ was obtained using the composite gradient descent algorithm described in Section~\ref{SecCompGrad} in the case of the Huber loss, and the two-step algorithm described in Section~\ref{SecTwoStep} in the cases of the Tukey and Cauchy losses, with the output of the Huber estimator used to initialize the second step of the algorithm. In each case, we set the regularization parameters $\lambda = 0.3 \sqrt{\frac{\log p}{n}}$ and $R = 1.1 \; \|\betastar\|_1$, and averaged the results over 50 randomly generated data sets. As shown in the figure, the $\ell_1$-penalized robust regression functions all yield statistically consistent estimators. Furthermore, the curves for different problem sizes align when the $\ell_2$-error is plotted against the rescaled sample size $\frac{n}{k \log p}$, agreeing with the theoretical bound in Theorem~\ref{ThmStationary}.

\begin{figure}
\begin{center}
\begin{tabular}{cc}
\includegraphics[width=0.5\textwidth]{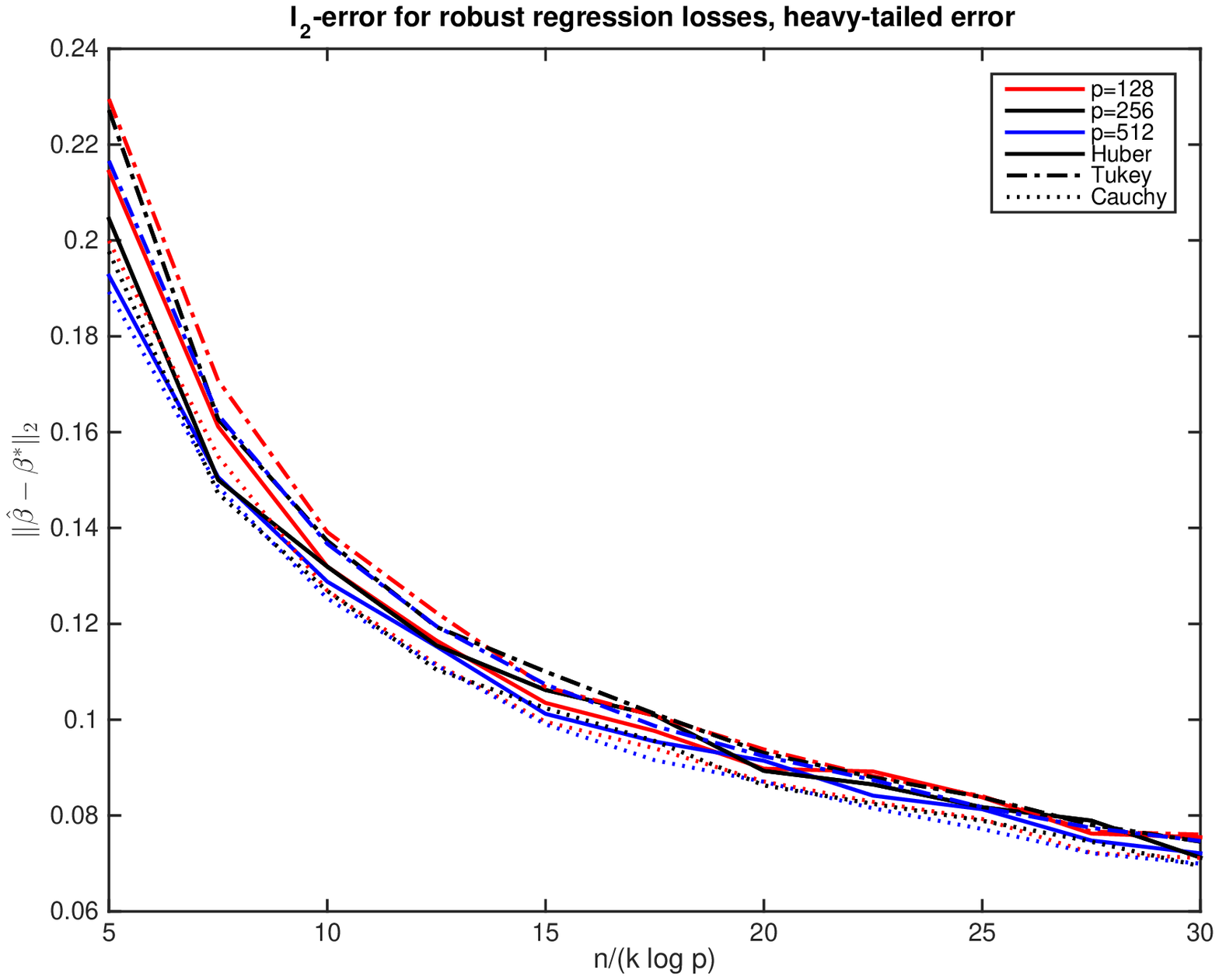} & \includegraphics[width=0.5\textwidth]{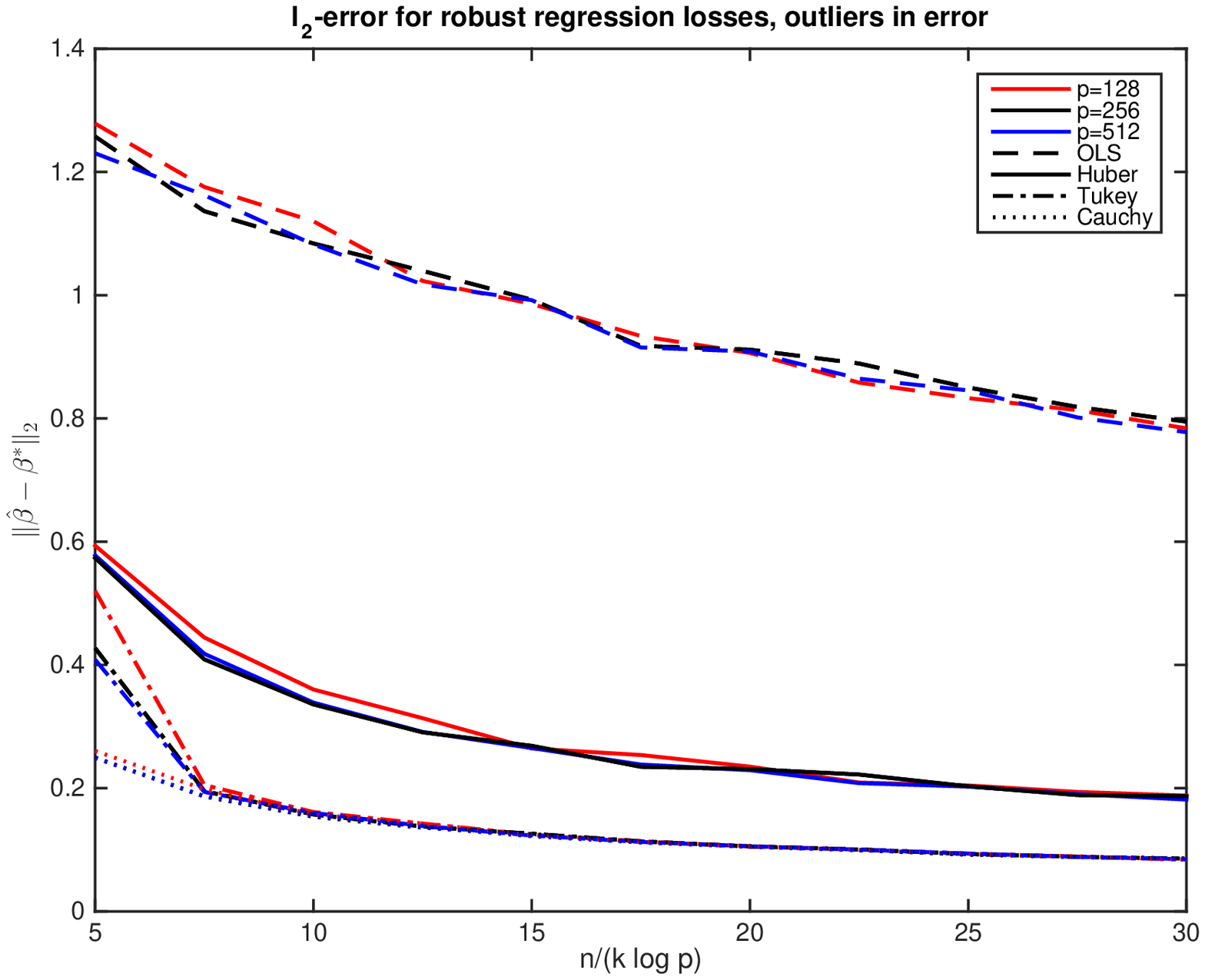} \\
(a) & (b)
\end{tabular}
\caption{Plots showing statistical consistency of $\ell_1$-penalized robust regression functions, when the $x_i$'s are normally distributed but the $\epsilon_i$'s follow a heavy-tailed or normal mixture distribution with a constant fraction of outliers. Each point represents an average over 50 trials. The $\ell_2$-error is plotted against the rescaled sample size $\frac{n}{k \log p}$. Curves correspond to the Huber (solid), Tukey (dash-dotted), Cauchy (dotted), and ordinary least squares (dashed) losses, and are color-coded according to the problem sizes $p = 128$ (red), 256 (black), and 512 (blue). (a) Plots for a heavy-tailed Cauchy error distribution. The Huber, Tukey, and Cauchy robust losses all yield statistically consistent results, as predicted by Theorem~\ref{ThmStationary} and Propositions~\ref{PropGradBound} and~\ref{PropMRSC}. (b) Plots for a mixture of normals error distribution with $30\%$ large-variance outliers. Since the error distribution is sub-Gaussian, the ordinary least squares loss also yields a statistically consistent estimator at minimax rates, up to a constant prefactor; however, the robust regression losses provide a significant improvement in the prefactor.}
\label{FigConsistency}
\end{center}
\end{figure}

We also ran a similar set of simulations when the $\epsilon_i$'s were generated from a mixture of normals, representing a contaminated distribution with a constant fraction of outliers. With probability $0.7$, the value of $\epsilon_i$ was distributed according to $N(0, (0.1)^2)$, and was otherwise drawn from a $N(0, 10^2)$ distribution. Figure~\ref{FigConsistency}(b) shows the results of the simulations. Again, we see that the robust regression functions all give rise to statistically consistent estimators with $\ell_2$-error scaling as $\order\left(\sqrt{\frac{k \log p}{n}}\right)$. We also include the plots for the standard Lasso estimator with the ordinary least squares objective. Since the distribution of $\epsilon_i$ is sub-Gaussian for the mixture distribution, the Lasso estimator is also $\ell_2$-consistent; however, we see that the robust loss functions improve upon the $\ell_2$-error of the Lasso by a constant factor.

Finally, we ran simulations to test the statistical consistency of generalized $M$-estimators under relaxed distributional assumptions on the covariates. We generated $x_i$'s from a sub-exponential distribution, given by independent chi-square variables with 10 degrees of freedom, and recentered to have mean zero. The $\epsilon$'s were drawn from a Cauchy distribution with scale parameter $0.1$. We ran trials for problem sizes $p = 128, 256$, and $512$, with $k \approx \sqrt{p}$ and $\betastar = \left(\frac{1}{\sqrt{k}}, \dots, \frac{1}{\sqrt{k}}, 0, \dots, 0\right)$. We used the $\ell_1$-penalized Mallows estimator described in Proposition~\ref{PropMallows}, with $b = 3$, $B = I_p$, and $\ell$ equal to the Huber loss function, and optimized the function using the composite gradient descent algorithm with random initializations, with the regularization parameters $\lambda = 0.3 \sqrt{\frac{\log p}{n}}$ and $R = 1.1 \; \|\betastar\|_1$. Figure~\ref{FigSubExp} shows the result of the simulations, from which we observe that the Mallows estimator is indeed statistically consistent, as predicted by Theorem~\ref{ThmStationary} and Proposition~\ref{PropMallows}. We also plotted the results for $\ell_1$-penalized Huber regression. It is not difficult to see from the proof of Theorem~\ref{ThmStationary} that $\|\nabla \Loss_n(\betastar)\|_\infty$ is also of the order $\order\left(\sqrt{\frac{k \log p}{n}}\right)$ when the $x_i$'s are sub-exponential, but with a larger prefactor than the Mallows loss. We observe in Figure~\ref{FigSubExp} that the Huber loss indeed appears to yield a statistically consistent estimator as well, but at a relatively slower rate. In our simulations, we needed a slightly larger value $\lambda = \sqrt{\frac{\log p}{n}}$ for the Huber loss in order to achieve statistical consistency.

\begin{figure}
\begin{center}
\includegraphics[width=0.5\textwidth]{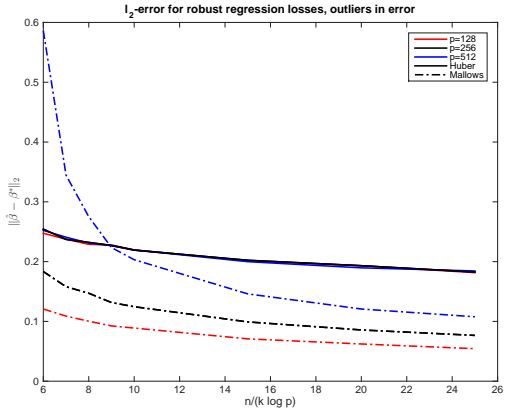}
\caption{Plot showing simulation results for the $\ell_1$-penalized Mallows generalized $M$-estimator with a Huber loss function, when covariates are drawn from a sub-exponential distribution and errors are drawn from a heavy-tailed Cauchy distribution. Results for the $\ell_1$-penalized Huber loss are shown for comparison. Each point represents an average over 50 trials. Although both estimators appear to be statistically consistent, the Mallows estimator exhibits better performance. The plot agrees with the behavior predicted by Theorem~\ref{ThmStationary} and Proposition~\ref{PropMallows}.}
\label{FigSubExp}
\end{center}
\end{figure}


\subsection{Convergence of optimization algorithm}

Next, we ran simulations to verify the convergence behavior of the composite gradient descent algorithm described in Section~\ref{SecOpt}. We set $p = 128$, $k \approx \sqrt{p}$, and $n \approx 20 k \log p$, and generated $\epsilon_i$'s from a Cauchy distribution with scale parameter $0.1$, and the $x_i$'s from a standard normal distribution. We set $\betastar = \left(\frac{1}{\sqrt{k}}, \dots, \frac{1}{\sqrt{k}}, 0, \dots, 0\right)$. We then simulated the solution paths for the Huber and Cauchy loss functions with an $\ell_1$-penalty, with regularization parameters $\lambda = 0.3 \sqrt{\frac{\log p}{n}}$ and $R = 1.1 \; \|\betastar\|_1$. Panel (a) of Figure~\ref{FigSolnPaths} shows solution paths for the composite gradient descent algorithm with the Huber loss from 10 different starting points, chosen randomly from a $N(0, 6^2 I_p)$ distribution. An estimate of the global optimum $\betahat$ was obtained from preliminary runs of the optimization error, and the log optimization error $\log(\|\beta^t  - \betahat\|_2)$ for each of the initializations was computed accordingly. In addition, we plot the statistical error $\log(\|\betahat - \betastar\|_2)$ in red for comparison. As seen in the plot, the log errors decay roughly linearly in $t$. Since the $\ell_1$-penalized Huber objective is convex, our theory guarantees sublinear convergence of the iterates initially and then linear convergence locally around $\betastar$ within the radius $\frac{r}{2}$, as specified by Theorem~\ref{ThmOptimization}. Indeed, our plots suggest nearly linear convergence even outside the local RSC region. All iterates converge to the unique global optimum $\betatil$ (the apparent bifurcation is due to the small nonzero error tolerance provided in our implementation of the algorithm as a criterion for convergence.)

\begin{figure}
\begin{center}
\begin{tabular}{cc}
\includegraphics[width=0.5\textwidth]{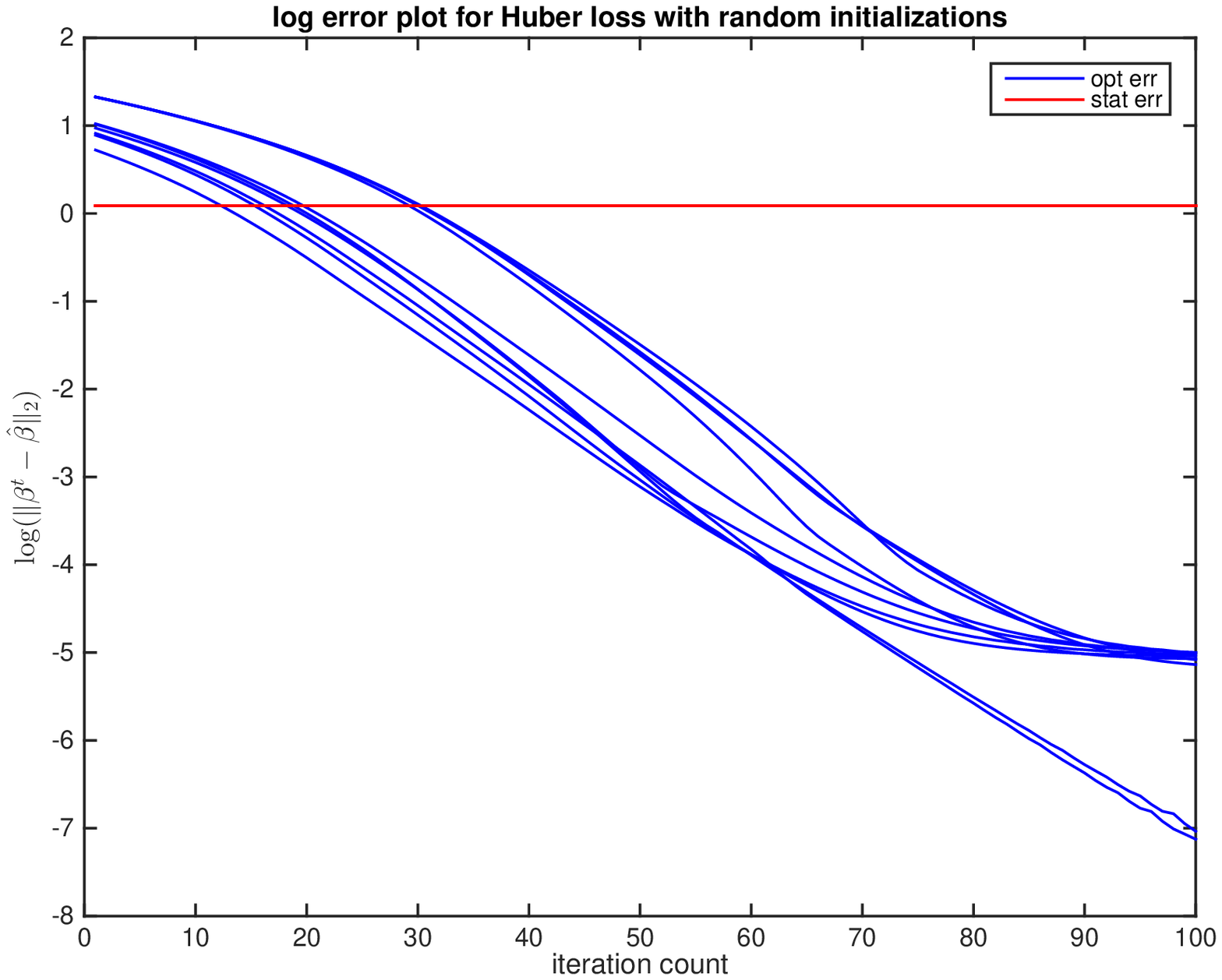} & \includegraphics[width=0.5\textwidth]{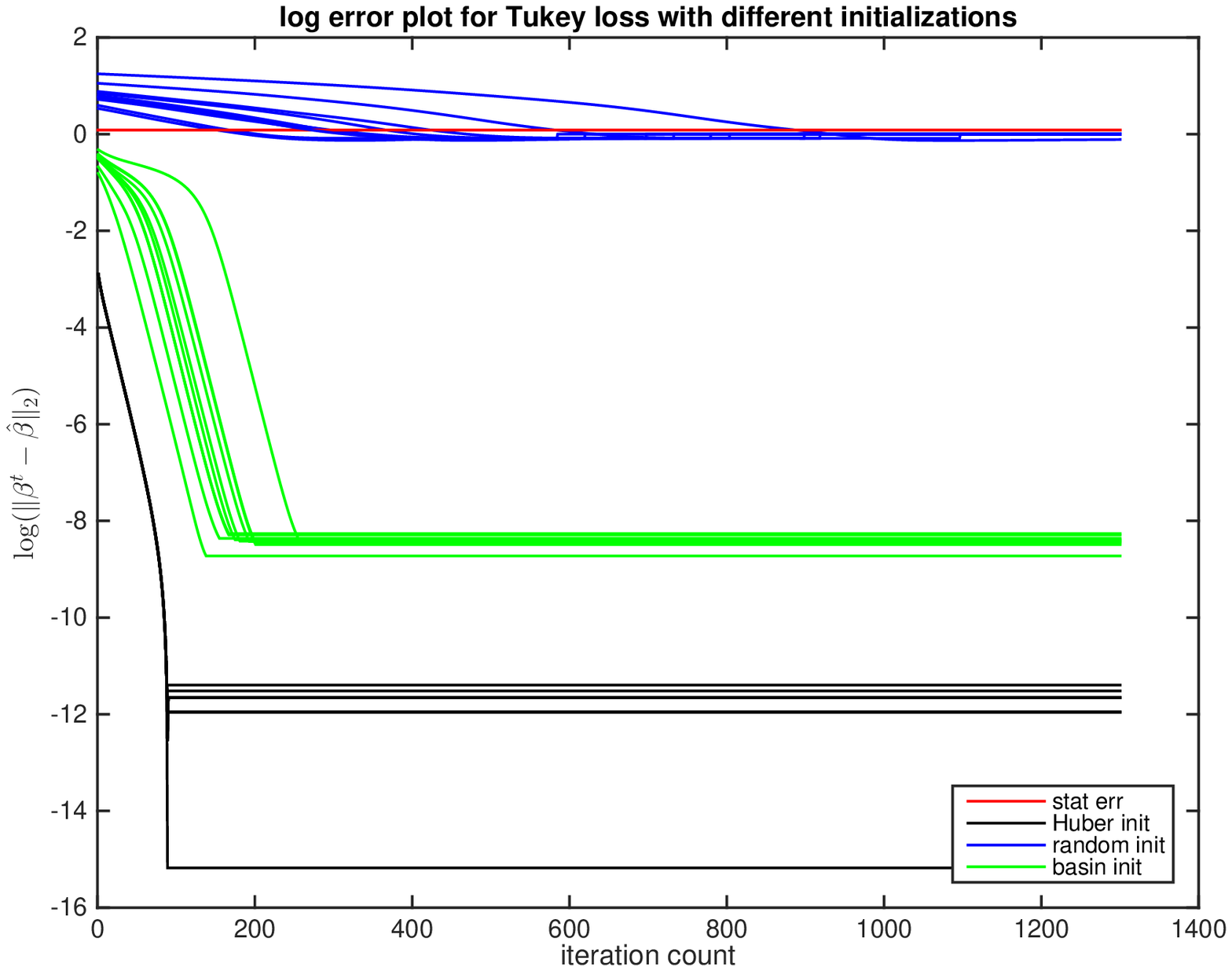} \\
(a) & (b)
\end{tabular}
\caption{Plots showing optimization trajectories for the composite gradient descent algorithm applied to various high-dimensional robust regression functions. The log of the $\ell_2$-error is plotted against the iteration number for a fixed instantiation of the data using the Huber and Tukey loss. The errors are generated from a heavy-tailed Cauchy distribution. Solution paths are shown in blue and measured with respect to $\betastar$; the statistical error is shown for reference and plotted in red. (a) Solution paths for the $\ell_1$-penalized convex Huber loss with 10 random initializations. All iterates converge to a unique optimum $\betatil$. Theorem~\ref{ThmOptimization} guarantees a rate of convergence that is linear on a log scale, once the iterates enter the region where the function satisfies restricted strong convexity. (b) Solution paths for the $\ell_1$-penalized nonconvex Tukey loss with 10 random initializations from the $\ell_1$-penalized Huber output (black); slight perturbations of $\betastar$ within the local basin where the loss function satisfies restricted strong convexity (green); and random initializations (blue). The black and green trajectories all converge at a linear rate to a unique stationary point in the local region, as predicted by Theorem~\ref{ThmOptimization}. The blue iterates converge at a slower rate to an entirely different stationary point. This figure emphasizes the need for proper initialization of the composite gradient algorithm in order to locate a statistically consistent stationary point.}
\label{FigSolnPaths}
\end{center}
\end{figure}

Figure~\ref{FigSolnPaths}(b) shows solution paths using the $\ell_1$-penalized Tukey loss. We plot the composite gradient descent iterates for 10 different starting points chosen by the output of composite gradient descent applied to the $\ell_1$-penalized Huber loss (black) with random initializations; 10 randomly chosen starting points given by $\betastar$ plus a $N(0, (0.1)^2 I_p)$ perturbation (green); and 10 randomly chosen starting points drawn from a $N(0, 3^2 I_p)$ distribution (blue). The simulation results reveal a linear rate of convergence for composite gradient descent iterates in the first two cases, as predicted by Theorem~\ref{ThmOptimization}, since the initial iterates lie within the local region around $\betastar$ where the Tukey loss satisfies the RSC condition. All of the black and green trajectories converge to the same unique stationary point in the local region. In the third case, however, the rate of convergence of composite gradient descent iterates is slower, and the iterates actually converge to a different stationary point further away from $\betastar$. This emphasizes the cautionary message that stationary points may indeed exist for nonconvex robust regression functions that are \emph{not} consistent for the true regression vector, and first-order optimization algorithms may converge to these undesirable stationary points if initialized improperly.


\subsection{Nonconvex regularization}

Finally, we ran simulations to verify the oracle results described in Section~\ref{SecVariance}. Figure~\ref{FigSCAD} shows side-by-side comparisons for robust regression using the Huber and Cauchy loss functions with the SCAD penalty, with parameter $a = 2.5$. We ran simulations for $p = 128, 256$, and $512$, with $k \approx \sqrt{p}$ and $\betastar = \left(\frac{1}{\sqrt{k}}, \dots, \frac{1}{\sqrt{k}}, 0, \dots, 0\right)$. The $\epsilon_i$'s were drawn from a Cauchy distribution with scale parameter $0.1$, and the $x_i$'s were drawn from a standard normal distribution. The $\ell_1$-penalized Huber loss was used to select an initial point for the composite gradient descent algorithm, as prescribed by the two-step algorithm; in all cases, we set the regularization parameters to be $\lambda = \sqrt{\frac{\log p}{n}}$ and $R = 1.1 \; \|\betastar\|_1$. Panel (a) plots the $\ell_2$-error versus the rescaled sample size $\frac{n}{k \log p}$, from which we see that both SCAD-penalized objective functions yield statistically consistent estimators. Panel (b) plots the fraction of trials (out of 50) for which the recovered support of the estimator agrees with the true support of $\betastar$. As we see, the families of curves for different loss functions stack up when the horizontal axis is rescaled according to $\frac{n}{k \log p}$. Furthermore, the probability of correct support recovery transitions sharply from 0 to 1 in panel (b), as predicted by Theorem~\ref{ThmOracle}. Note that the transition point for the Cauchy loss in panel (b), which happens for $\frac{n}{k \log p} \approx 8$, also corresponds to a sharp drop in the $\ell_2$-error in panel (a), since $\betatil$ is then equal to the low-dimensional oracle estimator. Panel (c) plots the empirical variance of $\sqrt{n} \cdot e_1^T (\betatil - \betastar)$, the first component of the error vector rescaled by $\sqrt{n}$. We see that the variance for the Cauchy loss is uniformly smaller than the variance for the Huber loss---indeed, the Cauchy loss corresponds to the MLE of the error distribution. Furthermore, the curves for each loss function roughly align for different problem sizes, and the variance is roughly constant for increasing $n$, as predicted by Corollary~\ref{CorDist}. Note that Corollary~\ref{CorDist} requires third-order differentiability, so it does not directly address the Huber loss. However, the empirical variance of the Huber estimators is also roughly constant, suggesting that a version of Corollary~\ref{CorDist} applicable to the Huber loss might be derived from the oracle results of Theorem~\ref{ThmOracle}.

\begin{figure}
\begin{center}
\begin{tabular}{cc}
\includegraphics[width=0.5\textwidth]{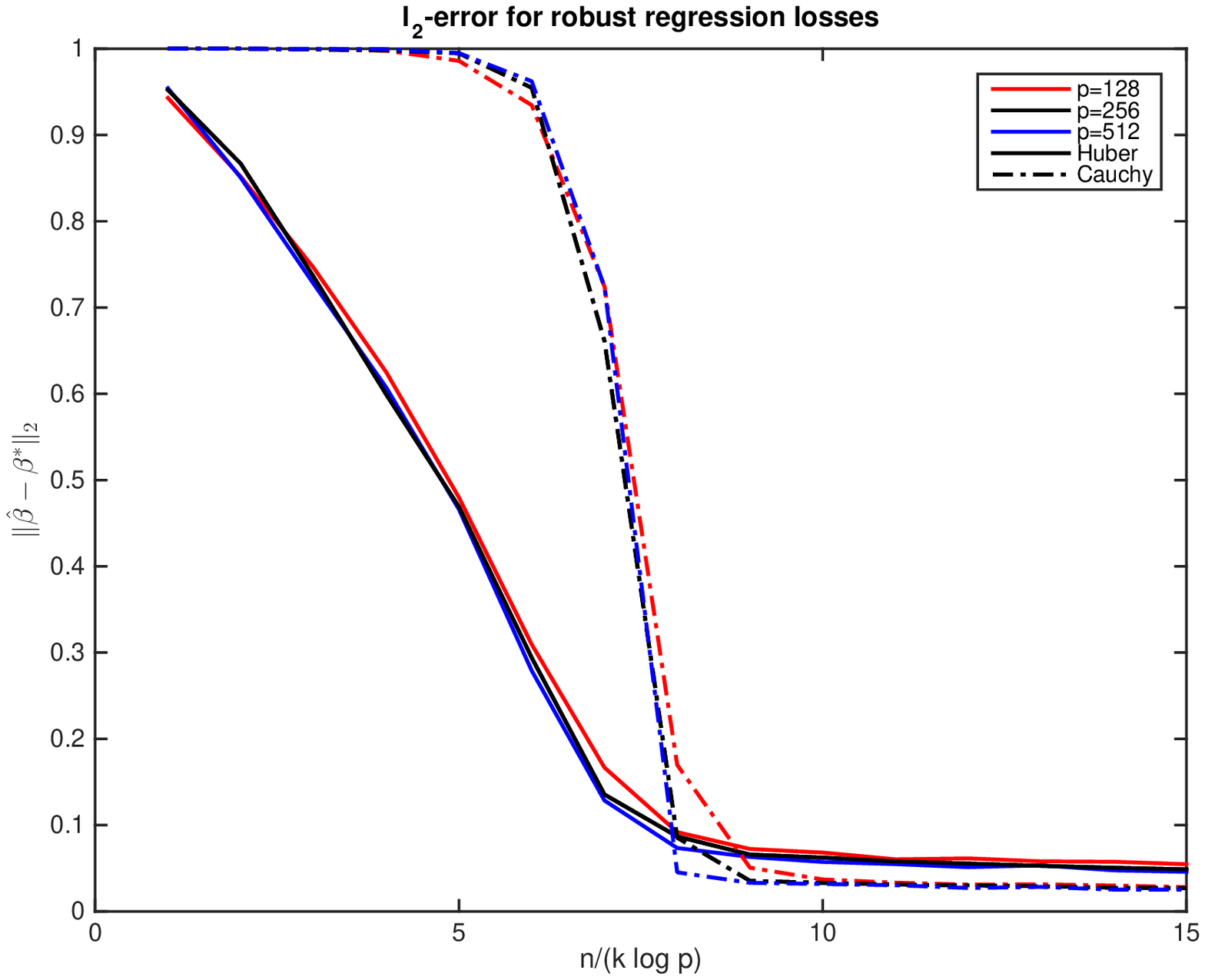} & \includegraphics[width=0.5\textwidth]{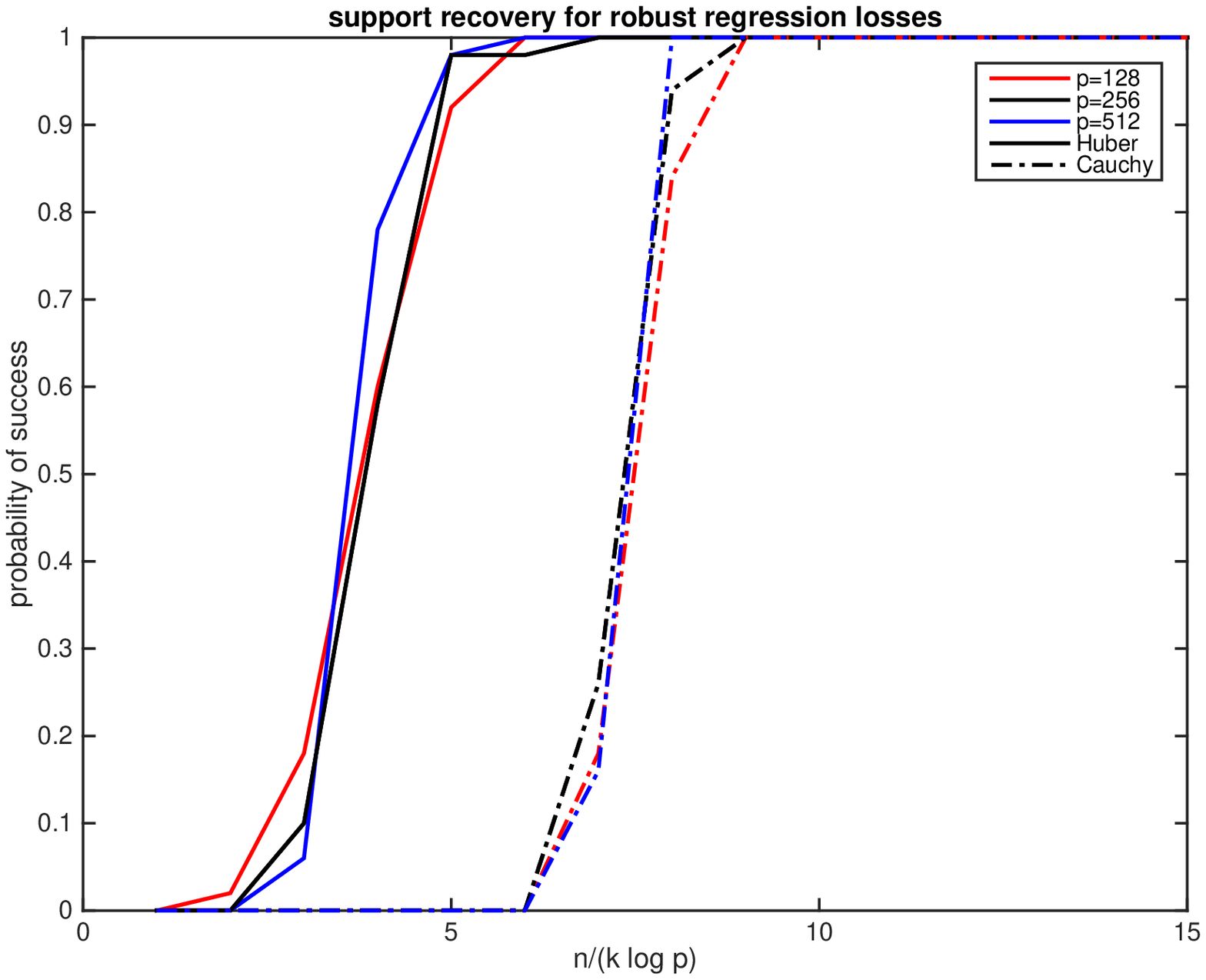} \\
(a) & (b)
\end{tabular}
\begin{tabular}{c}
\includegraphics[width=0.5\textwidth]{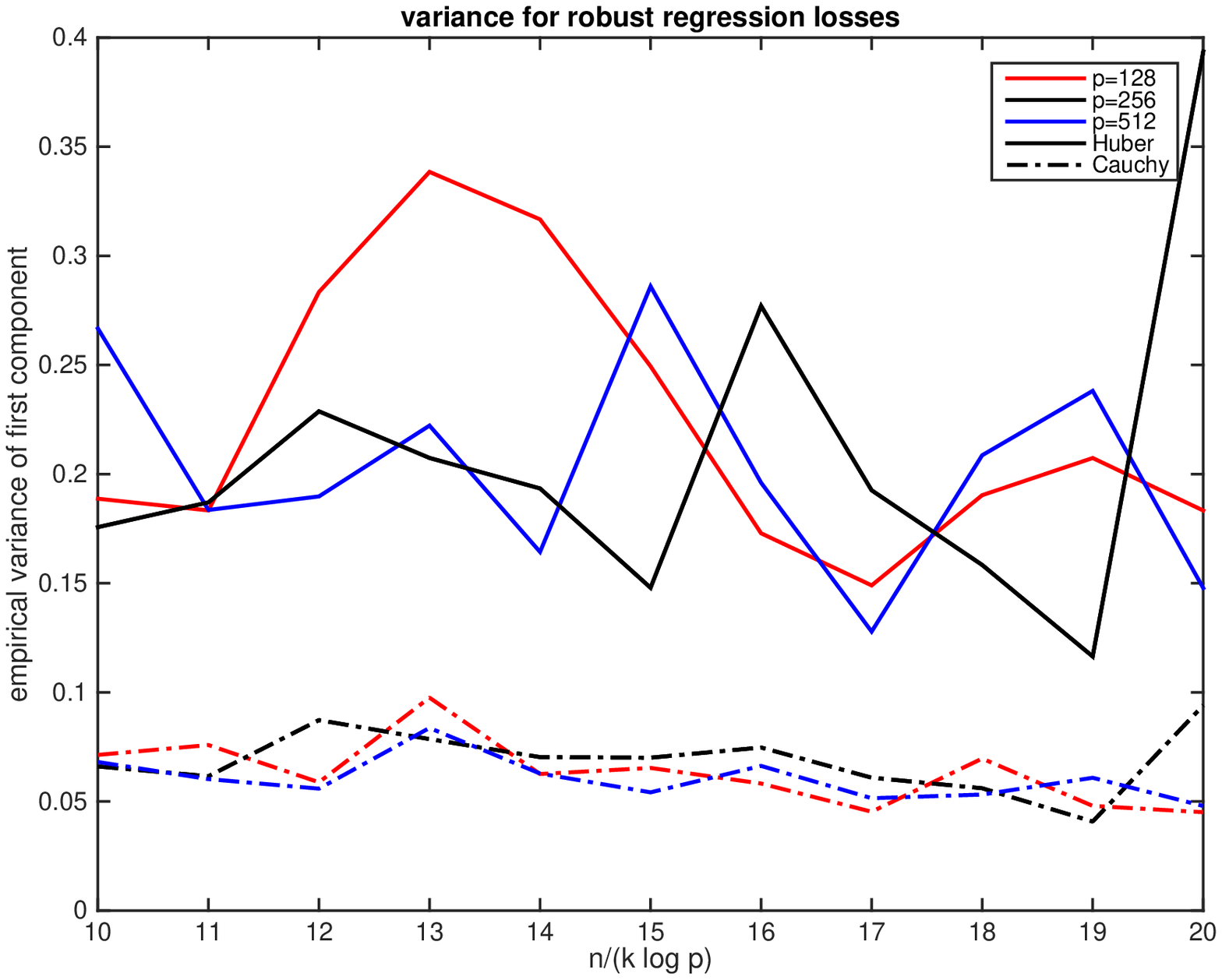} \\
(c)
\end{tabular}
\caption{Plots showing simulation results for robust regression with a nonconvex SCAD regularizer, using a Huber loss (solid lines) and Cauchy loss (dashed lines), for three problem sizes: $p = 128$ (red), $p = 256$ (black), and $p = 512$ (blue). Each point represents an average over 50 trials. (a) Plot showing $\ell_2$-error as a function of the rescaled sample size $\frac{n}{k \log p}$. Both regularizers yield statistically consistent estimators, as predicted by Theorem~\ref{ThmStationary}. (b) Plot showing variable selection consistency. The probability of success in recovering the support transitions sharply from 0 to 1 as a function of the sample size, agreeing with the theoretical predictions of Theorem~\ref{ThmOracle}. The transition threshold corresponds with the sharp drop in $\ell_2$-error seen in panel (a), since $\betatil$ agrees with the oracle result. (c) Plot showing the empirical variance of $\sqrt{n} \cdot e_1^T (\betatil - \betastar)$, the rescaled first component in the error vector. As predicted by the asymptotic normality result of Corollary~\ref{CorDist}, the empirical variance remains roughly constant for sufficiently large sample sizes.}
\label{FigSCAD}
\end{center}
\end{figure}


\section{Discussion}

We have studied penalized high-dimensional robust estimators for linear regression. Our results show that under a local RSC condition satisfied by many robust regression $M$-estimators, stationary points within the region of restricted curvature are actually statistically consistent estimators of the true regression vector, and even under heavy-tailed errors or outlier contamination, these estimators enjoy the same convergence rate as $\ell_1$-penalized least squares regression with sub-Gaussian errors. Furthermore, we show that when the penalty is chosen from an appropriate family of nonconvex, amenable regularizers, the stationary point within the local RSC region is unique and agrees with the local oracle solution. This allows us to establish asymptotic normality of local stationary points under appropriate regularity conditions, and in some cases conclude that the regularized $M$-estimator is asymptotically efficient. Finally, we propose a two-step $M$-estimation procedure for obtaining local stationary points when the $M$-estimator is nonconvex, where the first step consists of optimizing a convex problem to obtain a sufficiently close initialization for a final run of composite gradient descent in the second step.

Several open questions remain that provide interesting avenues for future work. First, although the side constraint $\|\beta\|_1 \le R$ in the regularized $M$-estimation program~\eqref{EqnGeneral} is required in our proofs to ensure that stationary points obey a cone condition, it is unclear whether this side condition is necessary. Indeed, since we are only concerned with stationary points within a small radius $r$ of $\betastar$, the additional $\ell_1$-constraint may be redundant. It would be useful to remove the appearance of $R$ for practical problems, since we would then only need to tune the parameter $\lambda$. Second, as a consequence of the oracle result in Theorem~\ref{ThmOracle}, local stationary points inherit other properties of the oracle solution $\boracle_S$ in addition to asymptotic normality, such as breakdown behavior and properties of the influence function. It would be interesting to explore these properties for robust $M$-estimators with a diverging number of parameters. A potentially harder problem would be to derive bounds on the measures of robustness for stationary points of regularized robust estimators when the oracle result does not hold (i.e., for $\ell_1$-penalized robust $M$-estimators). Lastly, whereas our results on asymptotic normality allow us to draw conclusions regarding the asymptotic variance of the local oracle solution, it would be valuable to derive nonasymptotic bounds on the variance of high-dimensional robust $M$-estimators. By trading off the nonasymptotic bias and variance, one could then determine the form of a robust regression function that is optimal in some sense.



\appendix

\section{Measures of robustness}
\label{AppRobust}

Various methods exist in the classical literature for quantifying the robustness of statistical estimation procedures. In this section, we provide a review of breakdown points, influence functions, and asymptotic variance of robust estimators, and cite relevant literature.

The \emph{finite-sample breakdown point} of an estimator $T_n$ on the sample $\{x_i\}_{i=1}^n$ is defined by
\begin{equation*}
FBP_n(T; x_1, \dots, x_n) \defn \frac{1}{n} \cdot \min\left\{m: \max_{i_1, \dots, i_m} \sup_{y_1, \dots, y_m} |T_n(z_1, \dots, z_n)| = \infty\right\},
\end{equation*}
where $(z_1, \dots, z_n)$ is the sample obtained from $(x_1, \dots, x_n)$ by replacing the data points $(x_{i_1}, \dots, x_{i_m})$ by $(y_1, \dots, y_m)$~\cite{HamEtal11, DonHub83}. One may verify that the finite-sample breakdown point is $\frac{1}{n}$ for $M$-estimators of the type defined in Section~\ref{SecMEst} when $\ell$ is convex~\cite{AlfEtal13}. This provides another reason to use nonconvex loss functions in order to obtain a robust estimator. Although the breakdown behavior of $M$-estimators is much harder to characterize when the loss function is nonconvex, Maronna et al.~\cite{MarEtal79} derived theoretical results showing the breakdown point decays as $\order(p^{-1/2})$ when the $x_i$'s are Gaussian. More recently, Wang et al.~\cite{WangEtal13} analyzed the breakdown point of a certain nonconvex penalized $M$-estimator, but their analysis is again very specific to the precise form of the estimator and requires careful data-dependent tuning of the scale parameter used in the objective function. Under suitable regularity conditions, taking the limit of the finite breakdown point as $n \rightarrow \infty$ yields the \emph{asymptotic breakdown point}, but the latter concept is more technical and we do not discuss it here.

A second measure of robustness is given by the \emph{influence function}. At the population level, the influence function of an estimator $T$ on a distribution $F$ with respect to a point $(x,y)$ is defined by
\begin{equation*}
	IF((x,y); T, F) = \lim_{t \rightarrow 0^+} \frac{T((1-t)F + t \delta_{(x,y)}) - T(F)}{t},
\end{equation*}
where $\delta_{(x,y)}$ is a point mass at $(x,y)$. The \emph{gross error sensitivity} is defined in terms of the influence function as
\begin{equation*}
GES(T,F) \defn \sup_{(x,y)} \big| IF((x,y); T,F) \big|,
\end{equation*}
and the estimator $T$ is \emph{$B$-robust} if $GES(T,F) < \infty$~\cite{Rou81}. In the linear regression case, let $F_\beta$ denote the distribution on $(x_i, y_i, \epsilon_i)$ parametrized by $\beta$. If $T_\ell$ minimizes the $M$-estimator defined in equation~\eqref{EqnLoss} and $\ell$ is twice differentiable, the influence function takes the form
\begin{equation}
\label{EqnPuran}
IF\left((x,y); T_\ell, F_\beta\right) = \ell'(x^T \beta - y) \cdot \left(\E\left[\ell''(x_i^T \beta - y_i) \cdot x_i x_i^T\right]\right)^{-1} x,
\end{equation}
where the expectation is taken with respect to $F_\beta$~\cite[Section 6.3]{HamEtal11}. In particular, if the $x_i$'s are fixed and contamination is only allowed in the $y_i$'s, the influence function in equation~\eqref{EqnPuran} is bounded as a function of $y$, provided $\ell'$ is bounded. For a generalized $M$-estimator $T_\eta$ defined by equation~\eqref{EqnGeneralizedM}, the influence function is given by
\begin{equation}
\label{EqnDosa}
IF\left((x,y); T_\eta, F_\beta\right) = \eta(x, x^T \beta - y) \cdot \left(\E\left[\left(\frac{\partial \eta(x,r)}{\partial r}\right) \Bigg |_{(x_i, y_i)} \cdot x_i x_i^T \right]\right)^{-1} x,
\end{equation}
where the expectation is taken with respect to $F_\beta$~\cite{HamEtal11}. In particular, if $\eta$ takes the form in equation~\eqref{EqnDecompGM}, then equation~\eqref{EqnDosa} simplifies to
\begin{equation}
\label{EqnChapati}
IF((x,y); T_\eta, F_\beta) = w(x) \; \ell'\left((x^T \beta - y)v(x)\right) \cdot \left(\E\left[w(x_i)v(x_i) \cdot \ell''\left(r_i v(x_i)\right) \cdot x_i x_i^T\right]\right)^{-1} x,
\end{equation}
and we see that the overall influence function is bounded whenever $\ell'$ is bounded and $w$ is defined in such a way that $\|w(x)x\|_2$ is bounded.

A finite-sample version of the influence function is known as the \emph{sensitivity curve}, and under suitable regularity conditions, the sensitivity curve converges to the influence function as $n \rightarrow \infty$~\cite{HamEtal11}. The literature concerning influence functions for high-dimensional estimators is again rather sparse, but has been a topic of recent interest~\cite{MedMar14, OllEtal14}.

Finally, we turn to second-order considerations. In the classical low-dimensional setting when $p$ is fixed and $n \rightarrow \infty$, Maronna and Yohai~\cite{MarYoh81} show that under appropriate regularity conditions, the asymptotic variance of an $M$-estimator is given by
\begin{equation*}
V(T,F) = \int IF((x,y); T,F) \cdot IF((x,y); T,F) \; dF(x,y).
\end{equation*}
By the celebrated Cram\'{e}r-Rao bound~\cite{LehCas98}, when the $x_i$'s are fixed and the $\epsilon_i$'s are i.i.d., the asymptotic variance $V(T,F)$ of any unbiased estimator is bounded below by the inverse of the Fisher information of the underlying distribution. Furthermore, this lower bound is achieved when $T$ is the MLE, in which case $T$ is also asymptotically normally distributed~\cite{God60, LehCas98}. As pointed out in the previous paragraph, however, the influence function of the MLE may not be bounded, leading to a critical tradeoff in designing robust $M$-estimators. In addition, the behavior of the asymptotic variance is much harder to analyze when both $n$ and $p$ are allowed to grow. Several recent papers~\cite{ElKEtal13, BeaEtal13, DonMon13} examine the setting where $\frac{n}{p} \rightarrow \delta \in (1, \delta)$, and show that the asymptotic variance of the (unregularized) $M$-estimator coming from a convex loss function includes an additional term not present in the classical fixed-$p$ case. In contrast, we show that with the proper choice of nonconvex penalty, local solutions of nonconvex regularized $M$-estimators coincide with the oracle solution, so they inherit certain optimality properties from classical robust estimation theory. It is these higher-order considerations that reveal the true advantage of using nonconvex loss functions for robust $M$-estimation; although estimators such as the LAD Lasso~\cite{WanEtal07, Wan13} may also be shown to be statistically consistent under reasonable assumptions, the LAD loss is a suboptimal choice from the viewpoint of asymptotic efficiency, under the high-dimensional scaling $n \ge Ck\log p$ and oracle conditions, unless the additive errors follow a double exponential distribution


\section{Proofs of main theorems}
\label{AppProofs}

In this Appendix, we provide the proofs of the main theorems stated in the text of the paper.

\subsection{Proof of Theorem~\ref{ThmStationary}}
\label{SecThmStationary}

We first suppose the existence of stationary points in the local region; we will establish that fact at the end of the proof. Suppose $\betatil$ is a stationary point such that $\|\betatil - \betastar\|_2 \le r$. Since $\betatil$ is a stationary point and $\betastar$ is feasible, we have the inequality
\begin{equation}
\label{EqnKanga}
\inprod{\nabla \Loss_n(\betatil) - \nabla q_\lambda(\betatil) + \lambda \sign(\betatil)}{\betastar - \betatil} \ge 0.
\end{equation}
By the convexity of $\frac{\mu}{2} \|\beta\|_2^2 - q_\lambda(\beta)$, we have
\begin{equation}
\label{EqnOtter}
\inprod{\nabla q_\lambda(\betatil)}{\betastar - \betatil} \ge q_\lambda(\betastar) - q_\lambda(\betatil) - \frac{\mu}{2} \|\betatil - \betastar\|_2^2,
\end{equation}
so together with inequality~\eqref{EqnKanga}, we have
\begin{equation*}
\inprod{\nabla \Loss_n(\betatil) + \lambda \sign(\betatil)}{\betastar - \betatil} \ge q_\lambda(\betastar) - q_\lambda(\betatil) - \frac{\mu}{2} \|\betatil - \betastar\|_2^2.
\end{equation*}
Since $\inprod{\sign(\betatil)}{\betastar - \betatil} \le \|\betastar\|_1 - \|\betatil\|_1$, this means
\begin{equation}
\label{EqnAllspice}
\inprod{\nabla \Loss_n(\betatil)}{\betastar - \betatil} \ge \rho_\lambda(\betatil) - \rho_\lambda(\betastar) - \frac{\mu}{2} \|\betatil - \betastar\|_2^2.
\end{equation}

Now denote $\nutil \defn \betatil - \betastar$. From the RSC inequality~\eqref{EqnLocalRSC}, we have
\begin{equation}
\label{EqnSpicy}
\inprod{\nabla \Loss_n(\betatil) - \nabla \Loss_n(\betastar)}{\betatil - \betastar} \ge \alpha \|\nutil\|_2^2 - \tau \frac{\log p}{n} \|\nutil\|_1^2.
\end{equation}
Combining inequality~\eqref{EqnSpicy} with inequality~\eqref{EqnAllspice}, we then have
\begin{equation}
\label{EqnClove}
\left(\alpha - \frac{\mu}{2}\right) \|\nutil\|_2^2 - \tau \frac{\log p}{n} \|\nutil\|_1^2 + \left(\rho_\lambda(\betatil) - \rho_\lambda(\betastar)\right) \le \inprod{\nabla \Loss_n(\betastar)}{\betastar - \betatil},
\end{equation}
so by H\"{o}lder's inequality, we conclude that
\begin{equation}
\label{EqnRock}
\left(\alpha - \frac{\mu}{2}\right) \|\nutil\|_2^2 - \tau \frac{\log p}{n} \|\nutil\|_1^2 + \left(\rho_\lambda(\betatil) - \rho_\lambda(\betastar)\right) \le \|\nabla \Loss_n(\betastar)\|_\infty \|\nutil\|_1.
\end{equation}
In particular, under the assumed scaling $\lambda \ge 4 \|\nabla \Loss_n(\betastar)\|_\infty$ and $\lambda \ge 8\tau R \frac{\log p}{n}$, we have
\begin{align*}
\left(\alpha - \frac{\mu}{2}\right) \|\nutil\|_2^2 & \le \left(\rho_\lambda(\betastar) - \rho_\lambda(\betatil)\right) + \left(2R\tau \frac{\log p}{n} + \|\nabla \Loss_n(\betastar)\|_\infty\right) \|\nutil\|_1 \\
& \le \left(\rho_\lambda(\betastar) - \rho_\lambda(\betatil)\right) + \frac{\lambda}{2} \|\nutil\|_1 \\
& \le \left(\rho_\lambda(\betastar) - \rho_\lambda(\betatil)\right) + \frac{1}{2} \left(\rho_\lambda(\nutil) + \frac{\mu}{2} \|\nutil\|_2^2\right),
\end{align*}
implying that
\begin{equation}
\label{EqnRednose}
0 \le \left(\alpha - \frac{3\mu}{4}\right) \|\nutil\|_2^2 \le \frac{3}{2} \rho_\lambda(\betastar) - \frac{1}{2} \rho_\lambda(\betatil).
\end{equation}
By Lemma 5 in Loh and Wainwright~\cite{LohWai13}, we then have
\begin{equation}
\label{EqnRoll}
0 \le 3\rho_\lambda(\betastar) - \rho_\lambda(\betatil) \le \lambda \big(3\|\nutil_A\|_1 - \|\nutil_{A^c}\|_1\big),
\end{equation}
where $A$ is the index set of the $k$ largest elements of $\nutil$ in magnitude. Hence,
\begin{equation*}
\|\nutil_{A^c}\|_1 \le 3 \|\nutil_A\|_1,
\end{equation*}
implying that
\begin{equation}
\label{EqnCone}
\|\nutil\|_1 = \|\nutil_A\|_1 + \|\nutil_{A^c}\|_1 \le 4 \|\nutil_A\|_1 \le 4 \sqrt{k} \|\nutil\|_2.
\end{equation}
Combining inequalities~\eqref{EqnRednose} and~\eqref{EqnRoll} then gives
\begin{equation*}
\left(\alpha - \frac{3\mu}{4}\right) \|\nutil\|_2^2 \le \frac{3\lambda}{2} \|\nutil_A\|_1 - \frac{\lambda}{2} \|\nutil_{A^c}\|_1 \le \frac{3\lambda}{2} \|\nutil_A\|_1 \le 6\lambda\sqrt{k} \|\nutil\|_2,
\end{equation*}
from which we conclude that
\begin{equation}
\label{EqnL2Bd}
\|\nutil\|_2 \le \frac{24 \lambda \sqrt{k}}{4\alpha - 3\mu},
\end{equation}
as wanted. Combining the $\ell_2$-bound with inequality~\eqref{EqnCone} then yields the $\ell_1$-bound.

Finally, in order to establish the existence of stationary points, we simply define $\betahat \in \real^p$ such that
\begin{equation}
\label{EqnElmo}
\betahat \in \arg\min_{\|\betahat - \betastar\|_2 \le r, \; \|\betahat\|_1 \le R} \left\{\Loss_n(\beta) + \rho_\lambda(\beta)\right\}.
\end{equation}
Then $\betahat$ is a stationary point of the program~\eqref{EqnElmo}, so by the argument just provided, we have
\begin{equation*}
\|\betahat - \betastar\|_2 \le c \sqrt{\frac{k \log p}{n}}.
\end{equation*}
Provided $n \ge Cr^2 \cdot k \log p$, the point $\betahat$ will lie in the interior of the sphere of radius $r$ around $\betastar$. Hence, $\betahat$ is also a stationary point of the original program~\eqref{EqnGeneral}, guaranteeing the existence of such local stationary points.


\subsection{Proof of Theorem~\ref{ThmOracle}}
\label{SecThmOracle}

This argument is an adaptation of the proofs of Theorems 1 and 2 in the recent paper~\cite{LohWai14}. We follow the primal-dual witness construction introduced there:
\begin{itemize}
\item[(i)] Optimize the restricted program
\begin{equation}
\label{EqnRestricted}
\betahat_S \in \arg\min_{\beta \in \real^S: \; \|\beta\|_1 \le R} \left\{\Loss_n(\beta) + \rho_\lambda(\beta)\right\},
\end{equation}
and establish that $\|\betahat_S\|_1 < R$.
\item[(ii)] Define $\zhat_S \in \partial \|\betahat_S\|_1$, and choose $\zhat_{S^c}$ to satisfy the zero-subgradient condition
\begin{equation}
\label{EqnZeroGrad}
\nabla \Loss_n(\betahat) - \nabla q_\lambda(\betahat) + \lambda \zhat = 0,
\end{equation}
where $\zhat = (\zhat_S, \zhat_{S^c})$ and $\betahat \defn (\betahat_S, 0_{S^c})$. Show that $\betahat_S = \boracle_S$ and establish strict dual feasibility: $\|\zhat_{S^c}\|_\infty < 1$.
\item[(iii)] Verify via second-order conditions that $\betahat$ is a local minimum of the program~\eqref{EqnGeneral} and conclude that all stationary points $\betatil$ satisfying $\|\betatil - \betastar\|_2 \le r$ are supported on $S$.
\end{itemize}

\paragraph{Step (i):}

By Theorem~\ref{ThmStationary} applied to the restricted program~\eqref{EqnRestricted}, we have
\begin{equation*}
\|\betahat_S - \betastar_S\|_1 \le \frac{80 \lambda k}{2\alpha - \mu},
\end{equation*}
so
\begin{equation*}
\|\betahat_S\|_1 \le \|\betastar\|_1 + \|\betahat_S - \betastar_S\|_1 \le \frac{R}{2} + \frac{80 \lambda k}{2 \alpha - \mu} < R,
\end{equation*}
using the assumptions of the theorem. This establishes step (i) of the PDW construction. \\

\paragraph{Step (ii):}

Since $\betahat_S$ is an interior point of the restricted program~\eqref{EqnRestricted}, it must satisfy a zero-subgradient condition on the restricted program, implying that we may define $\zhat_{S^c}$ to satisfy equation~\eqref{EqnZeroGrad}. We rewrite the zero-subgradient condition~\eqref{EqnZeroGrad} as
\begin{equation*}
\left(\nabla \Loss_n(\betahat) - \nabla \Loss_n(\betastar)\right) +
\left(\nabla \Loss_n(\betastar) - \nabla q_\lambda(\betahat)\right) +
\lambda \zhat = 0,
\end{equation*}
and by the fundamental theorem of calculus,
\begin{equation*}
\Qhat (\betahat - \betastar) + \left(\nabla
\Loss_n(\betastar) - \nabla q_\lambda(\betahat)\right) + \lambda \zhat
= 0,
\end{equation*}
where $\Qhat \defn \int_0^1 \nabla^2 \Loss_n\left(\betastar + t(\betahat - \betastar)\right) dt$. In block form, this means
\begin{equation}
\label{EqnZeroSubBlock}
\begin{bmatrix}
\Qhat_{SS} & \Qhat_{SS^c} \\
\Qhat_{S^cS} & \Qhat_{S^cS^c}
\end{bmatrix}  
\begin{bmatrix}		\betahat_S - \betastar_S \\ 0
\end{bmatrix}
+
\begin{bmatrix} \nabla \Loss_n(\betastar)_S - \nabla
                q_\lambda(\betahat_S) \\ \nabla
                \Loss_n(\betastar)_{S^c} - \nabla
                q_\lambda(\betahat_{S^c})
\end{bmatrix}
+ \lambda \begin{bmatrix} \zhat_S \\ \zhat_{S^c}
        \end{bmatrix}
= 0.
\end{equation}

We now have the following lemma, concerning the oracle estimator:
\begin{lem*}
\label{LemOracle}
Under the conditions of Theorem~\ref{ThmOracle}, we have the bound
\begin{equation*}
\|\boracle_S - \betastar_S\|_\infty \le c \sqrt{\frac{\log k}{n}},
\end{equation*}
and $\betahat_S = \boracle_S$.
\end{lem*}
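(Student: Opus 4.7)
}

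The argument naturally splits into (a) establishing the $\ell_\infty$-rate $\|\boracle_S - \betastar_S\|_\infty \lesssim \sqrt{\log k/n}$, and (b) deducing $\betahat_S = \boracle_S$.

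For (a), Lemma~\ref{LemConvex} makes the restricted loss $\Loss_n\big|_{\real^S}$ strongly convex on $\{\beta\in\real^S : \|\beta-\betastar\|_2\le r\}$, so $\boracle_S$ is the unique minimizer of the oracle program. Applying the argument of Theorem~\ref{ThmStationary} to the restricted (unpenalized) $k$-dimensional problem (where the effective ``ambient dimension'' in the log factor is $k$, not $p$), together with the sample-size condition $n \ge c_0 k^2 \log p$, places $\boracle_S$ strictly inside the oracle ball, so it satisfies the first-order condition $\nabla_S \Loss_n(\boracle_S)=0$. A mean-value expansion then gives $\boracle_S - \betastar_S = -\Qhat_{SS}^{-1}\,\nabla_S \Loss_n(\betastar)$, where $\Qhat_{SS}$ is the integrated Hessian on $S\times S$. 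I would then examine each coordinate separately: writing $e_j^T(\boracle_S-\betastar_S) \approx e_j^T \bigl(\E[\nabla^2\Loss_n(\betastar)]_{SS}\bigr)^{-1}\cdot \tfrac{1}{n}\sum_i \ell'(\epsilon_i)(x_i)_S$ and noting that, by the boundedness of $\ell'$ from Assumption~\ref{AsEllDeriv} and the sub-Gaussianity of the $x_i$'s, the right-hand side is a sum of $n$ independent mean-zero sub-Gaussian variables whose scale depends only on the $\ell_2$-norm of the fixed weight vector $e_j^T\E[\nabla^2\Loss_n(\betastar)]_{SS}^{-1}$, which is bounded uniformly in $j$ by $1/\lambda_{\min}(\Sigma_x) \cdot \order(1)$. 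A Hoeffding/Bernstein-type inequality and union bound over the $k$ coordinates of $S$ then yields the claimed $\order(\sqrt{\log k/n})$ bound; the scaling $n \succsim k^2\log p$ is what lets us replace $\Qhat_{SS}^{-1}$ by its population counterpart at negligible cost.

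For (b), the beta-min hypothesis $\betamin \ge C\sqrt{\log k/n} + \gamma\lambda$ combined with the $\ell_\infty$ bound of part (a) forces $|\boracle_{S,j}|\ge \gamma\lambda$ for every $j\in S$. Condition (vii) of Assumption~\ref{AsAmenable} then gives $\rho_\lambda'(\boracle_{S,j})=0$, so $\nabla_S\rho_\lambda(\boracle_S)=0$ and equivalently $\nabla_S q_\lambda(\boracle_S) = \lambda\,\sign(\boracle_S)$. Combining with $\nabla_S\Loss_n(\boracle_S)=0$ shows $\boracle_S$ is itself a stationary point of the restricted penalized program~\eqref{EqnRestricted}, the same program whose minimizer is $\betahat_S$. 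Subtracting the two stationarity conditions, taking the inner product with $\betahat_S - \boracle_S$, and invoking the restricted strong convexity from Lemma~\ref{LemConvex} on the left gives $\alpha\|\betahat_S-\boracle_S\|_2^2$ (with negligible slack from the $\tau k\log p/n$ term under the stated sample size), while on the right the $\mu$-weak convexity of $q_\lambda$ contributes $\le \mu\|\betahat_S-\boracle_S\|_2^2$ and the cross term $-\lambda\inprod{\sign(\betahat_S)-\sign(\boracle_S)}{\betahat_S-\boracle_S}$ is nonpositive by monotonicity of the sign map. Under the curvature condition this forces $\betahat_S=\boracle_S$.

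\paragraph{Main obstacle.}
The genuinely delicate step is the $\ell_\infty$ bound in part (a): the naive estimate $\|M^{-1}\|_\infty \le \sqrt{k}\,\|M^{-1}\|_2$ would concede a $\sqrt{k}$ factor and only yield $\|\boracle_S-\betastar_S\|_\infty \lesssim \sqrt{k\log k/n}$, which is insufficient to activate the beta-min condition unless $k = \order(1)$. Avoiding this loss requires the coordinate-wise decomposition above, treating $e_j^T\Qhat_{SS}^{-1}\nabla_S\Loss_n(\betastar)$ directly as a scalar weighted sum of sub-Gaussian summands rather than a product of operator-norm bounds, and carefully controlling the deviation of $\Qhat_{SS}$ from $\E[\nabla^2\Loss_n(\betastar)]_{SS}$ (which is what accounts for the $k^2$ in the sample-size assumption of Theorem~\ref{ThmOracle}). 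The oracle-identification step (b) is then a mostly mechanical consequence, with the only subtle point being bookkeeping of the constants so that the effective curvature of $\Loss_n+\rho_\lambda$ on the support survives combination with the weak-convexity parameter of $q_\lambda$.
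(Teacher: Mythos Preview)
Your proposal is correct and mirrors the paper's own proof: for (a), the paper likewise shows $\boracle_S$ is interior, writes $\boracle_S - \betastar_S = -(\Qhat^{\mathcal{O}}_{SS})^{-1}(\nabla\Loss_n(\betastar))_S$, replaces $\Qhat^{\mathcal{O}}_{SS}$ by the population Hessian $(\nabla^2\Loss(\betastar))_{SS}$ via an operator-norm deviation of order $\sqrt{k/n}$ (obtained through a covering argument on $\Ball_2^S(1)$ and a concentration bound for polynomials of sub-Gaussians), and then controls $\|(\nabla^2\Loss(\betastar))_{SS}^{-1}(\nabla\Loss_n(\betastar))_S\|_\infty$ coordinatewise exactly as you outline, the $n \ge k^2$ requirement absorbing the extra $\sqrt{k}$ from the deviation term. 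For (b), your comparison-of-stationarity-conditions argument is actually more explicit than the paper's presentation, which asserts $\betahat_S = \boracle_S$ in the lemma statement but details only the $\ell_\infty$ bound in the proof.
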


\begin{proof}
By the optimality of the oracle estimator, we have
\begin{equation}
\label{EqnLima}
\Loss_n(\boracle) \le \Loss_n(\betastar).
\end{equation}
Furthermore, $\Loss_n$ is strongly convex over the restricted region $S_r$ by Lemma~\ref{LemConvex}. Hence,
\begin{equation}
\label{EqnBean}
\Loss_n(\betastar) + \inprod{\nabla \Loss_n(\betastar)}{\boracle - \betastar} + \frac{\alpha}{4} \|\boracle - \betastar\|_2^2 \le \Loss_n(\boracle).
\end{equation}
Summing inequalities~\eqref{EqnLima} and~\eqref{EqnBean}, we obtain
\begin{align*}
\frac{\alpha}{4} \|\boracle - \betastar\|_2^2 & \le \inprod{\nabla \Loss_n(\betastar)}{\betastar - \boracle} \le \|\nabla \Loss_n(\betastar)\|_\infty \cdot \|\boracle - \betastar\|_1 \\
& \le \sqrt{k} \|\nabla \Loss_n(\betastar)\|_\infty \cdot \|\boracle - \betastar\|_2,
\end{align*}
implying that
\begin{equation*}
\|\boracle - \betastar\|_2 \le \frac{4\sqrt{k}}{\alpha} \|\nabla \Loss_n(\betastar)\|_\infty.
\end{equation*}
In particular, when
\begin{equation*}
\frac{4 \sqrt{k}}{\alpha} \|\nabla \Loss_n(\betastar)\|_\infty < r,
\end{equation*}
the oracle estimator $\boracle$ is in the interior point of the feasible region, implying in particular that
\begin{equation*}
\left(\nabla \Loss_n(\boracle)\right)_S = 0.
\end{equation*}
Hence, we have
\begin{equation*}
\left(\nabla \Loss_n(\boracle) - \nabla \Loss_n(\betastar)\right)_S + \left(\nabla \Loss_n(\betastar)\right)_S = 0,
\end{equation*}
or
\begin{equation*}
\Qhat^{\mathcal{O}}_{SS} (\boracle - \betastar)_S + \left(\nabla \Loss_n(\betastar)\right)_S = 0,
\end{equation*}
where
\begin{equation*}
\Qhat^{\mathcal{O}} \defn \int_0^1 \nabla^2 \Loss_n\left(\betastar + t(\boracle - \betastar)\right) dt = \left(\int_0^1 \ell''\left(x_i^T \big(\betastar + t(\boracle - \betastar)\big) - y_i\right) dt\right) \cdot x_ix_i^T.
\end{equation*}
This implies that
\begin{equation}
\label{EqnLochness}
\|\boracle_S - \betastar_S\|_\infty = \left\| (\Qhat^{\mathcal{O}})_{SS}^{-1} \left(\nabla \Loss_n(\betastar)\right)_S \right\|_\infty.
\end{equation}

Let $\Ball_2^S(1) \defn \{u: \|u\|_2 \le 1, \text{ and } \supp(u) \subseteq S\}$. For $v, w \in \Ball_2^S(1)$, consider the quantity
\begin{align*}
\Bigg|v^T \Bigg\{\Qhat^{\mathcal{O}}_{SS} - & \left(\nabla^2 \Loss_n(\betastar)\right)_{SS} \Bigg\}w\Bigg| \\
& = \left| \frac{1}{n} \sum_{i=1}^n \left\{\int_0^1 \left(\ell''\left(x_i^T \big(\betastar + t(\boracle - \betastar)\big) - y_i\right)- \ell''(x_i^T \betastar - y_i)\right) dt \right\}(x_i^T v) (x_i^T w) \right| \\
& \le \kappa_3 \cdot \frac{1}{n} \sum_{i=1}^n \int_0^1 \left(t \cdot |x_i^T (\boracle - \betastar)| \; dt \right) \cdot |x_i^T v| \cdot |x_i^T w| \\
& = \kappa_3 \cdot \frac{1}{n} \sum_{i=1}^n \int_0^1 |x_i^T (\boracle - \betastar)| \cdot |x_i^T v| \cdot |x_i^T w| \\
& \le \kappa_3 \|\boracle - \betastar\|_2 \cdot \sup_{\|u\|_2 = 1, \; \supp(u) \subseteq S} \left\{\frac{1}{n} \sum_{i=1}^n |x_i^T u| \cdot |x_i^T v| \cdot |x_i^T w| \right\},
\end{align*}
and denote $f(u, v, w) \defn \frac{1}{n} \sum_{i=1}^n |x_i^T u| \cdot |x_i^T v| \cdot |x_i^T w|$. Then
\begin{equation}
\label{EqnCroc}
\opnorm{\Qhat^{\mathcal{O}}_{SS} - \left(\nabla^2 \Loss_n(\betastar)\right)_{SS}}_2 \le \kappa_3 \|\boracle - \betastar\|_2 \cdot \sup_{u, v, w \in \Ball_2^S(1)} f(u,v,w).
\end{equation}
We now use a covering argument. Let $\scriptM$ denote a $\frac{1}{4}$-cover of $\Ball_2(1)$. By standard results on metric entropy, we may choose $\scriptM$ such that $|\scriptM| \le c^k$. For all triples $u, v, w \in \Ball_2^S(1)$, we may find $u', v', w' \in \scriptM$ such that
\begin{equation*}
\|u - u'\|_2, \; \|v - v'\|_2, \; \|w - w'\|_2 \le \frac{1}{4}.
\end{equation*}
Furthermore,
\begin{multline}
\label{EqnBrulee}
|f(u, v, w) - f(u', v', w')| \le |f(u, v, w) - f(u', v, w)| \\
+ |f(u', v, w) - f(u', v', w)| + |f(u', v', w) - f(u', v', w')|.
\end{multline}
Note that
\begin{align*}
|f(u, v, w) - f(u', v, w)| & = \left|\frac{1}{n} \sum_{i=1}^n \left(|x_i^T u| - |x_i^T u'|\right) \cdot |x_i^T v| \cdot |x_i^T w|\right| \\
& \le \frac{1}{n} \sum_{i=1}^n \bigg| |x_i^T u| - |x_i^T u'| \bigg| \cdot |x_i^T v| \cdot |x_i^T w| \\
& \le \frac{1}{n} \sum_{i=1}^n |x_i^T(u - u')| \cdot |x_i^T v| \cdot |x_i^T w| \\
& \le \|u - u'\|_2 \cdot \sup_{u, v, w \in \Ball_2^S(1)} f(u, v, w) \\
& \le \frac{1}{4} \cdot \sup_{u, v, w \in \Ball_2^S(1)} f(u, v, w),
\end{align*}
and we may bound the other two terms in the expansion~\eqref{EqnBrulee} analogously. Hence,
\begin{equation*}
\sup_{u, v, w \in \Ball_2^S(1)} f(u, v, w) \le \max_{u', v', w' \in \scriptM} f(u', v', w') + \frac{3}{4} \cdot \sup_{u, v, w \in \Ball_2^S(1)} f(u, v, w),
\end{equation*}
implying that
\begin{equation}
\label{EqnAlligator}
\sup_{u, v, w \in \Ball_2^S(1)} f(u, v, w) \le 4 \cdot \max_{u, v, w \in \scriptM} f(u, v, w),
\end{equation}
and it remains to bound the right-hand side of inequality~\eqref{EqnAlligator}. For a fixed triple $u, v, w \in \scriptM$, we apply the arithmetic mean-geometric mean inequality, to obtain
\begin{equation*}
|x_i^T u| \cdot |x_i^T v| \cdot |x_i^T w| \le \frac{1}{3} \left(|x_i^T u|^3 + |x_i^T v|^3 + |x_i^T w|^3\right).
\end{equation*}
Then
\begin{equation*}
f(u, v, w) \le \frac{1}{3} \left(\frac{1}{n} \sum_{i=1}^n |x_i^T u|^3 + \frac{1}{n} \sum_{i=1}^n |x_i^T v|^3 + \frac{1}{n} \sum_{i=1}^n |x_i^T w|^3\right).
\end{equation*}
Note that
\begin{equation*}
\E\left[f(u, v, w)\right] = \frac{1}{3} \bigg(\E\left[|x_i^T u|^3\right] + \E\left[|x_i^T v|^3\right] + \E\left[|x_i^T w|^3\right]\bigg) \le c \sigma_x^3,
\end{equation*}
using the sub-Gaussian assumption on the $x_i$'s. Finally, we invoke a concentration bound on $f(u, v, w)$. We use a result from Adamczak and Wolff~\cite{AdaWol14}. Theorem 1.4 of that paper gives a concentration result for i.i.d.\ averages of polynomials of sub-Gaussian variables, implying in particular that
\begin{equation*}
\mprob\left(\left|f(u,v,w) - \E\left[f(u,v,w)\right]\right| \ge t\right) \le c_1 \exp\left(\min\left\{\frac{nt^2}{\sigma_x^6}, \; \frac{(nt)^{2/3}}{\sigma_x^2}\right\}\right), \qquad \forall t > 0.
\end{equation*}
Setting $t = c \sigma_x^3 \sqrt{\frac{k}{n}}$ and taking a union bound over all $u, v, w \in \scriptM$, we then conclude from inequality~\eqref{EqnAlligator} that
\begin{equation*}
\sup_{u, v, w \in \Ball_2^S(1)} f(u, v, w) \le c\sigma_x^3 \sqrt{\frac{k}{n}},
\end{equation*}
with probability at least $1 - c_1' \exp(-c_2' k)$. Plugging back into inequality~\eqref{EqnCroc} then gives
\begin{equation}
\label{EqnFlour}
\opnorm{\Qhat^{\mathcal{O}}_{SS} - \left(\nabla^2 \Loss_n(\betastar)\right)_{SS}}_2 \le c \kappa_3 \sigma_x^3 \|\boracle - \betastar\|_2 \cdot \sqrt{\frac{k}{n}} \le c\kappa_3 \sigma_x^3 r \sqrt{\frac{k}{n}}.
\end{equation}
We further note that
\begin{equation*}
v^T \left\{\left(\nabla^2 \Loss_n(\betastar)\right)_{SS} \right\} w  = \frac{1}{n} \sum_{i=1}^n \ell''(x_i^T \betastar - y_i) \cdot (x_i^T v) \cdot (x_i^T w),
\end{equation*}
which is an i.i.d.\ average of products of sub-Gaussians (since $\ell''$ is bounded), so an even easier covering argument establishes concentration to $v^T \left\{\left(\nabla^2 \Loss(\betastar)\right)_{SS} \right\} w$. Hence,
\begin{equation}
\label{EqnSugar}
\opnorm{\Qhat^{\mathcal{O}}_{SS} - \left(\nabla^2 \Loss(\betastar)\right)_{SS}}_2 \le c' \sqrt{\frac{k}{n}}.
\end{equation}
Combining inequalities~\eqref{EqnFlour} and~\eqref{EqnSugar} gives
\begin{equation*}
\opnorm{\Qhat^{\mathcal{O}}_{SS} - \left(\nabla^2 \Loss(\betastar)\right)_{SS}}_2 \le c'' \sqrt{\frac{k}{n}}.
\end{equation*}
Then by a simple matrix inversion relation (cf.\ Lemma 12 in Loh and Wainwright~\cite{LohWai14}), we have
\begin{equation*}
\opnorm{(\Qhat^{\mathcal{O}}_{SS})^{-1} - \left(\nabla^2 \Loss(\betastar)\right)_{SS}^{-1}}_2 \le c'' \sqrt{\frac{k}{n}},
\end{equation*}
as well. Returning to equation~\eqref{EqnLochness}, we see that
\begin{align*}
& \|\boracle_S - \betastar_S\|_\infty \\
& \qquad \le \left\|\left\{(\Qhat^{\mathcal{O}}_{SS})^{-1} - \left(\nabla^2 \Loss(\betastar)\right)_{SS}^{-1}\right\} \left(\nabla \Loss_n(\betastar)\right)_S\right\|_\infty + \left\|\left(\nabla^2 \Loss(\betastar)\right)_{SS}^{-1} \left(\nabla \Loss_n(\betastar)\right)_S\right\|_\infty \\
& \qquad \le c'' \sqrt{\frac{k}{n}} \cdot \sqrt{k} \left\|\left(\nabla \Loss_n(\betastar)\right)_S\right\|_\infty + \left\|\left(\nabla^2 \Loss(\betastar)\right)_{SS}^{-1} \left(\nabla \Loss_n(\betastar)\right)_S\right\|_\infty \\
& \qquad \le C \sqrt{\frac{\log k}{n}},
\end{align*}
assuming $n \ge k^2$. This is the desired result.

\end{proof}

In particular, Lemma~\ref{LemOracle} implies when $\betamin \ge C \sqrt{\frac{\log k}{n}} + \gamma \lambda$, we have
\begin{equation*}
\nabla q_\lambda(\betahat_S) = \lambda \sign(\betahat_S) = \lambda \zhat_S.
\end{equation*}
Furthermore, the selection property implies $\nabla q_\lambda(\betahat_{S^c}) = 0$. Plugging these results into equation~\eqref{EqnZeroSubBlock} and performing some algebra, we conclude that
\begin{equation}
\label{EqnRabbit}
\zhat_{S^c} = \frac{1}{\lambda} \Big\{\Qhat_{S^cS}
(\Qhat_{SS})^{-1} \left(\nabla
\Loss_n(\betastar)\right)_S - \left(\nabla
\Loss_n(\betastar)\right)_{S^c} \Big\},
\end{equation}
so
\begin{equation}
\label{EqnCotton}
\|\zhat_{S^c}\|_\infty \le \frac{1}{\lambda} \left\|\Qhat_{S^cS} (\Qhat_{SS})^{-1} \left(\nabla \Loss_n(\betastar)\right)_S \right\|_\infty + \frac{1}{\lambda} \|\left(\nabla \Loss_n(\betastar)\right)_{S^c}\|_\infty.
\end{equation}
We now use similar arguments to those employed in the proof of Lemma~\ref{LemOracle} to control the terms in inequality~\eqref{EqnCotton}. Note that $\|\nabla \Loss_n(\betastar)\|_\infty \le c \sqrt{\frac{\log p}{n}}$ by assumption, so we can focus on the first term. We have
\begin{align*}
\Big| v^T \Big(\Qhat - \nabla^2 & \Loss_n(\betastar) \Big) w\Big| \\
& = \left|\frac{1}{n} \sum_{i=1}^n \int_0^1 \left(\ell''\left(x_i^T\big(\betahat + t(\betahat - \betastar)\big) - y_i\right) - \ell''(x_i^T \betastar - y_i)\right) dt \cdot (x_i^T v) (x_i^T w)\right| \\
& \le \kappa_3 \cdot \frac{1}{n} \sum_{i=1}^n \int_0^1 \left(t \cdot |x_i^T (\betahat - \betastar)| \right) dt \cdot |x_i^T v| \cdot |x_i^T w| \\
& \le \kappa_3 \|\betahat - \betastar\|_2 \cdot \sup_{u \in \Ball_2^S(1)} \left\{\frac{1}{n} \sum_{i=1}^n |x_i^T u| \cdot |x_i^T v| \cdot |x_i^T w| \right\} \\
& \le \kappa_3 r \cdot \sup_{u \in \Ball_2^S(1)} \left\{\frac{1}{n} \sum_{i=1}^n |x_i^T u| \cdot |x_i^T v| \cdot |x_i^T w|\right\}.
\end{align*}
By essentially the same bounding and covering argument as before, we conclude that
\begin{equation*}
\opnorm{\Qhat_{SS} - \left(\nabla^2 \Loss(\betastar)\right)_{SS}}_2 \le c \sqrt{\frac{k}{n}},
\end{equation*}
and
\begin{equation}
\label{EqnYellow}
\opnorm{(\Qhat_{SS})^{-1} - \left(\nabla^2 \Loss(\betastar)\right)_{SS}^{-1}}_2 \le c' \sqrt{\frac{k}{n}},
\end{equation}
with probability at least $1 - c_1 \exp(- c_2 k)$. Furthermore, we may show that
\begin{equation}
\label{EqnGreen}
\max_{j \in S^c} \left\|e_j^T \left\{\Qhat_{S^cS} - \left(\nabla^2 \Loss(\betastar)\right)_{S^cS} \right\} \right\|_2 \le c''' \sqrt{\frac{k + \log p}{n}},
\end{equation}
with probability at least $1 - c_1' \exp(-c_2' \min\{k, \log p\})$ by a similar argument, this time taking a union bound over $j \in S^c$ rather than all unit vectors for one of the coordinates in the covering. Defining
\begin{align*}
\delta_1 \defn \Qhat_{S^cS} - \left(\nabla^2 \Loss(\betastar)\right)_{S^cS}, \qquad \text{and} \qquad \delta_2 \defn (\Qhat_{SS})^{-1} - \left(\nabla^2 \Loss(\betastar)\right)_{SS}^{-1},
\end{align*}
we may conclude that
\begin{align*}
& \left\|\Qhat_{S^cS} (\Qhat_{SS})^{-1} \left(\nabla \Loss_n(\betastar)\right)_S \right\|_\infty \le \left\|\delta_1 \delta_2 \left(\nabla \Loss_n(\betastar)\right)_S\right\|_\infty + \left\|\delta_1 \left(\nabla^2 \Loss(\betastar)\right)_{SS}^{-1} \left(\nabla \Loss_n(\betastar)\right)_S\right\|_\infty \\
& \qquad \qquad + \left\|\left(\nabla^2 \Loss(\betastar)\right)_{S^cS} \delta_2 \left(\nabla \Loss_n(\betastar)\right)_S\right\|_\infty \\
& \qquad \le \max_{j \in S^c} \|e_j^T \delta_1\|_2 \cdot \opnorm{\delta_2}_2 \cdot \sqrt{k} \|\nabla \Loss_n(\betastar)\|_\infty + \max_{j \in S^c} \|e_j^T \delta_1\|_2 \cdot \sqrt{k} \left\|\left(\nabla^2 \Loss(\betastar)\right)_{SS}^{-1} \left(\nabla \Loss_n(\betastar)\right)_S\right\|_\infty \\
& \qquad \qquad + \opnorm{\left(\nabla^2 \Loss(\betastar)\right)_{S^cS}}_2 \cdot \opnorm{\delta_2}_2 \cdot \sqrt{k} \left\|\nabla \Loss_n(\betastar)\right\|_\infty \\
& \qquad \le C \sqrt{\frac{\log p}{n}},
\end{align*}
with probability at least $1 - c_1' \exp(-c_2' \min\{k, \log p\})$, assuming the scaling $n \ge \max\{k^2, k\log p\}$ and using the inqualities~\eqref{EqnYellow} and~\eqref{EqnGreen} above. In particular, for $\lambda \ge C' \sqrt{\frac{\log p}{n}}$, we conclude at last that the strict dual feasibility condition $\|\zhat_{S^c}\|_\infty < 1$ holds, completing step (ii) of the PDW construction. \\

\paragraph{Step (iii):}

Finally, we establish that $\betahat = (\betahat_S, 0_{S^c})$ is a local minimum of the full program~\eqref{EqnGeneral} and in fact, all stationary points of the program must take this form. A classical result by Fletcher and Watson~\cite{FleWat80} gives sufficient conditions for a point to be a local minimum of a norm-regularized program. Rather than repeating the details here, we refer the reader to the argument provided in the proof of Theorem 1 in Loh and Wainwright~\cite{LohWai14}, which may be applied verbatim to establish that $\betahat$ is a local minimum. Now suppose $\betatil$ is a stationary point of the program~\eqref{EqnGeneral} satisfying $\|\betatil - \betastar\|_2 \le r$. By the RSC condition~\eqref{EqnLocalRSC} applied to the pair $(\betatil, \betahat)$, we have
\begin{equation}
\label{EqnBarneys}
\alpha \|\betatil - \betahat\|_2^2 - \tau \frac{\log p}{n} \|\betatil - \betahat\|_1^2 \le \inprod{\nabla \Loss_n(\betatil) - \nabla \Loss_n(\betahat)}{\betatil - \betahat}.
\end{equation}
By the convexity of $\frac{\mu}{2} \|\beta\|_2^2 - q_\lambda(\beta)$, we also have
\begin{equation}
\label{EqnCrepevine}
\inprod{\nabla q_\lambda(\betatil) - \nabla q_\lambda(\betahat)}{\betatil - \betahat} \le \mu \|\betatil - \betahat\|_2^2.
\end{equation}
Finally, the first-order optimality condition applied to $\betatil$ gives
\begin{equation}
\label{EqnCheeseboard}
0 \le \inprod{\nabla \Loss_n(\betatil) - \nabla q_\lambda(\betatil)}{\betahat - \betatil} + \lambda \cdot \inprod{\ztil}{\betahat - \betatil},
\end{equation}
where $\ztil \in \partial \|\betatil\|_1$. Summing the inequalities~\eqref{EqnBarneys}, \eqref{EqnCrepevine}, and~\eqref{EqnCheeseboard}, we obtain
\begin{equation}
\label{EqnUdupi}
(\alpha - \mu) \|\betatil - \betahat\|_2^2 - \tau \frac{\log p}{n} \|\betatil - \betahat\|_1^2 \le \inprod{\nabla q_\lambda(\betahat) - \nabla \Loss_n(\betahat)}{\betatil - \betahat} + \lambda \cdot \inprod{\ztil}{\betahat - \betatil}.
\end{equation}
Recall that since $\betahat$ is an interior point, we have the zero-subgradient condition
\begin{equation*}
\nabla \Loss_n(\betahat) - \nabla q_\lambda(\betahat) + \lambda \zhat = 0.
\end{equation*}
Combining this with inequality~\eqref{EqnUdupi}, we obtain
\begin{align}
\label{EqnTibet}
(\alpha - \mu) \|\betatil - \betahat\|_2^2 - \tau \frac{\log p}{n} \|\betatil - \betahat\|_1^2 & \le \lambda \cdot \inprod{\zhat}{\betatil - \betahat} + \lambda \cdot \inprod{\ztil}{\betahat - \betatil} \notag \\
& = \lambda \cdot \inprod{\zhat}{\betatil} - \lambda \|\betahat\|_1 + \lambda \cdot \inprod{\ztil}{\betahat} - \lambda \|\betatil\|_1 \notag \\
& \le \lambda \cdot \inprod{\zhat}{\betatil} - \lambda \|\betatil\|_1.
\end{align}

We now show the following lemma:
\begin{lem*}
\label{LemTilCone}
Suppose $\delta > 0$ is such that $\|\zhat_{S^c}\|_\infty \le 1 - \delta$. Then for $\lambda \ge \frac{4R\tau}{\log p}{\delta n}$, we have
\begin{equation*}
\|\betatil - \betahat\|_1 \le \left(\frac{4}{\delta} + 2\right) \sqrt{k} \|\betatil - \betahat\|_2.
\end{equation*}
\end{lem*}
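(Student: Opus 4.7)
The target is a cone-type inequality on $\nu \defn \betatil - \betahat$. The natural starting point is inequality~\eqref{EqnTibet}, which already bounds the left-hand side involving $\|\nu\|_2^2$ and $\|\nu\|_1^2$ in terms of $\lambda \cdot \inprod{\zhat}{\betatil} - \lambda \|\betatil\|_1$. The first step is to convert that right-hand side into a penalty on $\|\nu_{S^c}\|_1$ using the strict dual feasibility hypothesis. Since $\betahat$ is supported on $S$, we have $\betatil_{S^c} = \nu_{S^c}$, so
\begin{equation*}
\inprod{\zhat}{\betatil} = \inprod{\zhat_S}{\betatil_S} + \inprod{\zhat_{S^c}}{\nu_{S^c}} \le \|\betatil_S\|_1 + (1-\delta)\|\nu_{S^c}\|_1,
\end{equation*}
while $\|\betatil\|_1 = \|\betatil_S\|_1 + \|\nu_{S^c}\|_1$. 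Substituting into \eqref{EqnTibet} collapses the $\|\betatil_S\|_1$ terms and leaves
\begin{equation*}
(\alpha - \mu)\|\nu\|_2^2 - \tau \tfrac{\log p}{n} \|\nu\|_1^2 \le -\lambda \delta \|\nu_{S^c}\|_1,
\end{equation*}
which after dropping the nonnegative quadratic term yields $\lambda \delta \|\nu_{S^c}\|_1 \le \tau (\log p / n)\|\nu\|_1^2$.

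The second step is to tame the quadratic $\|\nu\|_1^2$ via the side constraint, which is the one place the hypothesis $\|\beta\|_1 \le R$ is essential: since both $\betatil$ and $\betahat$ are feasible, $\|\nu\|_1 \le 2R$, and so $\|\nu\|_1^2 \le 2R\|\nu\|_1 = 2R(\|\nu_S\|_1 + \|\nu_{S^c}\|_1)$. Plugging in and using the hypothesized scaling $\lambda \ge \frac{4R\tau \log p}{\delta n}$, the coefficient $2R\tau \log p /(n \lambda \delta)$ is at most $1/2$, so
\begin{equation*}
\|\nu_{S^c}\|_1 \le \tfrac{1}{2}\bigl(\|\nu_S\|_1 + \|\nu_{S^c}\|_1\bigr),
\end{equation*}
which rearranges to the cone condition $\|\nu_{S^c}\|_1 \le \|\nu_S\|_1$ (giving $(4/\delta + 2)$ slack to spare, which explains the form of the stated constant).

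The last step is then routine: writing $\|\nu\|_1 = \|\nu_S\|_1 + \|\nu_{S^c}\|_1$ and applying the Cauchy--Schwarz bound $\|\nu_S\|_1 \le \sqrt{k}\|\nu_S\|_2 \le \sqrt{k}\|\nu\|_2$ delivers $\|\nu\|_1 \le (4/\delta + 2)\sqrt{k}\|\nu\|_2$, which is the claim. The main obstacle, such as it is, lies in step two: the quadratic $\|\nu\|_1^2$ coming from the local RSC condition is not a priori controllable by $\|\nu\|_1$ alone, and only the feasibility constraint $\|\beta\|_1 \le R$ lets us linearize it; this is precisely why the lemma's $\lambda$ threshold scales with $R$. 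Once that linearization is in place, the cone argument is identical in spirit to the standard Lasso cone inequality, with $\delta$ playing the role of the margin of strict dual feasibility.
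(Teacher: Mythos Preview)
Your argument is correct and follows the natural route: start from inequality~\eqref{EqnTibet}, use $\supp(\betahat)\subseteq S$ together with strict dual feasibility to bound the right-hand side by $-\lambda\delta\|\nu_{S^c}\|_1$, drop the nonnegative $(\alpha-\mu)\|\nu\|_2^2$ term, linearize $\|\nu\|_1^2\le 2R\|\nu\|_1$ via feasibility, and absorb the resulting coefficient using the hypothesis on $\lambda$. The paper itself does not give a proof, referring instead to Lemma~7 of~\cite{LohWai14}; your derivation is the standard one in that line of work.

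Two remarks. First, your proof actually yields the sharper bound $\|\nu\|_1\le 2\sqrt{k}\|\nu\|_2$, independent of $\delta$; the constant $\tfrac{4}{\delta}+2$ in the stated lemma is inherited verbatim from~\cite{LohWai14}, where the ambient assumptions are slightly different, and is not sharp in the present setting. Second, when you ``drop the nonnegative quadratic term'' you are implicitly using $\alpha\ge\mu$. This is not among the stated hypotheses of the lemma (Theorem~\ref{ThmStationary} only assumes $\tfrac{3}{4}\mu<\alpha$), but it is in fact used immediately after the lemma in the proof of Theorem~\ref{ThmOracle}, where the sample-size requirement $n\ge \tfrac{2\tau}{\alpha-\mu}\bigl(\tfrac{4}{\delta}+2\bigr)^2 k\log p$ appears. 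So the assumption is consistent with the surrounding argument, but it would be cleaner to flag it explicitly.
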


\begin{proof}
This is identical to the proof of Lemma 7 in Loh and Wainwright~\cite{LohWai14}.
\end{proof}

Using Lemma~\ref{LemTilCone} to bound the left-hand side of inequality~\eqref{EqnTibet}, we then obtain
\begin{equation*}
\left(\alpha - \mu - \tau \frac{k \log p}{n} \left(\frac{4}{\delta} + 2\right)^2\right) \|\betatil - \betahat\|_2^2 \le \lambda \cdot \inprod{\zhat}{\betatil} - \lambda \|\betatil\|_1,
\end{equation*}
so if $n \ge \frac{2 \tau}{\alpha - \mu} \left(\frac{4}{\delta} + 2\right)^2 k \log p$, this implies
\begin{equation*}
0 \le \lambda \cdot \inprod{\zhat}{\betatil} - \lambda \|\betatil\|_1.
\end{equation*}
At the same time, H\"{o}lder's inequality gives
\begin{equation*}
\lambda \cdot \inprod{\zhat}{\betatil} - \lambda \|\betatil\|_1 \le \lambda \cdot \|\zhat\|_\infty \|\betatil\|_1 - \lambda \|\betatil\|_1 \le \lambda \|\betatil\|_1 - \lambda \|\betatil\|_1 = 0.
\end{equation*}
Hence, we must have $\inprod{\zhat}{\betatil} = \|\betatil\|_1$. Since $\|\zhat_{S^c}\|_\infty < 1$ by assumption, this means that $\supp(\betatil) \subseteq S$, as wanted.


\subsection{Proof of Theorem~\ref{ThmOptimization}}
\label{SecThmOptimization}

We derive the following variants of Lemmas 1 and 2 in Loh and Wainwright~\cite{LohWai13}; the remainder of the argument is exactly the same as in that paper, so we do not repeat it here. The reason why we need to revise the two lemmas is that the proofs in Loh and Wainwright~\cite{LohWai13} require the statement of the RSC condition in that paper, which also provides control on the behavior of $\Loss_n$ outside the local region.

\begin{lem*}
\label{LemBasin}
Under the conditions of the theorem, we have
\begin{equation*}
\|\beta^t - \betahat\|_2 \le \frac{r}{2}, \qquad \forall t \ge 0.
\end{equation*}
\end{lem*}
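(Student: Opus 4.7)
The plan is induction on $t$, with the inductive invariant that $\beta^t$ stays in the local RSC region, $\|\beta^t - \betastar\|_2 \le r$; combined with $\|\betahat - \betastar\|_2 \le r/2$, this will keep every $(\beta^t, \betahat)$ pair in the domain of~\eqref{EqnRSC2}, and will in fact be tight enough to sharpen the distance to $\betahat$ to the claimed $r/2$. The base case is direct from the initialization $\|\beta^0 - \betastar\|_2 \le r/2$.

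For the inductive step I would chain two standard proximal-gradient estimates. The first is a descent-type upper bound: starting from the optimality of $\beta^{t+1}$ in the proximal minimization~\eqref{EqnIterates} with $\betahat$ inserted as a candidate, and applying the RSM condition~\eqref{EqnRSM} (which transfers from $\Loss_n$ to $\Lossbar_n = \Loss_n - q_\lambda$ because $q_\lambda$ is convex), the assumption $\eta \ge 2\alpha''$ absorbs the $\|\beta^{t+1} - \beta^t\|_2^2$ cross term and yields
\[
\Lossbar_n(\beta^{t+1}) + \lambda\|\beta^{t+1}\|_1 \le \Lossbar_n(\beta^t) + \left\langle \nabla\Lossbar_n(\beta^t), \betahat - \beta^t\right\rangle + \tfrac{\eta}{2}\|\betahat - \beta^t\|_2^2 + \lambda\|\betahat\|_1 + \tau''\tfrac{\log p}{n}\|\beta^{t+1} - \beta^t\|_1^2.
\]
The second is a curvature lower bound: local RSC~\eqref{EqnRSC2} for $\Loss_n$ on the pair $(\beta^{t+1}, \betahat)$, combined with the $\mu$-amenability of $\rho_\lambda$ (i.e., the convexity of $\tfrac{\mu}{2}\|\cdot\|_2^2 - q_\lambda$) and the first-order optimality of $\betahat$, gives
\[
\Lossbar_n(\beta^{t+1}) + \lambda\|\beta^{t+1}\|_1 \ge \Lossbar_n(\betahat) + \lambda\|\betahat\|_1 + \left(\alpha - \tfrac{\mu}{2}\right)\|\beta^{t+1} - \betahat\|_2^2 - \tau'\tfrac{\log p}{n}\|\beta^{t+1} - \betahat\|_1^2.
\]
Applying RSC a second time, now on the pair $(\beta^t, \betahat)$, lets me replace $\Lossbar_n(\beta^t) + \langle \nabla\Lossbar_n(\beta^t), \betahat - \beta^t\rangle$ by an expression no larger than $\Lossbar_n(\betahat) - (\alpha - \tfrac{\mu}{2})\|\beta^t - \betahat\|_2^2 + \tau'\tfrac{\log p}{n}\|\beta^t - \betahat\|_1^2$.

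Chaining these three estimates produces a one-step recursion
\[
\|\beta^{t+1} - \betahat\|_2^2 \le \kappa\,\|\beta^t - \betahat\|_2^2 + C\,\tfrac{\log p}{n}\bigl(\tau'\|\beta^t - \betahat\|_1^2 + \tau''\|\beta^{t+1} - \beta^t\|_1^2 + \tau'\|\beta^{t+1} - \betahat\|_1^2\bigr),
\]
with a contraction factor $\kappa \in (0,1)$ whose positivity follows from $\mu < 2\alpha$ and $\eta \ge 2\alpha''$. Since both $\beta^t$ and $\betahat$ satisfy $\|\cdot\|_1 \le R$, every $\ell_1$-difference on the right is at most $2R$, and the sample-size hypothesis~\eqref{EqnScaling} is tailored precisely so that $C(\tau' + \tau'')\tfrac{\log p}{n}(2R)^2 \le (1-\kappa)(r/2)^2$. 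This forces $\|\beta^{t+1} - \betahat\|_2 \le r/2$ from $\|\beta^t - \betahat\|_2 \le r/2$, closing the induction.

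The main obstacle is the $\ell_1$-slack that comes bundled with local RSC and RSM: because $\Loss_n$ may be wildly nonconvex outside the local region, the $\tfrac{\log p}{n}\|\cdot\|_1^2$ terms do not vanish and could in principle eject iterates from the basin even when the $\ell_2$-part of the recursion contracts. The feasibility constraint $\|\beta\|_1 \le R$ together with the calibration~\eqref{EqnScaling} is exactly what absorbs these slack terms into a strict fraction of $(r/2)^2$. A secondary delicacy is that $\Lossbar_n$ itself is not locally strongly convex (since $-q_\lambda$ contributes curvature of the wrong sign), which is why one must separately apply local RSC to $\Loss_n$ and $\mu$-amenability to $q_\lambda$, and re-combine them only at the end.
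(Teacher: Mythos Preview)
Your overall strategy—induction on $t$, combining proximal-step optimality with RSM for an upper bound and local RSC with $\mu$-amenability for a lower bound, then absorbing the $\ell_1$-slack via feasibility and the scaling~\eqref{EqnScaling}—matches the paper's approach closely, and the ingredients you list are the right ones.

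There is, however, a genuine circularity in the way you order the steps. Your ``second estimate'' invokes the local RSC condition~\eqref{EqnRSC2} on the pair $(\beta^{t+1},\betahat)$. But RSC is only assumed for points within $\ell_2$-radius $r$ of $\betastar$, and at that point in the induction you have no control whatsoever on $\|\beta^{t+1}-\betastar\|_2$: the inductive hypothesis only covers $\beta^t$, and the proximal update could in principle move $\beta^{t+1}$ outside the basin. So you are using the conclusion (that $\beta^{t+1}$ stays in the region) to prove itself.

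The paper resolves this by a two-stage bootstrap within the inductive step. First, using only RSC on $(\beta^t,\betahat)$ (valid by induction), RSM on $(\beta^{t+1},\beta^t)$ (global), optimality of the proximal step, and the fact that $\betahat$ is a global minimum over the local ball (so the objective gap is nonnegative), one obtains a \emph{non-contracting} one-step bound
\[
\|\beta^{t+1}-\betahat\|_2^2 \le \|\beta^t-\betahat\|_2^2 + \frac{8(\tau'+\tau'')}{\eta}\cdot\frac{R^2\log p}{n},
\]
which under the sample-size scaling already places $\beta^{t+1}$ within the local region. Only then is it legitimate to apply RSC on $(\beta^{t+1},\betahat)$, and this second application is what produces the contraction factor and sharpens the bound back down to $r/2$. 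Your chained recursion with $\kappa<1$ skips directly to the second stage; you need to insert the first-stage crude bound before it. The paper's remark after Theorem~\ref{ThmOptimization} flags exactly this point as ``the hard work'' of the lemma.
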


\begin{proof}

We induct on the iteration number $t$. Note that the base case, $t = 0$, holds by assumption. Suppose $t \ge 0$ is such that $\|\beta^t - \betahat\|_2 \le \frac{r}{2}$; we will show that $\|\beta^{t+1} - \betahat\|_2 \le \frac{r}{2}$, as well.

By the RSC condition~\eqref{EqnRSC2}, we have
\begin{equation}
\label{EqnCrumbCake}
\alpha' \|\beta^t - \betahat\|_2^2 - \tau' \frac{\log p}{n} \|\beta^t - \betahat\|_1^2 \le \Loss_n(\betahat) - \Loss_n(\beta^t) - \inprod{\nabla \Loss_n(\beta^t)}{\betahat - \beta^t}.
\end{equation}
Furthermore, since $\frac{\mu}{2} \|\beta\|_2^2 - q_\lambda(\beta)$ is convex by the $\mu$-amenability of $\rho_\lambda$, combining inequality~\eqref{EqnQConcave} with $(\beta_1, \beta_2) = (\beta^t, \betahat)$ and inequality~\eqref{EqnCrumbCake} and the inequality
\begin{equation*}
\|\beta^{t+1}\|_1 + \inprod{\sign(\beta^{t+1})}{\betahat - \beta^{t+1}} \le \|\betahat\|_1
\end{equation*}
implies that
\begin{multline*}
\Loss_n(\beta^t) + \inprod{\nabla \Loss_n(\beta^t) - \nabla q_\lambda(\beta^t)}{\betahat - \beta^t} - q_\lambda(\beta^t) + \lambda \|\beta^{t+1}\|_1 + \lambda \inprod{\sign(\beta^{t+1})}{\betahat - \beta^{t+1}} \\
+ \left(\alpha' - \frac{\mu}{2}\right) \|\beta^t - \betahat\|_2^2 - \tau' \frac{\log p}{n} \|\beta^t - \betahat\|_1^2 \le \Loss_n(\betahat) - q_\lambda(\betahat) + \lambda \|\betahat\|_1,
\end{multline*}
so
\begin{multline}
\label{EqnCaramel}
\Lossbar_n(\beta^t) + \inprod{\nabla \Lossbar_n(\beta^t)}{\betahat - \beta^t} + \lambda \|\beta^{t+1}\|_1 + \lambda \inprod{\sign(\beta^{t+1})}{\betahat - \beta^{t+1}} - \tau' \frac{\log p}{n} \|\beta^t - \betahat\|_1^2 \\
\le \Lossbar_n(\betahat) + \lambda \|\betahat\|_1.
\end{multline}
By the RSM condition~\eqref{EqnRSM}, we have
\begin{equation*}
\Loss_n(\beta^{t+1}) - \Loss_n(\beta^t) - \inprod{\nabla \Loss_n(\beta^t)}{\beta^{t+1} - \beta^t} \le \alpha'' \|\beta^{t+1} - \beta^t\|_2^2 + \tau'' \frac{\log p}{n} \|\beta^{t+1} - \beta^t\|_1^2,
\end{equation*}
and combined with the convexity of $q_\lambda$, we have
\begin{equation}
\label{EqnSalty}
\Lossbar_n(\beta^{t+1}) - \Lossbar_n(\beta^t) - \inprod{\nabla \Lossbar_n(\beta^t)}{\beta^{t+1} - \beta^t} \le \alpha'' \|\beta^{t+1} - \beta^t\|_2^2 + \tau'' \frac{\log p}{n} \|\beta^{t+1} - \beta^t\|_1^2.
\end{equation}
Combining inequalities~\eqref{EqnCaramel} and~\eqref{EqnSalty} then gives
\begin{multline}
\label{EqnBrookie}
\left(\Lossbar_n(\beta^{t+1}) + \lambda \|\beta^{t+1}\|_1\right) - \left(\Lossbar_n(\betahat) + \lambda \|\betahat\|_1\right) \\
\le \inprod{\nabla \Lossbar_n(\beta^t)}{\beta^{t+1} - \betahat} + \lambda \inprod{\sign(\beta^{t+1})}{\beta^{t+1} - \betahat} + \alpha'' \|\beta^{t+1} - \beta^t\|_2^2 + 4R^2 (\tau' + \tau'') \frac{\log p}{n},
\end{multline}
using the fact that $\|\beta^{t+1} - \beta^t\|_1, \; \|\beta^t - \betahat\|_1 \le 2R$, by feasibility of each point. Note that the left-hand side of inequality~\eqref{EqnBrookie} is lower-bounded by 0, since $\betahat$ is a global optimum. Finally, note that from the first-order optimality condition on equation~\eqref{EqnIterates}, we have
\begin{equation}
\label{EqnFOC}
\inprod{\nabla \Lossbar_n(\beta^t) + \eta (\beta^{t+1} - \beta^t) + \lambda \sign(\beta^{t+1})}{\beta^{t+1} - \betahat} \le 0.
\end{equation}
Combining inequality~\eqref{EqnFOC} with inequality~\eqref{EqnBrookie} then gives
\begin{align}
\label{EqnIceCream}
0 & \le \left(\Lossbar_n(\beta^{t+1}) + \lambda \|\beta^{t+1}\|_1\right) - \left(\Lossbar_n(\betahat) + \lambda \|\betahat\|_1\right) \notag \\
& \le \alpha'' \|\beta^{t+1} - \beta^t\|_2^2 + (\tau' + \tau'') \frac{4R^2 \log p}{n} - \eta \inprod{\beta^{t+1} - \beta^t}{\beta^{t+1} - \betahat} \notag \\
& = \left(\alpha'' - \frac{\eta}{2}\right) \|\beta^{t+1} - \beta^t\|_2^2 - \frac{\eta}{2} \|\beta^{t+1} - \betahat\|_2^2 + \frac{\eta}{2} \|\beta^t - \betahat\|_2^2 + (\tau' + \tau'') \frac{4R^2 \log p}{n} \notag \\
& \le \frac{\eta}{2} \|\beta^t - \betahat\|_2^2 - \frac{\eta}{2} \|\beta^{t+1} - \betahat\|_2^2 + (\tau' + \tau'') \frac{4R^2 \log p}{n},
\end{align}
using the assumption that $\eta \ge 2 \alpha''$. Hence,
\begin{equation*}
\|\beta^{t+1} - \betahat\|_2^2 \le \|\beta^t - \betahat\|_2^2 + \frac{8(\tau' + \tau'')}{\eta} \cdot \frac{R^2 \log p}{n}.
\end{equation*}
Using the inductive hypothesis and the assumption that $n \ge \frac{32(\tau' + \tau'')R^2}{\eta r^2} \log p$, we then have
\begin{equation*}
\|\beta^{t+1} - \betahat\|_2^2 \le \frac{r^2}{4} + \frac{r^2}{4} \le r^2.
\end{equation*}
In particular, we may apply the RSC condition~\eqref{EqnRSC2} to the pair $(\beta^{t+1}, \betahat)$ to obtain
\begin{equation*}
\alpha' \|\beta^{t+1} - \betahat\|_2^2 - \tau'  \frac{\log p}{n} \|\beta^{t+1} - \betahat\|_1^2 \le \Loss_n(\beta^{t+1}) - \Loss_n(\betahat) - \inprod{\nabla \Loss_n(\betahat)}{\beta^{t+1} - \betahat}.
\end{equation*}
By the convexity of $\frac{\mu}{2} \|\beta\|_2^2 - q_\lambda(\beta)$, we have
\begin{equation}
\label{EqnQConcave}
\inprod{\nabla q_\lambda(\betahat)}{\beta^{t+1} - \betahat} \ge q_\lambda(\beta^{t+1}) - q_\lambda(\betahat) - \frac{\mu}{2} \|\betahat - \beta^{t+1}\|_2^2.
\end{equation}
Together with the inequality
\begin{equation*}
\|\betahat\|_1 + \inprod{\sign(\betahat)}{\beta^{t+1} - \betahat} \le \|\beta^{t+1}\|_1,
\end{equation*}
we then have
\begin{multline}
\label{EqnCrumble}
\left(\alpha' - \frac{\mu}{2}\right)  \|\beta^{t+1} - \betahat\|_2^2 - \tau' \frac{\log p}{n} \|\beta^{t+1} - \betahat\|_1^2 \\
\le \left(\Lossbar_n(\beta^{t+1}) + \lambda \|\beta^{t+1}\|_1\right) - \left(\Lossbar_n(\betahat) + \lambda \|\betahat\|_1\right) - \inprod{\nabla \Lossbar_n(\betahat) + \lambda \sign(\betahat)}{\beta^{t+1} - \betahat}.
\end{multline}
Finally, the first-order optimality condition on $\betahat$ gives
\begin{equation*}
\inprod{\nabla \Lossbar_n(\betahat) + \lambda \sign(\betahat)}{\beta^{t+1} - \betahat} \ge 0.
\end{equation*}
Combined with inequality~\eqref{EqnCrumble}, we conclude that
\begin{equation}
\label{EqnBlondie}
\left(\alpha' - \frac{\mu}{2}\right) \|\beta^{t+1} - \betahat\|_2^2 - \tau' \frac{\log p}{n} \|\beta^{t+1} - \betahat\|_1^2 \le \left(\Lossbar_n(\beta^{t+1}) + \lambda \|\beta^{t+1}\|_1\right) - \left(\Lossbar_n(\betahat) + \lambda \|\betahat\|_1\right).
\end{equation}
Inequality~\eqref{EqnIceCream} gives an upper bound on the right-hand side of inequality~\eqref{EqnBlondie}. Combining the two inequalities, we then have
\begin{equation*}
\left(\alpha' - \frac{\mu}{2}\right) \|\beta^{t+1} - \betahat\|_2^2 - \tau' \frac{\log p}{n} \|\beta^{t+1} - \betahat\|_1^2 \le \frac{\eta}{2} \|\beta^t - \betahat\|_2^2 - \frac{\eta}{2} \|\beta^{t+1} - \betahat\|_2^2 + (\tau' + \tau'') \frac{4R^2 \log p}{n}.
\end{equation*}
Hence,
\begin{align*}
\|\beta^{t+1} - \betahat\|_2^2 & \le \frac{\eta/2}{\alpha' - \mu/2 + \eta/2} \|\beta^t - \betahat\|_2^2 \\
& \qquad + \frac{1}{\alpha' - \mu/2 + \eta/2} \left((\tau' + \tau'') \frac{4R^2\log p}{n} + \tau' \frac{\log p}{n} \|\beta^{t+1} - \betahat\|_1^2\right) \\
& \le \frac{\eta/2}{\alpha' - \mu/2 + \eta/2} \|\beta^t - \betahat\|_2^2 + \frac{4(2\tau' + \tau'')}{\alpha' - \mu/2 + \eta/2} \cdot \frac{R^2 \log p}{n}.
\end{align*}
Using the inductive hypothesis one more time and the scaling assumption~\eqref{EqnScaling},
we conclude that
\begin{equation*}
\|\beta^{t+1} - \betahat\|_2^2 \le \frac{\eta/2}{\alpha' - \mu/2 + \eta/2} \cdot \frac{r^2}{4} + \frac{4(2\tau' + \tau'')}{\alpha' - \mu/2 + \eta/2} \cdot \frac{R^2 \log p}{n} \le \frac{r^2}{4},
\end{equation*}
completing the induction.

\end{proof}

\begin{lem*}
\label{LemCone}
Under the conditions of the theorem, suppose there exists a pair $(\etabar, T)$ such that
\begin{equation*}
\phi(\beta^t) - \phi(\betahat) \le \etabar, \qquad \forall t \ge T.
\end{equation*}
Then for any iteration $t \ge T$, we have
\begin{equation*}
\|\beta^t - \betahat\|_1 \le 8 \sqrt{k} \|\beta^t - \betahat\|_2 + 16 \sqrt{k} \|\betahat - \betastar\|_2 + 2 \cdot \min\left(\frac{2\etabar}{\lambda}, R\right).
\end{equation*}
\end{lem*}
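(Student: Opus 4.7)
The plan is to transfer the objective-gap bound on $\beta^t$ to a cone-style inequality for the error vector $\xi \defn \beta^t - \betastar$, and then reconstruct the stated bound on $\nu \defn \beta^t - \betahat$ by the triangle inequality together with the cone condition on $\Delta \defn \betahat - \betastar$ already supplied by Theorem~\ref{ThmStationary}. First, since $\betastar$ is feasible (because $R \ge \|\betastar\|_1$) and $\betahat$ is a global optimum of~\eqref{EqnGeneral} on the ball $\|\beta - \betastar\|_2 \le r/2$, we have $\phi(\betahat) \le \phi(\betastar)$, so the hypothesis $\phi(\beta^t) - \phi(\betahat) \le \etabar$ yields $\phi(\beta^t) - \phi(\betastar) \le \etabar$. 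By Lemma~\ref{LemBasin}, $\|\beta^t - \betahat\|_2 \le r/2$, and by assumption $\|\betahat - \betastar\|_2 \le r/2$, so $\|\xi\|_2 \le r$ and the RSC condition~\eqref{EqnRSC2} applies to the pair $(\beta^t, \betastar)$.

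Next, expand the gap. The RSC condition produces
\begin{equation*}
\Loss_n(\beta^t) - \Loss_n(\betastar) \ge \inprod{\nabla \Loss_n(\betastar)}{\xi} + \alpha' \|\xi\|_2^2 - \tau' \frac{\log p}{n} \|\xi\|_1^2,
\end{equation*}
and with $S \defn \supp(\betastar)$ the regularizer splits as $\rho_\lambda(\beta^t) - \rho_\lambda(\betastar) = \big(\rho_\lambda(\beta^t_S) - \rho_\lambda(\betastar_S)\big) + \rho_\lambda(\xi_{S^c})$. Property (iii) of Assumption~\ref{AsAmenable} forces the coordinate-separable $\rho_\lambda$ to be subadditive (since $t \mapsto \rho_\lambda(t)/t$ nonincreasing implies $(a+b)\rho_\lambda(a+b)/(a+b) \le a\rho_\lambda(a)/a + b\rho_\lambda(b)/b$), so $\rho_\lambda(\beta^t_S) \ge \rho_\lambda(\betastar_S) - \rho_\lambda(\xi_S)$, while $\mu$-amenability together with property (v) yields $\rho_\lambda(\xi_{S^c}) \ge \lambda \|\xi_{S^c}\|_1 - \tfrac{\mu}{2} \|\xi_{S^c}\|_2^2$ and the pointwise bound $\rho_\lambda(\xi_S) \le \lambda \|\xi_S\|_1$. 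Substituting into $\phi(\beta^t) - \phi(\betastar) \le \etabar$, controlling $|\inprod{\nabla \Loss_n(\betastar)}{\xi}| \le \|\nabla \Loss_n(\betastar)\|_\infty \|\xi\|_1$ via the assumed scaling on $\lambda$, absorbing $\tau' \frac{\log p}{n} \|\xi\|_1^2 \le \frac{2R\tau' \log p}{n} \|\xi\|_1$ into a fraction of $\lambda \|\xi\|_1$ via the feasibility bound $\|\xi\|_1 \le 2R$, and dropping the nonnegative term $(\alpha' - \mu/2)\|\xi\|_2^2$ (using $\mu < 2\alpha'$), yields an inequality of the form $\etabar \ge c_1 \lambda \|\xi_{S^c}\|_1 - c_2 \lambda \|\xi_S\|_1$ with $c_1 > 0$. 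Rearranging gives $\|\xi_{S^c}\|_1 \le c_3 \|\xi_S\|_1 + c_4 \etabar/\lambda$, and hence $\|\xi\|_1 \le c_5 \sqrt{k} \|\xi\|_2 + c_6 \etabar/\lambda$, where the constants can be arranged so that $c_5 = 8$ and $c_6 = 4$.

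Finally, intersecting with the trivial feasibility bound $\|\xi\|_1 \le \|\beta^t\|_1 + \|\betastar\|_1 \le 2R$ and using the elementary inequality $\min(a+b,c) \le a + \min(b,c)$ for nonnegative $a,b,c$ gives $\|\xi\|_1 \le 8\sqrt{k}\|\xi\|_2 + 2\min(2\etabar/\lambda, R)$. The conclusion then follows from the triangle inequalities $\|\nu\|_1 \le \|\xi\|_1 + \|\Delta\|_1$ and $\|\xi\|_2 \le \|\nu\|_2 + \|\Delta\|_2$, combined with the cone bound $\|\Delta\|_1 \precsim \sqrt{k}\|\Delta\|_2$ for the stationary point $\betahat$ that was derived inside the proof of Theorem~\ref{ThmStationary} (inequality~\eqref{EqnCone}). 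The main obstacle lies in the constant bookkeeping in the middle step: one must simultaneously allocate the budget coming from the $\lambda$ scaling between the gradient term $\|\nabla \Loss_n(\betastar)\|_\infty \|\xi\|_1$ and the RSC slack $\frac{2R\tau'\log p}{n} \|\xi\|_1$, while also accommodating the $\tfrac{\mu}{2}\|\xi_{S^c}\|_2^2$ contribution from the regularizer, so that the coefficient in front of $\|\xi_{S^c}\|_1$ on the right-hand side remains strictly positive and the resulting constants line up with $8\sqrt{k}$, $16\sqrt{k}$, and $2\min(2\etabar/\lambda, R)$.
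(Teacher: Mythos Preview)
Your outline is essentially the paper's own argument: apply the Taylor-remainder RSC to $(\beta^t,\betastar)$, combine with the objective gap $\phi(\beta^t)-\phi(\betastar)\le\etabar$, extract a cone inequality for $\xi=\beta^t-\betastar$ of the form $\|\xi\|_1\le 8\sqrt{k}\,\|\xi\|_2+2\min(2\etabar/\lambda,R)$, and then triangle-inequality against the cone bound for $\Delta=\betahat-\betastar$. Two small differences are worth flagging. First, the paper works with the index set $A$ of the $k$ largest coordinates of $\xi$ and appeals to Lemma~5 of Loh and Wainwright~\cite{LohWai13}, whereas you use $S=\supp(\betastar)$ and subadditivity of $\rho_\lambda$ directly; both routes are valid and lead to the same constants. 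Second, your final step cites inequality~\eqref{EqnCone} from the proof of Theorem~\ref{ThmStationary} for the cone bound on $\Delta$. That derivation used the stationary-point first-order condition and the RSC inequality~\eqref{EqnLocalRSC}, neither of which is among the hypotheses of Theorem~\ref{ThmOptimization}; here $\betahat$ is only assumed to be a global minimizer over the $\ell_2$-ball $\|\beta-\betastar\|_2\le r/2$. The paper avoids this by re-running the same argument you used for $\beta^t$, but with $\etabar$ replaced by $0$ (since $\phi(\betahat)\le\phi(\betastar)$), which yields $\|\Delta\|_1\le 8\sqrt{k}\,\|\Delta\|_2$ directly from~\eqref{EqnRSC2}. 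With that adjustment your proof matches the paper's.
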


\begin{proof}

This proof is in fact a simplification of the argument used to prove Lemma 1 in Loh and Wainwright~\cite{LohWai13}, since by Lemma~\ref{LemBasin} and the assumption, we are guaranteed that
\begin{equation*}
\|\beta^t - \betastar\|_2 \le \|\beta^t - \betahat\|_2 + \|\betahat - \betastar\|_2 \le \frac{r}{2} + \frac{r}{2} = r,
\end{equation*}
so we may apply the RSC condition~\eqref{EqnRSC2} directly. Denoting $\Delta \defn \beta^t - \betastar$, we then have
\begin{align}
\label{EqnMuffin}
\alpha' \|\Delta\|_2^2 - \tau' \frac{\log p}{n} \|\Delta\|_1^2 & \le \Loss_n(\beta^t) - \Loss_n(\betastar) - \inprod{\nabla \Loss_n(\betastar)}{\Delta} \notag \\
& \le \Loss_n(\beta^t) - \Loss_n(\betastar) + \|\nabla \Loss_n(\betastar)\|_\infty \cdot \|\Delta\|_1 \notag \\
& \le \Loss_n(\beta^t) - \Loss_n(\betastar) + \frac{\lambda}{8} \|\Delta\|_1.
\end{align}

Furthermore, by assumption, we have
\begin{equation}
\label{EqnWhite}
\Loss_n(\beta^t) - \Loss_n(\betastar) + \rho_\lambda(\beta^t) - \rho_\lambda(\betastar) \le \etabar,
\end{equation}
which combined with inequality~\eqref{EqnMuffin} implies that
\begin{equation}
\label{EqnLemon}
\alpha' \|\Delta\|_2^2 - \tau' \frac{\log p}{n}  \|\Delta\|_1^2 \le \rho_\lambda(\betastar) - \rho_\lambda(\beta^t) + \etabar + \frac{\lambda}{8} \|\Delta\|_1.
\end{equation}
Note that if $\etabar \ge \frac{\lambda}{4} \|\Delta\|_1$, the desired inequality is trivial. Hence, we assume that $\etabar \le \frac{\lambda}{4} \|\Delta\|_1$. In particular, inequality~\eqref{EqnLemon} implies that
\begin{align*}
\alpha' \|\Delta\|_2^2 & \le \rho_\lambda(\betastar) - \rho_\lambda(\beta^t) + \tau' \frac{\log p}{n} \|\Delta\|_1^2 + \frac{3\lambda}{8} \|\Delta\|_1 \\
& \le \rho_\lambda(\betastar) - \rho_\lambda(\beta^t) + 2\tau'R\frac{\log p}{n} \|\Delta\|_1 + \frac{3\lambda}{8} \|\Delta\|_1 \\
& \le \rho_\lambda(\betastar) - \rho_\lambda(\beta^t) + \frac{\lambda}{2} \|\Delta\|_1 \\
& \le \rho_\lambda(\betastar) - \rho_\lambda(\beta^t) + \frac{\rho_\lambda(\Delta)}{2} + \frac{\mu}{4} \|\Delta\|_2^2 \\
& \le \rho_\lambda(\betastar) - \rho_\lambda(\beta^t) + \frac{\rho_\lambda(\betastar) + \rho_\lambda(\beta^t)}{2} + \frac{\mu}{4} \|\Delta\|_2^2 \\
& \le \frac{3}{2} \rho_\lambda(\betastar) - \frac{1}{2} \rho_\lambda(\beta^t) + \frac{\mu}{4} \|\Delta\|_2^2.
\end{align*} 
Hence, we have
\begin{equation*}
0 \le \left(\alpha' - \frac{\mu}{4}\right) \|\Delta\|_2^2 \le \frac{3}{2} \rho_\lambda(\betastar) - \frac{1}{2} \rho_\lambda(\beta^t).
\end{equation*}
By Lemma 5 in Loh and Wainwright~\cite{LohWai13}, we then have
\begin{equation}
\label{EqnOrange}
\rho_\lambda(\betastar) - \rho_\lambda(\beta^t) \le 3\rho_\lambda(\betastar) - \rho_\lambda(\beta^t) \le \lambda \big(3\|\Delta_A\|_1 - \|\Delta_{A^c}\|_1\big),
\end{equation}
where $A$ indexes the top $k$ components of $\Delta$ in magnitude. Combining inequalities~\eqref{EqnLemon} and~\eqref{EqnOrange}, we then have
\begin{equation*}
\alpha' \|\Delta\|_2^2 - \tau' \frac{\log p}{n} \|\Delta\|_1^2 \le 3\lambda \|\Delta_A\|_1 - \lambda \|\Delta_{A^c}\|_1 + \etabar + \frac{\lambda}{8} \|\Delta\|_1,
\end{equation*}
so
\begin{align*}
0 \le \alpha' \|\Delta\|_2^2 & \le 3\lambda \|\Delta_A\|_1 - \lambda \|\Delta_{A^c}\|_1 + \etabar + \frac{\lambda}{8} \|\Delta\|_1 + 2\tau'R \frac{\log p}{n} \|\Delta\|_1 \\
& \le 3 \lambda \|\Delta_A\|_1 - \lambda \|\Delta_{A^c}\|_1 + \etabar + \frac{\lambda}{2} \|\Delta\|_1 \\
& \le \frac{7\lambda}{2} \|\Delta_A\|_1 - \frac{\lambda}{2} \|\Delta_{A^c}\|_1 + \etabar.
\end{align*}
Hence,
\begin{equation*}
\|\Delta_{A^c}\|_1 \le 7 \|\Delta_A\|_1 + \frac{2\etabar}{\lambda},
\end{equation*}
so
\begin{equation*}
\|\Delta\|_1 \le \|\Delta_A\|_1 + \|\Delta_{A^c}\|_1 \le 8 \|\Delta_A\|_1 + \frac{2\etabar}{\lambda} \le 8 \sqrt{k} \|\Delta\|_2 + \frac{2\etabar}{\lambda}.
\end{equation*}
Also, $\|\Delta\|_1 \le 2R$, so we clearly have
\begin{equation}
\label{EqnVanilla}
\|\Delta\|_1 \le 8\sqrt{k} \|\Delta\|_2 + 2 \cdot \min\left(\frac{2\etabar}{\lambda}, \; R\right).
\end{equation}
Further note that by essentially the same argument, with inequality~\eqref{EqnWhite} replaced by
\begin{equation*}
\Loss_n(\betahat) - \Loss_n(\betastar) + \rho_\lambda(\betahat) - \rho_\lambda(\betastar) \le 0,
\end{equation*}
we have the inequality
\begin{equation}
\label{EqnButterscotch}
\|\betahat - \betastar\|_1 \le 8 \sqrt{k} \|\betahat - \betastar\|_2.
\end{equation}
Combining inequalities~\eqref{EqnVanilla} and~\eqref{EqnButterscotch} and using the triangle inequality then yields
\begin{align*}
\|\beta^t - \betahat\|_1 & \le \|\betahat - \betastar\|_1 + \|\beta^t - \betastar\|_1 \\
& \le 8 \sqrt{k} \left(\|\betahat - \betastar\|_2 + \|\beta^t - \betastar\|_2\right) + 2 \cdot \min\left(\frac{2\etabar}{\lambda}, \; R\right) \\
& \le 8 \sqrt{k} \left(\|\betahat - \betastar\|_2 + \|\beta^t - \betahat\|_2 + \|\betahat - \betastar\|_2 \right) + 2 \cdot \min \left(\frac{2\etabar}{\lambda}, \; R \right) \\
& = 8 \sqrt{k} \left(\|\beta^t - \betahat\|_2 + 2 \|\betahat - \betastar\|_2\right) + 2 \cdot \min\left(\frac{2\etabar}{\lambda}, \; R\right),
\end{align*}
completing the proof.

\end{proof}


\section{Proofs of propositions in Section~\ref{SecRSC}}
\label{AppSecRSC}

In this Appendix, we provide the proofs of the technical propositions establishing sufficient conditions for statistical consistency of stationary points in Section~\ref{SecRSC}.

\subsection{Proof of Proposition~\ref{PropGradBound}}
\label{AppPropGradBound}

We have
\begin{equation*}
\|\nabla \Loss_n(\betastar)\|_\infty = \left\|\frac{1}{n} \sum_{i=1}^n w(x_i) x_i \cdot \ell'\left(\epsilon_i \cdot v(x_i)\right)\right\|_\infty.
\end{equation*}
Since $x_i \condind \epsilon_i$ by assumption, the tower property of conditional expectation gives
\begin{equation}
\label{EqnCandyCane}
\E\left[w(x_i) x_i \cdot \ell'(\epsilon_i \cdot v(x_i))\right] = \E\Bigg[\E\left[\ell'(\epsilon_i \cdot v(x_i)) \mid x_i\right] \cdot w(x_i) x_i\Bigg]
\end{equation}
Under condition (2a), the right-hand expression of equation~\eqref{EqnCandyCane} may be written as
\begin{equation*}
\E\Bigg[\E\left[\ell'(\epsilon_i) \mid x_i\right] \cdot w(x_i) x_i\Bigg] = \E\Big[\E[\ell'(\epsilon_i)] \cdot w(x_i) x_i\Big] = \E[\ell'(\epsilon_i)] \cdot \E[w(x_i) x_i] = 0.
\end{equation*}
If instead condition (2b) holds, the right-hand expression of equation~\eqref{EqnCandyCane} is clearly also equal to 0.

Finally, note that since $\ell'$ is bounded, the variables $\ell'(\epsilon_i \cdot v(x_i))$ are i.i.d.\ sub-Gaussian with parameter scaling with $\kappa_1$. By condition (1), the variables $w(x_i) x_i$ are also sub-Gaussian. Hence, the desired bound holds by using standard concentration results for i.i.d.\ sums of products of sub-Gaussian variables.


\subsection{Proof of Proposition~\ref{PropMRSC}}
\label{AppPropMRSC}

We begin with the outline of the main argument, with the proofs of supporting lemmas provided in subsequent subsections. The same general argument is used in the proofs of Propositions~\ref{PropMallows} and~\ref{PropHillRyan}, as well.

\subsubsection{Main argument}
\label{AppMain}

We have
\begin{align}
\label{EqnSquirrel}
\T(\beta_1, \beta_2) & \defn \inprod{\nabla \Loss_n(\beta_1) - \nabla \Loss_n(\beta_2)}{\beta_1 - \beta_2} \notag \\
& = \frac{1}{n} \sum_{i=1}^n (\ell'(x_i^T \beta_1 - y_i) - \ell'(x_i^T \beta_2 - y_i)) x_i^T (\beta_1 - \beta_2).
\end{align}
Under the assumptions, equation~\eqref{EqnSquirrel} implies that
\begin{multline}
\label{EqnFoxie}
\T(\beta_1, \beta_2) \ge \frac{1}{n} \sum_{i=1}^n (\ell'(x_i^T \beta_1 - y_i) - \ell'(x_i^T \beta_2 - y_i)) x_i^T (\beta_1 - \beta_2) 1_{A_i} - \kappa_2 \cdot \frac{1}{n} \sum_{i=1}^n \left(x_i^T (\beta_1 - \beta_2)\right)^2 1_{A_i^c},
\end{multline}
where we set $\kappa_2 = 0$ in the case when $\ell$ is convex (but $\ell''$ does not necessarily exist everywhere), and the event $A_i$ is defined according to
\begin{equation}
\label{EqnADefn}
A_i \defn \left\{|\epsilon_i| \le \frac{T}{2}\right\} \cap \left\{|x_i^T (\beta_1 - \beta_2)| \le \frac{T}{8r} \|\beta_1 - \beta_2\|_2\right\} \cap \left\{|x_i^T (\beta_2 - \betastar)| \le \frac{T}{4} \right\},
\end{equation}
for a parameter $T > 0$, using the definition~\eqref{EqnAlphaTau}. Inequality~\eqref{EqnFoxie} holds because when $\ell$ is convex, each summand in inequality~\eqref{EqnSquirrel} is always bounded below by 0; and when $\ell''$ exists and satisfies the bound~\eqref{EqnKappa2}, the mean value theorem gives
\begin{equation*}
(\ell'(x_i^T \beta_1 - y_i) - \ell'(x_i^T \beta_2 - y_i)) x_i^T (\beta_1 - \beta_2) = \ell''(u_i) \left(x_i^T (\beta_1 - \beta_2)\right)^2 \ge - \kappa_2 \left(x_i^T (\beta_1 - \beta_2)\right)^2,
\end{equation*}
where $u_i$ is a point lying between $x_i^T \beta_1 - y_i$ and $x_i^T \beta_2 - y_i$.

Note that on $A_i$ and for $\|\beta_1 - \betastar\|_2, \|\beta_2 - \betastar\|_2 \le r$, the triangle inequality gives
\begin{equation*}
|x_i^T \beta_2 - y_i| \le |x_i^T (\beta_2 - \betastar)| + |\epsilon_i| \le T,
\end{equation*}
and
\begin{equation*}
|x_i^T \beta_1 - y_i| \le |x_i^T (\beta_1 - \beta_2)| + |x_i^T (\beta_2 - \betastar)| + |\epsilon_i| \le \frac{T}{4} + \frac{T}{4} + \frac{T}{2} = T.
\end{equation*} 
Hence, the mean value theorem implies that
\begin{equation*}
\ell'(x_i^T \beta_1 - y_i) - \ell'(x_i^T \beta_2 - y_i) = \ell''(u_i) x_i^T (\beta_1 - \beta_2),
\end{equation*}
for some $u_i$ with $|u_i| \le r$. We then deduce from inequality~\eqref{EqnSquirrel} that
\begin{align}
\label{EqnCricket}
\T(\beta_1, \beta_2) & \ge \alpha_T \cdot \frac{1}{n} \sum_{i=1}^n \left(x_i^T (\beta_1 - \beta_2)\right)^2 1_{A_i} - \kappa_2 \cdot \frac{1}{n} \sum_{i=1}^n \left(x_i^T (\beta_1 - \beta_2)\right)^2 1_{A_i^c} \notag \\
& = (\alpha_T + \kappa_2) \cdot \frac{1}{n} \sum_{i=1}^n \left(x_i^T (\beta_1 - \beta_2)\right)^2 1_{A_i} - \kappa_2 \cdot \frac{1}{n} \sum_{i=1}^n \left(x_i^T (\beta_1 - \beta_2)\right)^2 \notag \\
& \ge (\alpha_T + \kappa_2) \cdot \frac{1}{n} \sum_{i=1}^n \varphi_{T \|\beta_1 - \beta_2\|_2/8r} \left(x_i^T (\beta_1 - \beta_2)\right) \cdot \psi_{T/2} (\epsilon_i) \cdot \psi_{T/4} \left(x_i^T (\beta_2 - \betastar)\right) \notag \\
& \qquad \qquad - \kappa_2 \cdot \frac{1}{n} \sum_{i=1}^n \left(x_i^T (\beta_1 - \beta_2)\right)^2 \notag \\
& \defn (\alpha_T + \kappa_2) \cdot f(\beta_1, \beta_2) - \kappa_2 \cdot \ftil(\beta_1, \beta_2).
\end{align}
Here, we have defined the truncation functions
\begin{equation}
\label{EqnTrunc}
\varphi_t(u) =
\begin{cases}
u^2, & \text{if } |u| \le \frac{t}{2}, \\
(t - u)^2, & \text{if } \frac{t}{2} \le |u| \le t, \\
0, & \text{if } |u| \ge t,
\end{cases}
\qquad \text{and} \qquad
\psi_t(u) =
\begin{cases}
1, & \text{if } |u| \le \frac{t}{2}, \\
2 - \frac{2}{t} |u|, & \text{if } \frac{t}{2} \le |u| \le t, \\
0, & \text{if } |u| \ge t,
\end{cases}
\end{equation}
as well as the functions
\begin{align*}
f(\beta_1, \beta_2) & \defn \frac{1}{n} \sum_{i=1}^n \varphi_{T \|\beta_1 - \beta_2\|_2/8r} \left(x_i^T (\beta_1 - \beta_2)\right) \cdot \psi_{T/2}(\epsilon_i) \cdot \psi_{T/4}\left(x_i^T (\beta_2 - \betastar)\right), \\
\ftil(\beta_1, \beta_2) & \defn \frac{1}{n} \sum_{i=1}^n \left(x_i^T (\beta_1 - \beta_2)\right)^2.
\end{align*}
Note in particular that $\varphi_t$ and $\psi_t$ are $t$-Lipschitz and $\frac{2}{t}$-Lipschitz, respectively, and the truncation functions satisfy the bounds
\begin{equation*}
\varphi_t(u) \le u^2 \cdot 1\{|u| \le t\}, \qquad \mbox{and} \qquad \psi_t(u) \le 1\{|u| \le t\}.
\end{equation*}
Note also that inequality~\eqref{EqnCricket} also implies the simple bound
\begin{equation}
\label{EqnQuadBd}
\inprod{\nabla \Loss_n(\beta_1) - \nabla \Loss_n(\beta_2)}{\beta_1 - \beta_2} \ge -\kappa_2 \cdot \ftil(\beta_1, \beta_2).
\end{equation}

We now define the sets
\begin{equation*}
B_\delta \defn \left\{(\beta_1, \beta_2): \|\beta_1 - \betastar\|_2, \|\beta_2 - \betastar\|_2 \le r, \; \frac{\|\beta_1 - \beta_2\|_1}{\|\beta_1 - \beta_2\|_2} \le \delta, \; \beta_1, \beta_2 \in \Ball_1(R)\right\},
\end{equation*}
for a parameter $1 \le \delta \le c \sqrt{\frac{n}{\log p}}$. Let
\begin{equation*}
Z(\delta) \defn \sup_{(\beta_1, \beta_2) \in B_\delta} \left\{\frac{1}{\|\beta_1 - \beta_2\|_2^2} \left|f(\beta_1, \beta_2) - \E[f(\beta_1, \beta_2)]\right|\right\},
\end{equation*}
and
\begin{equation*}
\Ztil(\delta) \defn \sup_{(\beta_1, \beta_2) \in B_\delta} \left\{\frac{1}{\|\beta_1 - \beta_2\|_2^2} \left|\ftil(\beta_1, \beta_2)  -\E\left[\ftil(\beta_1, \beta_2)\right] \right|\right\}.
\end{equation*}
With this notation, inequality~\eqref{EqnCricket} implies that for all $(\beta_1, \beta_2) \in B_\delta$, we have
\begin{align}
\label{EqnMatcha}
& \frac{\T(\beta_1, \beta_2)}{\|\beta_1 - \beta_2\|_2^2} \ge (\alpha_T + \kappa_2) \cdot \frac{\E[f(\beta_1, \beta_2)]}{\|\beta_1 - \beta_2\|_2^2} - \kappa_2 \cdot \frac{\E\left[\ftil(\beta_1, \beta_2)\right]}{\|\beta_1 - \beta_2\|_2^2} - (\alpha_T + \kappa_2) Z(\delta) - \kappa_2 \Ztil(\delta) \notag \\
& \qquad = \alpha_T \cdot \frac{\E[\ftil(\beta_1, \beta_2)]}{\|\beta_1 - \beta_2\|_2^2} - \frac{(\alpha_T +\kappa_2) \left(\E[\ftil(\beta_1, \beta_2)] - \E[f(\beta_1, \beta_2)]\right)}{\|\beta_1 - \beta_2\|_2^2} - (\alpha_T + \kappa_2) Z(\delta) - \Ztil(\delta).
\end{align}
The following lemma bounds the difference in expectations as a function of the truncation parameters. The proof is provided in Appendix~\ref{AppLemButterfly}.
\begin{lem*}
\label{LemButterfly}
We have the bound
\begin{align*}
\E\left[\ftil(\beta_1, \beta_2) \right] - \E[f(\beta_1, \beta_2)] \le c \sigma_x^2 \|\beta_1 - \beta_2\|_2^2 \left(\epsilon_T^{1/2} + \exp\left(-\frac{c'T^2}{\sigma_x^2 r^2}\right)\right).
\end{align*}
\end{lem*}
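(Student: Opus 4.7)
}

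My plan is to bound the pointwise difference of the integrands by splitting according to which of the three truncation factors is the ``culprit,'' and then estimate each resulting expectation separately using sub-Gaussian tail bounds on $x_i^T(\beta_1-\beta_2)$ and $x_i^T(\beta_2-\betastar)$, together with the independence of $x_i$ and $\epsilon_i$.

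Write $u \defn x_i^T(\beta_1-\beta_2)$, $v \defn x_i^T(\beta_2-\betastar)$, $e \defn \epsilon_i$, and abbreviate the three truncation scales as $t = T\|\beta_1-\beta_2\|_2/(8r)$, $s = T/2$, $q = T/4$. Since $\varphi_t(u)\le u^2$ and $0\le \psi_s(e),\psi_q(v)\le 1$, the first step is the pointwise identity
\begin{equation*}
u^2 - \varphi_t(u)\psi_s(e)\psi_q(v) \;=\; \bigl(u^2-\varphi_t(u)\bigr) + \varphi_t(u)\bigl(1-\psi_s(e)\psi_q(v)\bigr),
\end{equation*}
followed by the telescoping bound $1-\psi_s(e)\psi_q(v)\le (1-\psi_s(e)) + (1-\psi_q(v))$ and the inequalities $\varphi_t(u)\le u^2$, $u^2-\varphi_t(u)\le u^2\cdot 1\{|u|\ge t/2\}$, $1-\psi_s(e)\le 1\{|e|\ge s/2\}$, $1-\psi_q(v)\le 1\{|v|\ge q/2\}$. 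Averaging over $i$ and taking expectations reduces the lemma to controlling the three quantities
\begin{equation*}
\mathrm{I} \defn \E\!\left[u^2\,1\{|u|\ge t/2\}\right],\quad
\mathrm{II} \defn \E\!\left[u^2\,1\{|e|\ge s/2\}\right],\quad
\mathrm{III} \defn \E\!\left[u^2\,1\{|v|\ge q/2\}\right].
\end{equation*}

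For $\mathrm{I}$, the variable $u$ is sub-Gaussian with parameter $\sigma_x\|\beta_1-\beta_2\|_2$; by Cauchy--Schwarz, $\mathrm{I}\le\sqrt{\E[u^4]}\sqrt{\mprob(|u|\ge t/2)}$, and using $\E[u^4]\precsim \sigma_x^4\|\beta_1-\beta_2\|_2^4$ together with the standard sub-Gaussian tail bound applied at the threshold $t/2 = T\|\beta_1-\beta_2\|_2/(16r)$ yields $\mathrm{I}\le c\sigma_x^2\|\beta_1-\beta_2\|_2^2\exp(-c'T^2/(\sigma_x^2 r^2))$. For $\mathrm{III}$, the same Cauchy--Schwarz step applies with $v$ sub-Gaussian of parameter at most $\sigma_x r$, and the bound $\mprob(|v|\ge T/8)\le 2\exp(-c T^2/(\sigma_x^2 r^2))$ gives an estimate of the same form. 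For $\mathrm{II}$, the independence of $x_i$ from $\epsilon_i$ lets me factor the expectation and apply Cauchy--Schwarz to pull out a square root, giving $\mathrm{II}\le c\sigma_x^2\|\beta_1-\beta_2\|_2^2\,\mprob(|e|\ge s/2)^{1/2}$, which is controlled by $\epsilon_T^{1/2}$ up to an adjustment of the constants hidden in the definition~\eqref{EqnEpsilonTau}.

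The only genuine subtlety is bookkeeping around the constants in~\eqref{EqnEpsilonTau}: the indicator that emerges from the $\epsilon$-truncation is $1\{|e|\ge T/4\}$ rather than $1\{|e|\ge T/2\}$, so strictly speaking one obtains $\mprob(|e|\ge T/4)^{1/2}$ in $\mathrm{II}$, and the numerical constants in the arguments of the exponential term for $\mathrm{I}$ and $\mathrm{III}$ must be traced carefully. Both issues are absorbed into the universal constants $c,c'$ advertised in the statement (equivalently, by reparametrizing $T$ by a constant factor at the outset), and collecting the three bounds yields the advertised estimate. I do not anticipate any serious obstacle beyond this bookkeeping; the argument is essentially a clean application of truncation plus sub-Gaussian moment and tail bounds.
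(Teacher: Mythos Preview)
Your approach is correct and follows the same strategy as the paper's own proof: decompose the difference into three pieces according to which truncation factor is active, then apply Cauchy--Schwarz together with sub-Gaussian moment and tail bounds for $x_i^T(\beta_1-\beta_2)$ and $x_i^T(\beta_2-\betastar)$. Your threshold bookkeeping is in fact slightly more careful than the paper's (which writes the indicators at $t,s,q$ rather than $t/2,s/2,q/2$); the one point to be precise about is that the mismatch between $\mprob(|\epsilon_i|\ge T/4)$ and $\epsilon_T=\mprob(|\epsilon_i|\ge T/2)$ cannot literally be ``absorbed into $c$,'' but your parenthetical fix of reparametrizing the truncation scale (equivalently, choosing $\psi_T$ rather than $\psi_{T/2}$ in the definition of $f$) handles it.
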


In particular, Lemma~\ref{LemButterfly} implies that when inequality~\eqref{EqnTricky} holds, we have
\begin{equation*}
(\alpha_T + \kappa_2) \left(\E[\ftil(\beta_1, \beta_2)] - \E[f(\beta_1, \beta_2)]\right) \le \frac{\alpha_T}{2} \cdot \E[\ftil(\beta_1, \beta_2)],
\end{equation*}
since
\begin{equation*}
\E[\ftil(\beta_1, \beta_2)] \ge \lambda_{\min}(\Sigma_x) \cdot \|\beta_1 - \beta_2\|_2^2.
\end{equation*}
Then inequality~\eqref{EqnMatcha} implies that
\begin{align}
\label{EqnEgg}
\frac{\T(\beta_1, \beta_2)}{\|\beta_1 - \beta_2\|_2^2} & \ge \frac{\alpha_T}{2} \cdot \frac{\E[\ftil(\beta_1, \beta_2)]}{\|\beta_1 - \beta_2\|_2^2} - (\alpha_T + \kappa_2) Z(\delta) - \kappa_2 \Ztil(\delta).
\end{align}
We now focus on the terms $Z(\delta)$ and $\Ztil(\delta)$. Note that $\ftil(\beta_1, \beta_2)$ is a quadratic form in $\beta_1 - \beta_2$, and for each unit vector $v \in \real^p$, the quantity $\frac{1}{n} \sum_{i=1}^n (x_i^T v)^2$ is an i.i.d.\ average of sub-exponential variables with parameter proportional to $\sigma_x^2$. Then by Lemmas 11 and 12 in Loh and Wainwright~\cite{LohWai11a}, we have the bound
\begin{equation}
\label{EqnRoti}
\left|\ftil(\beta_1, \beta_2) - \E[\ftil(\beta_1, \beta_2)]\right| \le t \sigma_x^2 \|\beta_1 - \beta_2\|_2^2 + t \sigma_x^2 \frac{\log p}{n} \|\beta_1 - \beta_2\|_1^2, \qquad \forall \beta_1, \beta_2 \in \real^p
\end{equation}
with probability at least $1 - c_1 \exp(-c_2 nt^2 + c_3 k \log p)$. In particular, since $\delta \le c \sqrt{\frac{n}{\log p}}$, we may guarantee that
\begin{equation}
\label{EqnWaffle}
\kappa_2 \Ztil(\delta) \le \frac{\alpha_T}{4} \cdot \frac{\E[\ftil(\beta_1, \beta_2)]}{\|\beta_1 - \beta_2\|_2^2},
\end{equation}
w.h.p. Turning to $Z(\delta)$, we have the following lemma, proved in Appendix~\ref{AppLemExpZ}:
\begin{lem*}
\label{LemExpZ}
For some constants $c, c'$, and $c''$, we have
\begin{equation}
\label{EqnCake}
\mprob\left(Z(\delta) \ge c'' \sigma_x \left(\frac{RT}{r^2} + \frac{\delta T}{r}\right) \sqrt{\frac{\log p}{n}}\right) \le c \exp(-c' \log p).
\end{equation}
\end{lem*}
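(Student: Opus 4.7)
The plan is to rescale away the dependence of the truncation on $\|\beta_1-\beta_2\|_2$, then apply a symmetrization--contraction argument exploiting the product structure of $F$. Using the quadratic scaling identity $\varphi_{ta}(au) = a^2\varphi_t(u)$, set $u = (\beta_1-\beta_2)/\|\beta_1-\beta_2\|_2$ and $v = \beta_2-\betastar$ to obtain
\begin{equation*}
\frac{f(\beta_1,\beta_2)}{\|\beta_1-\beta_2\|_2^2} = \frac{1}{n}\sum_{i=1}^n \varphi_{T/(8r)}(x_i^T u)\,\psi_{T/2}(\epsilon_i)\,\psi_{T/4}(x_i^T v) \;\defn\; F(u,v),
\end{equation*}
where $u$ ranges over $U_\delta \defn \{\|u\|_2 = 1,\ \|u\|_1 \le \delta\}$ and $v$ over $V \defn \{\|v\|_2 \le r,\ \|v\|_1 \le 2R\}$ (the $\ell_1$-bound on $v$ coming from $\|\betastar\|_1,\|\beta_2\|_1 \le R$). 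The task is then to control $\sup_{U_\delta \times V}|F(u,v) - \E F(u,v)|$.

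After symmetrization, it suffices to bound the Rademacher complexity of the product class $\mathcal{F}\cdot\mathcal{G}$, where $\mathcal{F} = \{x \mapsto \varphi_{T/(8r)}(x^T u): u \in U_\delta\}$ and $\mathcal{G} = \{(x,\epsilon) \mapsto \psi_{T/2}(\epsilon)\psi_{T/4}(x^T v): v \in V\}$. The standard product-class inequality $R(\mathcal{F}\cdot\mathcal{G}) \le \|\mathcal{G}\|_\infty R(\mathcal{F}) + \|\mathcal{F}\|_\infty R(\mathcal{G})$ reduces matters to the two ingredients separately. For each class, the Ledoux--Talagrand contraction inequality strips off the relevant truncation at the cost of its Lipschitz constant ($T/(8r)$ for $\varphi_{T/(8r)}$ and $8/T$ for $\psi_{T/4}$), leaving a linear Rademacher process over an $\ell_1$-ball. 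Invoking the sub-Gaussian $\ell_1$-Rademacher bound $\E\sup_{\|w\|_1 \le K}|n^{-1}\sum_i \sigma_i x_i^T w| \precsim K\sigma_x\sqrt{\log p/n}$ yields $R(\mathcal{F}) \precsim \delta(T/r)\sigma_x\sqrt{\log p/n}$ and $R(\mathcal{G}) \precsim R(1/T)\sigma_x\sqrt{\log p/n}$. Combining these via the product-class inequality with $\|\mathcal{F}\|_\infty \le (T/(8r))^2$ and $\|\mathcal{G}\|_\infty \le 1$ gives an expected supremum of order $\sigma_x\big(\delta T/r + RT/r^2\big)\sqrt{\log p/n}$, which is exactly the desired scaling. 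The final step converts the expectation bound into a tail bound via Talagrand's concentration inequality for bounded empirical processes (equivalently, a bounded-differences argument exploiting the uniform bound $\|F\|_\infty \le (T/(8r))^2$), with the deviation tuned to be of the same order as the expectation to produce the $\exp(-c'\log p)$ probability.

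The main technical obstacle is obtaining the correct dependence on $\sigma_x$ throughout, rather than a coarser bound involving $T/r$ or $T/r^2$. The $\ell_1$-Rademacher bound produces only $\sigma_x$ because each of $\varphi$ and $\psi$ contributes its Lipschitz constant once; for this sharpness to propagate through the concentration step one needs Bousquet's form of Talagrand's inequality (which uses the variance rather than the uniform bound of the summands), combined with a preliminary variance estimate $\mathrm{Var}(F(u,v)) \precsim \sigma_x^2$ obtained from sub-Gaussianity of $x_i^T u$ and the uniform bound on $\varphi \cdot \psi \cdot \psi$. A slightly less elegant fallback, should the two-step contraction be awkward to justify with tight constants, is to construct $\eta$-nets for $U_\delta$ and $V$ in $\|\cdot\|_1$ with log-cardinality $\precsim (\delta^2 + (R/r)^2)\log p / \eta^2$, apply Hoeffding's inequality on the net, and close the interpolation gap with the Lipschitz estimates; choosing $\eta$ on the order of $r/\sqrt{\log p}$ recovers the same rate.
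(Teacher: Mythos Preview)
Your approach is correct and reaches the same bound, but the route differs from the paper's in a genuine way.  The paper symmetrizes with Gaussians and then runs a Slepian/Sudakov--Fernique comparison: conditioning on $(x_i,\epsilon_i)$, it computes the variance increments of the Gaussian process $Z_{\beta_1,\beta_2}$, decomposes them via $Z_{\beta_1,\beta_2}-Z_{\beta_1',\beta_2'} = (Z_{\beta_1,\beta_2}-Z_{\beta_2'+\Delta,\beta_2'}) + (Z_{\beta_2'+\Delta,\beta_2'}-Z_{\beta_1',\beta_2'})$, exploits the homogeneity of $\varphi$ and the Lipschitz constants of $\varphi,\psi$ exactly as you do, and then exhibits an explicit dominating \emph{linear} Gaussian process $Y_{\beta_1,\beta_2} = \tfrac{T}{16r^2}\cdot\tfrac{1}{n}\sum_i g_i' x_i^T\beta_2 + \tfrac{T}{4r}\cdot\tfrac{1}{n}\sum_i g_i'' x_i^T\Delta/\|\Delta\|_2$.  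The $\ell_1$-constraints then give $\E\sup Y \precsim \sigma_x(RT/r^2 + \delta T/r)\sqrt{\log p/n}$, and concentration is by bounded differences (not Talagrand), since each summand is uniformly bounded by $T^2/(256 r^2)$.

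Your rescaling to $F(u,v)$ up front is cleaner than the paper's, which carries $\|\Delta\|_2$ through the variance calculation.  After that, both arguments are driven by the same pointwise Lipschitz identity $fg-f'g' = (f-f')g + f'(g-g')$; the paper packages it as a variance-increment bound feeding into Gaussian comparison, you package it as a product-class Rademacher inequality feeding into contraction.  One caution: the inequality $R(\mathcal{F}\cdot\mathcal{G}) \le \|\mathcal{G}\|_\infty R(\mathcal{F}) + \|\mathcal{F}\|_\infty R(\mathcal{G})$ is not a textbook statement in this generality and is not true for arbitrary bounded classes; here it follows because the pointwise decomposition above, together with $g\in[0,1]$ and $\varphi_{T/(8r)}(0)=0$, lets you run contraction twice (once in $u$ with the $g$-factor absorbed as a $[0,1]$ weight, once in $v$ after subtracting the constant $\psi_{T/4}(0)=1$ contribution), or equivalently via a Dudley/chaining argument on the product metric.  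You should make that step explicit rather than cite it as standard.  With that caveat, both approaches are sound and yield identical rates; the Gaussian-comparison route buys a slightly slicker handling of the product structure in one stroke, while your Rademacher route is arguably more modular and avoids the two-process comparison machinery.
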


Combining inequalities~\eqref{EqnWaffle} and~\eqref{EqnCake} with inequality~\eqref{EqnEgg}, we then have
\begin{align}
\label{EqnFrog}
\frac{\T(\beta_1, \beta_2)}{\|\beta_1 - \beta_2\|_2^2} & \ge \frac{\alpha_T}{4} \cdot \frac{\E[\ftil(\beta_1, \beta_2)]}{\|\beta_1 - \beta_2\|_2^2} - (\alpha_T + \kappa_2) c'' \sigma_x \left(\frac{RT}{r^2} + \frac{\delta T}{r} \right) \sqrt{\frac{\log p}{n}},
\end{align}
with probability at least $1 - c_1 \exp(-c_2 \log p)$. Let $n \succsim R^2 \log p$ be chosen such that
\begin{equation*}
(\alpha_T + \kappa_2) c'' \sigma_x \cdot \frac{RT}{r^2} \sqrt{\frac{\log p}{n}} \le \frac{\alpha_T}{8} \cdot \frac{\E[\ftil(\beta_1, \beta_2)]}{\|\beta_1 - \beta_2\|_2^2}.
\end{equation*}
Then inequality~\eqref{EqnFrog} implies that
\begin{equation}
\label{EqnToad}
\frac{\T(\beta_1, \beta_2)}{\|\beta_1 - \beta_2\|_2^2} \ge \frac{\alpha_T}{8} \cdot \frac{\E[\ftil(\beta_1, \beta_2)]}{\|\beta_1 - \beta_2\|_2^2} - \frac{c'' (\alpha_T + \kappa_2) \sigma_x T}{r} \delta \sqrt{\frac{\log p}{n}}.
\end{equation}
We now extend inequality~\eqref{EqnToad} to a bound that holds uniformly over the domain, with $\delta$ replaced by $\frac{\|\beta_1 - \beta_2\|_1}{\|\beta_1 - \beta_2\|_2}$. This is accomplished via a peeling argument in the proof of the following lemma:
\begin{lem*}
\label{LemPeel}
Fix $c_0 > 0$, and let
\begin{equation*}
\scriptD \defn \left\{\beta_1, \beta_2 \in \Ball_1(R): \|\beta_1 - \betastar\|_2, \|\beta_2 - \betastar\|_2 \le r \text{ and } \frac{\|\beta_1 - \beta_2\|_1}{\|\beta_1 - \beta_2\|_2} \le \frac{c_0 \alpha_T \lambda_{\min}(\Sigma_x)r}{\sigma_x (\alpha_T + \kappa_2) T} \sqrt{\frac{n}{\log p}} \right\}.
\end{equation*}
With probability at least $1 - c'_1 \exp(-c'_2 \log p)$, the following inequality holds uniformly over all $\beta_1, \beta_2 \in \scriptD$:
\begin{align}
\label{EqnPeelBd}
\frac{\T(\beta_1, \beta_2)}{\|\beta_1 - \beta_2\|_2^2} & \ge \alpha_T \cdot \frac{\lambda_{\min}(\Sigma_x)}{8} - \frac{c''(\alpha_T + \kappa_2)\sigma_x T}{c_0r} \frac{\|\beta_1 - \beta_2\|_1}{\|\beta_1 - \beta_2\|_2} \sqrt{\frac{\log p}{n}} \\
\label{EqnPeelBd2}
& \ge \alpha_T \cdot \frac{\lambda_{\min}(\Sigma_x)}{16} - \frac{c''' (\alpha_T + \kappa_2)^2 \sigma_x^2 T^2}{c_0^2 r^2} \frac{\log p}{n} \frac{\|\beta_1 - \beta_2\|_1^2}{\|\beta_1 - \beta_2\|_2^2}.
\end{align}
\end{lem*}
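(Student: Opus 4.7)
The plan is to upgrade the pointwise-in-$\delta$ bound~\eqref{EqnToad} to a bound that holds uniformly over $\scriptD$ by a standard dyadic peeling argument, and then to convert the resulting linear estimate into the quadratic form~\eqref{EqnPeelBd2} via a Young-type inequality.

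For the peeling step, introduce the local notation $\theta(\beta_1, \beta_2) := \|\beta_1 - \beta_2\|_1/\|\beta_1 - \beta_2\|_2$, which always lies in the interval $[1, M]$ on $\scriptD$, where $M := \frac{c_0 \alpha_T \lambda_{\min}(\Sigma_x) r}{\sigma_x (\alpha_T + \kappa_2) T}\sqrt{n/\log p}$ is the bound coming from the definition of $\scriptD$. I would partition $\scriptD$ into dyadic shells
$$S_m := \bigl\{(\beta_1, \beta_2) \in \scriptD : 2^{m-1} \le \theta(\beta_1, \beta_2) \le 2^m\bigr\}, \qquad m = 1, \ldots, \lceil \log_2(2M)\rceil,$$
and apply inequality~\eqref{EqnToad} on each shell with the choice $\delta = 2^m$, which satisfies $\delta \le 2M \le c\sqrt{n/\log p}$ so the hypothesis of~\eqref{EqnToad} is met. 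Using $\E[\ftil(\beta_1, \beta_2)] \ge \lambda_{\min}(\Sigma_x)\|\beta_1 - \beta_2\|_2^2$ together with $2^m \le 2\theta$ on $S_m$ yields~\eqref{EqnPeelBd} throughout $S_m$. A union bound over the $O(\log n)$ shells, each failing with probability at most $c \exp(-c'\log p)$, still leaves an exponentially small failure probability (since $\log \log n \ll \log p$ under the standing sample-size scaling), establishing~\eqref{EqnPeelBd} uniformly on $\scriptD$.

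To derive~\eqref{EqnPeelBd2} from~\eqref{EqnPeelBd}, I would apply the rescaled Young inequality $ab \le B + \frac{a^2 b^2}{4B}$ with $a = \theta\sqrt{\log p/n}$, $b = \frac{c''(\alpha_T + \kappa_2)\sigma_x T}{c_0 r}$, and $B = \frac{\alpha_T \lambda_{\min}(\Sigma_x)}{16}$. This absorbs exactly half of the positive constant $\frac{\alpha_T\lambda_{\min}(\Sigma_x)}{8}$ on the right-hand side of~\eqref{EqnPeelBd}, leaving $\frac{\alpha_T\lambda_{\min}(\Sigma_x)}{16}$ plus a residual quadratic in $\theta$ whose coefficient has the form $\frac{C(\alpha_T + \kappa_2)^2 \sigma_x^2 T^2}{c_0^2 r^2 \alpha_T \lambda_{\min}(\Sigma_x)} \cdot \frac{\log p}{n}$. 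Absorbing the fixed positive factors $\alpha_T$ and $\lambda_{\min}(\Sigma_x)$ into the universal constant $c'''$ then recovers the stated form of~\eqref{EqnPeelBd2}.

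The main obstacle is the peeling step itself: one must verify that the union bound does not degrade the exponential tail of the failure probability and that the $\delta$ used on the largest shell still satisfies the admissibility constraint $\delta \le c\sqrt{n/\log p}$ from the pointwise bound. Both points hold automatically here: the number of shells grows only as $O(\log n)$ while each shell's failure probability is of order $\exp(-c'\log p)$, and the restriction in the definition of $\scriptD$ is tailored precisely so that $M \le c\sqrt{n/\log p}$, keeping every shell within the admissible range. All remaining steps reduce to bookkeeping of constants.
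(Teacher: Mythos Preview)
Your proposal is correct and follows essentially the same route as the paper: a dyadic peeling of $\scriptD$ into shells indexed by the ratio $\|\beta_1-\beta_2\|_1/\|\beta_1-\beta_2\|_2$, application of the fixed-$\delta$ bound~\eqref{EqnToad} on each shell, and a union bound over the $O(\log n)$ shells, followed by an AM--GM/Young inequality to pass from~\eqref{EqnPeelBd} to~\eqref{EqnPeelBd2}. The paper's parametrization of the shells (via $g(h(\beta_1,\beta_2))$ rather than $h$ directly) is cosmetically different but amounts to the same decomposition since $g$ is linear in $\delta$.
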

The proof of Lemma~\ref{LemPeel} is provided in Appendix~\ref{AppLemPeel}.

Finally, note that inequality~\eqref{EqnRoti} implies the bound
\begin{equation*}
\ftil(\beta_1, \beta_2) \le \alpha' \|\beta_1 - \beta_2\|_2^2 + \tau' \frac{\log p}{n} \|\beta_1 - \beta_2\|_1^2, \qquad \forall \beta_1, \beta_2 \in \real^p.
\end{equation*}
Together with inequality~\eqref{EqnQuadBd}, we then see that for a proper choice of the constant $c_0$, we have 
\begin{align}
\label{EqnPepper}
\T(\beta_1, \beta_2) & \ge -\kappa_2 \left(\alpha' \|\beta_1 - \beta_2\|_2^2 - \tau' \frac{\log p}{n} \|\beta_1 - \beta_2\|_1^2\right) \notag \\
& \ge \alpha_T \cdot \frac{\lambda_{\min}(\Sigma_x)}{16} \|\beta_1 - \beta_2\|_2^2 - \frac{c''' (\alpha_T + \kappa_2)^2 \sigma_x^2 T^2}{c_0^2 r^2} \frac{\log p}{n} \|\beta_1 - \beta_2\|_1^2
\end{align}
whenever $\frac{\|\beta_1 - \beta_2\|_1}{\|\beta_1 - \beta_2\|_2} > \frac{c_0r}{c'''(\alpha_T + \kappa_2) \sigma_x \tau} \sqrt{\frac{n}{\log p}}$. Combined with Lemma~\ref{LemPeel}, inequality~\eqref{EqnPepper} implies the RSC condition~\eqref{EqnLocalRSC}.


\subsubsection{Proof of Lemma~\ref{LemButterfly}}
\label{AppLemButterfly}

Note that
\begin{align}
\label{EqnButterfly}
\E\left[\left(x_i^T (\beta_1 - \beta_2)\right)^2\right] - \E[f(\beta_1, \beta_2)] & \le \E\left[\left(x_i^T (\beta_1 - \beta_2)\right)^2 1\left\{|x_i^T (\beta_1 - \beta_2)| \ge \frac{T}{8r} \|\beta_1 - \beta_2\|_2\right\}\right] \notag \\
& \qquad + \E\left[ \left(x_i^T (\beta_1 - \beta_2)\right)^2 1\left\{|\epsilon_i| \ge \frac{T}{2}\right\}\right] \notag \\
& \qquad + \E\left[\left(x_i^T (\beta_1 - \beta_2)\right)^2 1\left\{|x_i^T (\beta_2 - \betastar)| \ge \frac{T}{4} \right\}\right].
\end{align}
Applying the Cauchy-Schwarz inequality, we have bounds of the form
\begin{equation*}
\E\left[\left(x_i^T (\beta_1 - \beta_2)\right)^2 1_{E_i}\right] \le \E\left[\left(x_i^T (\beta_1 - \beta_2)\right)^4\right]^{1/2} \cdot \E\left[1_{E_i}\right]^{1/2} \le c \sigma_x^2 \|\beta_1 - \beta_2\|_2^2 \cdot \left(\mprob(E_i)\right)^{1/2},
\end{equation*}
where the second inequality holds because of the assumption that $x_i$ is sub-Gaussian with parameter $\sigma_x^2$.

Furthermore, note that
\begin{equation*}
\mprob\left(|x_i^T (\beta_2 - \betastar)| \ge \frac{T}{4}\right) \le c \exp\left(-\frac{c'T^2}{\sigma_x^2 r^2}\right),
\end{equation*}
since $x_i$ is sub-Gaussian and $\|\beta_2 - \betastar\|_2 \le r$ by assumption. Finally, we have
\begin{equation*}
\mprob\left(|x_i^T (\beta_1 - \beta_2)| \ge \frac{T}{8r} \|\beta_1 - \beta_2\|_2\right) \le c \exp\left(-\frac{c'T^2}{\sigma_x^2 r^2}\right),
\end{equation*}
also by sub-Gaussianity of $x_i$. Combining these bounds with inequality~\eqref{EqnButterfly} then implies the desired result.


\subsubsection{Proof of Lemma~\ref{LemExpZ}}
\label{AppLemExpZ}

We first bound $\E[Z(\delta)]$. Following the argument in the proof of Lemma 11 of Loh and Wainwright~\cite{LohWai13}, we have
\begin{equation*}
\E[Z(\delta)] \le 2 \sqrt{\frac{\pi}{2}} \E\left[\sup_{(\beta_1, \beta_2) \in B_\delta} \frac{1}{\|\Delta\|_2^2} \left| \frac{1}{n} \sum_{i=1}^n g_i \cdot \varphi_{\frac{T\|\Delta\|_2}{8r} } \left(x_i^T (\beta_1 - \beta_2)\right) \psi_{\frac{T}{2}}(\epsilon_i) \psi_{\frac{T}{4}} \left(x_i^T (\beta_2 - \betastar)\right) \right|\right],
\end{equation*}
where we denote $\Delta \defn \beta_1 - \beta_2$, and the $g_i$'s are i.i.d.\ standard Gaussians. Define
\begin{equation*}
Z_{\beta_1, \beta_2} \defn \frac{1}{\|\Delta\|_2^2} \cdot \frac{1}{n} \sum_{i=1}^n g_i \cdot \varphi_{\frac{T \|\Delta\|_2}{8r}} \left(x_i^T (\beta_1 - \beta_2)\right) \psi_{\frac{T}{2}} (\epsilon_i) \psi_{\frac{T}{4}} \left(x_i^T (\beta_2 - \betastar)\right),
\end{equation*}
and note that conditioned on the $x_i$'s, each variable $Z_{\beta_1, \beta_2}$ is a Gaussian process. Furthermore, for distinct pairs $(\beta_1, \beta_2)$ and $(\beta'_1, \beta'_2)$, we have
\begin{equation*}
\var\left(Z_{\beta_1, \beta_2} - Z_{\beta'_1, \beta'_2}\right) \le 2 \var\left(Z_{\beta_1, \beta_2} - Z_{\beta_2' + \Delta, \, \beta_2'}\right) + 2 \var\left(Z_{\beta_2 + \Delta', \, \beta_2'} - Z_{\beta'_1, \beta'_2}\right)
\end{equation*}
Continuing to condition on the $x_i$'s, and denoting $\Delta' \defn \beta_1' - \beta'_2$, note that
\begin{multline}
\label{EqnPeach}
\var\left(Z_{\beta'_2 + \Delta, \, \beta'_2} - Z_{\beta'_1, \beta'_2}\right) = \frac{1}{n^2} \sum_{i=1}^n \psi_{\frac{T}{2}}^2 (\epsilon_i) \psi_{\frac{T}{4}}^2 \left(x_i^T (\beta_2 - \betastar)\right) \\
\cdot \left(\frac{1}{\|\Delta\|_2^2} \varphi_{\frac{T\|\Delta\|_2}{8r}} \left(x_i^T \Delta \right) - \frac{1}{\|\Delta'\|_2^2} \varphi_{\frac{T \|\Delta'\|_2}{8r}}\left(x_i^T \Delta'\right)\right)^2.
\end{multline}
Furthermore, $\varphi$ satisfies the homogeneity property that
\begin{equation*}
\frac{1}{c^2} \cdot \varphi_{ct}(cu) = \varphi_t(u), \qquad \forall c > 0.
\end{equation*}
Hence, inequality~\eqref{EqnPeach} implies that
\begin{align*}
\var\left(Z_{\beta'_2 + \Delta, \, \beta'_2} - Z_{\beta'_1, \beta'_2}\right) & \le \frac{1}{n^2} \sum_{i=1}^n \frac{1}{\|\Delta\|_2^4} \left(\varphi_{\frac{T \|\Delta\|_2}{8r}} \left(x_i^T \Delta\right) - \varphi_{\frac{T \|\Delta\|_2}{8r}} \left(x_i^T \Delta' \cdot \frac{\|\Delta\|_2}{\|\Delta'\|_2} \right)\right)^2 \\
& \le \frac{1}{n^2} \sum_{i=1}^n \frac{1}{\|\Delta\|_2^4} \cdot \frac{T^2 \|\Delta\|_2^2}{64r^2} \left(x_i^T \Delta - x_i^T \Delta' \cdot \frac{\|\Delta\|_2}{\|\Delta'\|_2}\right)^2 \\
& = \frac{1}{n^2} \sum_{i=1}^n \frac{T^2}{64r^2} \left(\frac{x_i^T \Delta}{\|\Delta\|_2} - \frac{x_i^T \Delta'}{\|\Delta'\|_2}\right)^2,
\end{align*}
where the second inequality uses the Lipschitz property of $\varphi$. Similarly, we may calculate
\begin{multline}
\label{EqnAnt}
\var\left(Z_{\beta_1, \beta_2} - Z_{\beta'_2 + \Delta, \, \beta'_2}\right) = \frac{1}{n^2} \sum_{i=1}^n \frac{1}{\|\Delta\|_2^4} \psi^2_{\frac{T}{2}} (\epsilon_i) \\
\cdot \varphi_{\frac{T \|\Delta\|_2}{8r}}^2 \left(x_i^T \Delta\right) \left(\psi_{\frac{T}{4}} \left(x_i^T (\beta_2 - \betastar)\right) - \psi_{\frac{T}{4}} \left(x_i^T (\beta'_2 - \betastar)\right)\right)^2
\end{multline}
Using the fact that $\psi_{T/4}$ is $\frac{8}{T}$-Lipschitz and $\varphi_{\frac{T \|\Delta\|_2}{8r}} \le \frac{T^2 \|\Delta\|_2^2}{256r^2}$, inequality~\eqref{EqnAnt} implies that
\begin{equation*}
\var\left(Z_{\beta_1, \beta_2} - Z_{\beta'_2 + \Delta, \, \beta'_2}\right) \le \frac{1}{n^2} \sum_{i=1}^n \frac{T^4}{256^2 r^4} \cdot \frac{64}{T^2} \left(x_i^T (\beta_2 - \beta'_2)\right)^2 = \frac{1}{n^2} \sum_{i=1}^n \frac{T^2}{32^2 r^4} \left(x_i^T (\beta_2 - \beta'_2)\right)^2.
\end{equation*}
If we define the second Gaussian process
\begin{equation*}
Y_{\beta_1, \beta_2} \defn \frac{T}{16r^2} \cdot \frac{1}{n} \sum_{i=1}^n g'_i \cdot x_i^T \beta_2 + \frac{T}{4r} \cdot \frac{1}{n} \sum_{i=1}^n g''_i \cdot \frac{x_i^T (\beta_1 - \beta_2)}{\|\beta_1 - \beta_2\|_2},
\end{equation*}
where $g'_i$ and $g''_i$ are independent standard Gaussians, the above calculation implies that
\begin{equation*}
\var\left(Z_{\beta_1, \beta_2} - Z_{\beta'_1, \beta'_2}\right) \le \var\left(Y_{\beta_1, \beta_2} - Y_{\beta'_1, \beta'_2}\right).
\end{equation*}
Hence, Lemma 14 in Loh and Wainwright~\cite{LohWai13} implies that
\begin{equation*}
\E\left[\sup_{(\beta_1, \beta_2) \in B_\delta} Z_{\beta_1, \beta_2}\right] \le 2 \cdot \E\left[\sup_{(\beta_1, \beta_2) \in B_\delta} Y_{\beta_1, \beta_2}\right],
\end{equation*}
where the expectations are no longer conditional. By an argument from Ledoux and Talgrand~\cite{LedTal91}, we also have
\begin{equation*}
\E\left[\sup_{(\beta_1, \beta_2) \in B_\delta} |Z_{\beta_1, \beta_2}|\right] \le \E\left[|Z_{\beta'_1, \beta'_2}|\right] + 2 \cdot \E\left[\sup_{(\beta_1, \beta_2) \in B_\delta} Z_{\beta_1, \beta_2}\right],
\end{equation*}
for any fixed $(\beta'_1, \beta'_2) \in B_\delta$. Furthermore,
\begin{equation*}
\E\left[|Z_{\beta'_1, \beta'_2}|\right] \le \sqrt{\frac{2}{\pi}} \cdot \sqrt{\var\left(Z_{\beta'_1, \beta'_2}\right)} \le \sqrt{\frac{2}{\pi}} \cdot \frac{T^2}{256r^2} \cdot \frac{1}{\sqrt{n}},
\end{equation*}
by conditioning on the $x_i$'s and using the bounds on $\varphi$ and $\psi$. We also have the bound
\begin{align*}
\E\left[\sup_{(\beta_1, \beta_2) \in B_\delta} Y_{\beta_1, \beta_2}\right] & \le \frac{RT}{16r^2} \cdot \E\left[\left\|\frac{1}{n} \sum_{i=1}^n g'_i x_i\right\|_\infty\right] + \frac{\delta T}{4r} \cdot \E\left[\left\|\frac{1}{n} \sum_{i=1}^n g''_i x_i\right\|_\infty \right] \\
& \le c\sigma_x \left(\frac{RT}{r^2} + \frac{\delta T}{r} \right) \sqrt{\frac{\log p}{n}}.
\end{align*}
Hence,
\begin{equation}
\label{EqnBoba}
\E[Z(\delta)] \le c'\sigma_x^2 \left(\frac{RT}{r^2} + \frac{\delta T}{r} \right) \sqrt{\frac{\log p}{n}}.
\end{equation}

Further note that for $(\beta_1, \beta_2) \in B_\delta$, each summand in $f(\beta_1, \beta_2)$ lies in the interval $\left[0, \; \frac{T^2}{64r^2}\right]$. Hence, by the bounded differences inequality, we have
\begin{equation}
\label{EqnZTail}
\mprob\left(\left|Z(\delta) - \E[Z(\delta)]\right| \ge t\right) \le c \exp\left(-\frac{c' r^2}{T^2} nt^2\right).
\end{equation}
Combining inequalities~\eqref{EqnBoba} and~\eqref{EqnZTail} then gives the desired result.


\subsubsection{Proof of Lemma~\ref{LemPeel}}
\label{AppLemPeel}

We parallel the peeling argument constructed in the proof of Lemma 11 in Loh and Wainwright~\cite{LohWai13}. Define the event
\begin{equation*}
\scriptE \defn \left\{\text{inequality}~\eqref{EqnPeelBd} \text{ holds } \forall \beta_1, \beta_2 \in \scriptD \right\},\end{equation*}
and define the functions
\begin{align*}
\htil(\beta_1, \beta_2; X) & \defn \alpha_T \cdot \frac{\lambda_{\min}(\Sigma_x)}{8} - \frac{\T(\beta_1, \beta_2)}{\|\beta_1 - \beta_2\|_2^2}, \\
g(\delta) & \defn \frac{c''(\alpha_T + \kappa_2) \sigma_x T}{2c_0 r} \cdot \delta \sqrt{\frac{\log p}{n}}, \\
h(\beta_1, \beta_2) & \defn \frac{\|\beta_1 - \beta_2\|_1}{\|\beta_1 - \beta_2\|_2}.
\end{align*}
By inequality~\eqref{EqnToad}, we have
\begin{equation}
\label{EqnLemonPeel}
\mprob\left(\sup_{\stackrel{(\beta_1, \beta_2) \in \scriptD:}{h(\beta_1, \beta_2) \le \delta}} \htil(\beta_1, \beta_2; X) \ge g(\delta)\right) \le c_1 \exp(-c_2 \log p),
\end{equation}
for any $1 \le \delta \le \frac{c_0 \alpha_T \lambda_{\min}(\Sigma_x) r}{\sigma_x (\alpha_T + \kappa_2)T} \sqrt{\frac{n}{\log p}}$. Since $\frac{\|\beta_1 - \beta_2\|_1}{\|\beta_1 - \beta_2\|_2} \ge 1$, we have
\begin{equation*}
1 \le h(\beta_1, \beta_2) \le \frac{c_0 \alpha_T \lambda_{\min}(\Sigma_x)}{\sigma_x (\alpha_T + \kappa_2) T} \sqrt{\frac{n}{\log p}},
\end{equation*}
over the region of interest. For each integer $m \ge 1$, define the set
\begin{equation*}
V_m \defn \left\{(\beta_1, \beta_2): 2^{m-1} \mu \le g(h(\beta_1, \beta_2)) \le 2^m \mu\right\} \cap \scriptD,
\end{equation*}
where $\mu \defn \frac{c c_0 \sigma_x T}{r} \sqrt{\frac{\log p}{n}}$. A union bound gives
\begin{equation*}
\mprob(\scriptE^c) \le \sum_{m=1}^M \mprob\left\{\exists (\beta_1, \beta_2) \in V_m: \htil(\beta_1, \beta_2; X) \ge 2 g(h(\beta_1, \beta_2)) \right\},
\end{equation*}
where $M \defn \left \lceil \log\left(c \sqrt{\frac{n}{\log p}}\right) \right \rceil$. Then
\begin{equation*}
\mprob(\scriptE^c) \le \sum_{m=1}^M \mprob\left(\sup_{\stackrel{(\beta_1, \beta_2) \in \scriptD:}{h(\beta_1, \beta_2) \le g^{-1}(2^m \mu)}} \htil(\beta_1, \beta_2; X) \ge 2^m \mu\right) \le M \cdot c_1 \exp(-c_2 \log p),
\end{equation*}
using inequality~\eqref{EqnLemonPeel}. Hence,
\begin{equation*}
\mprob(\scriptE^c) \le Mc_1 \exp\left(-c_2 \log p + \log \log \left(\frac{n}{\log p}\right)\right) \le c_1' \exp(-c_2' \log p).
\end{equation*}
Inequality~\eqref{EqnPeelBd2} holds by applying the arithmetic mean-geometric mean inequality to inequality~\eqref{EqnPeelBd}.


\subsection{Proof of Proposition~\ref{PropMallows}}
\label{AppPropMallows}

Again defining $\T(\beta_1, \beta_2)$ as in equation~\eqref{EqnSquirrel}, we have
\begin{equation}
\label{EqnPhilly}
\T(\beta_1, \beta_2) = \frac{1}{n} \sum_{i=1}^n w(x_i) \left(\ell'(x_i^T \beta_1 - y_i) - \ell'(x_i^T \beta_2 - y_i)\right) x_i^T (\beta_1 - \beta_2).
\end{equation}
Defining the event $A_i$ as in equation~\eqref{EqnADefn}, inequality~\eqref{EqnPhilly} implies that
\begin{align*}
\T(\beta_1, \beta_2) & \ge \frac{1}{n} \sum_{i=1}^n w(x_i) \left(\ell'(x_i^T \beta_1 - y_i) - \ell'(x_i^T \beta_2 - y_i)\right) x_i^T (\beta_1 - \beta_2) \\
& \ge \alpha_T \cdot \frac{1}{n} \sum_{i=1}^n w(x_i)\left(x_i^T (\beta_1 - \beta_2)\right)^2 1_{A_i} - \kappa_2 \cdot \frac{1}{n} \sum_{i=1}^n w(x_i) \left(x_i^T (\beta_1 - \beta_2)\right)^2 1_{A_i^c} \\
& = (\alpha_T + \kappa_2) \cdot \frac{1}{n} \sum_{i=1}^n w(x_i) \left(x_i^T (\beta_1 - \beta_2)\right)^2 1_{A_i} - \kappa_2 \cdot \frac{1}{n} \sum_{i=1}^n w(x_i) \left(x_i^T (\beta_1 - \beta_2)\right)^2.
\end{align*}
Note that $w(x_i) x_i$ is a sub-Gaussian vector with parameter $cb_0^2$. Defining the truncation functions $\varphi$ and $\psi$ as in equations~\eqref{EqnTrunc}, we then have
\begin{equation*}
\T(\beta_1, \beta_2) \ge (\alpha_T + \kappa_2) \cdot f(\beta_1, \beta_2) - \kappa_2 \cdot \ftil(\beta_1, \beta_2),
\end{equation*}
as in inequality~\eqref{EqnCricket}, where
\begin{align*}
f(\beta_1, \beta_2) & \defn \frac{1}{n} \sum_{i=1}^n w(x_i) \cdot \varphi_{T \|\beta_1 - \beta_2\|_2/8r} \left(x_i^T (\beta_1 - \beta_2)\right) \cdot \psi_{T/2} (\epsilon_i) \cdot \psi_{T/4} \left(x_i^T (\beta_2 - \betastar)\right), \\
\ftil(\beta_1, \beta_2) & \defn \frac{1}{n} \sum_{i=1}^n w(x_i) \left(x_i^T (\beta_1 - \beta_2)\right)^2.
\end{align*}

We first obtain an analog of Lemma~\ref{LemButterfly}, as follows:
\begin{lem*}
\label{LemCaterpillar}
We have the bound
\begin{equation*}
\E\left[\ftil(\beta_1, \beta_2)\right] - \E[f(\beta_1, \beta_2)] \le c b_0 \sigma_x^2 \|\beta_1 - \beta_2\|_2^2 \left(\epsilon_T^{1/2} + \exp\left(- \frac{c'T}{\sigma_x r}\right)\right).
\end{equation*}
\end{lem*}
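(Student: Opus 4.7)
The plan is to mirror the three-term decomposition used in the proof of Lemma~\ref{LemButterfly} (Appendix~\ref{AppLemButterfly}), modified to incorporate the Mallows weight $w(x_i)$ and to replace sub-Gaussian tail bounds by sub-exponential ones. Write $\Delta \defn \beta_1 - \beta_2$. Since $\varphi_t(u) \le u^2$ and $\psi_t(u) \le 1$ pointwise, each summand in $f(\beta_1,\beta_2)$ is dominated by the corresponding summand in $\ftil(\beta_1,\beta_2)$, and the difference is nonzero only on the complement of the event $A_i$ defined in~\eqref{EqnADefn}. A union-bound argument over the three constituent events yields
\begin{equation*}
\E\left[\ftil(\beta_1,\beta_2)\right] - \E[f(\beta_1,\beta_2)] \;\le\; \sum_{j=1}^{3} \E\left[w(x_i)(x_i^T\Delta)^2 \, \mathbf{1}_{E_i^{(j)}}\right],
\end{equation*}
where $E_i^{(1)} = \{|x_i^T\Delta| \ge T\|\Delta\|_2/(8r)\}$, $E_i^{(2)} = \{|\epsilon_i| \ge T/2\}$, and $E_i^{(3)} = \{|x_i^T(\beta_2-\betastar)| \ge T/4\}$.

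Next I would apply Cauchy--Schwarz to each of the three terms, producing a moment factor $\E\left[w(x_i)^2 (x_i^T\Delta)^4\right]^{1/2}$ multiplied by $\mprob(E_i^{(j)})^{1/2}$. The moment factor is controlled by combining the uniform bound $\|w(x_i)x_i\|_2 \le b_0$ (established in Section~\ref{SecGMEst}, which gives $|w(x_i)x_i^T\Delta| \le b_0\|\Delta\|_2$) with sub-exponentiality of $x_i$:
\begin{equation*}
\E\left[w(x_i)^2 (x_i^T\Delta)^4\right] \;\le\; b_0^2 \|\Delta\|_2^2 \cdot \E\left[(x_i^T\Delta)^2\right] \;\le\; c\, b_0^2 \sigma_x^2 \|\Delta\|_2^4.
\end{equation*}
This is the analogue of the sub-Gaussian fourth-moment bound $\E[(x_i^T\Delta)^4]^{1/2} \le c\sigma_x^2\|\Delta\|_2^2$ used in Lemma~\ref{LemButterfly}, but now we trade one factor of $\sigma_x$ for the boundedness constant $b_0$ because $x_i$ itself is only sub-exponential.

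Finally I would bound the three tail probabilities. By definition, $\mprob(E_i^{(2)}) = \epsilon_T$. For $E_i^{(1)}$ and $E_i^{(3)}$, the sub-exponentiality of $x_i$ gives $\mprob(|x_i^T v| \ge t) \le c\exp\!\bigl(-c' t/(\sigma_x \|v\|_2)\bigr)$ for all $v \in \real^p$. Applied with $v = \Delta$, $t = T\|\Delta\|_2/(8r)$ this yields $\mprob(E_i^{(1)}) \le c\exp(-c'T/(r\sigma_x))$; applied with $v = \beta_2 - \betastar$, $t = T/4$ (using $\|\beta_2-\betastar\|_2 \le r$) it yields the same order of bound for $\mprob(E_i^{(3)})$. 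Plugging these estimates back into the Cauchy--Schwarz inequality and summing across the three terms gives the stated bound after absorbing numerical constants.

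The only subtle point I would flag is the sub-exponential tail rate: unlike the sub-Gaussian setting of Lemma~\ref{LemButterfly}, the exponent here is linear in $T/(r\sigma_x)$ rather than quadratic in $T^2/(r^2\sigma_x^2)$. This is precisely why the weighting makes distributional assumptions on $x_i$ easier to satisfy but tightens the required scaling between $T$ and $r$ in condition~\eqref{EqnTrickyWeight} of Proposition~\ref{PropMallows}. No new probabilistic machinery is needed beyond what was used for Lemma~\ref{LemButterfly}; the proof is essentially a careful bookkeeping exercise in which $w(x_i)x_i$ plays the role that $x_i$ played in the unweighted case, with the boundedness $\|w(x_i)x_i\|_2 \le b_0$ substituting for sub-Gaussianity whenever fourth moments are invoked.
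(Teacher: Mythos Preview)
Your proposal is correct and follows essentially the same approach as the paper: the same three-event decomposition of $A_i^c$, the same Cauchy--Schwarz split into a moment factor $\E[w(x_i)^2(x_i^T\Delta)^4]^{1/2}$ times a tail probability, and the same use of sub-exponential tails to get the linear exponent $T/(\sigma_x r)$. Your moment bound via $|w(x_i)x_i^T\Delta| \le b_0\|\Delta\|_2$ actually yields a prefactor $cb_0\sigma_x$ rather than the paper's stated $cb_0\sigma_x^2$, but this is just a harmless constant discrepancy (either a looser bookkeeping in the paper or a difference in how the sub-exponential parameter is normalized), not a gap in your argument.
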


\begin{proof}

We have
\begin{align}
\label{EqnBrown}
\E\left[w(x_i) \left(x_i^T (\beta_1 - \beta_2)\right)^2\right] - & \E[f(\beta_1, \beta_2)] \notag \\
& \le \E\left[w(x_i) \left(x_i^T (\beta_1 - \beta_2)\right)^2 1\left\{|x_i^T (\beta_1 - \beta_2)| \ge \frac{T}{8r} \|\beta_1 - \beta_2\|_2\right\}\right] \notag \\
& \qquad + \E\left[w(x_i) \left(x_i^T (\beta_1 - \beta_2)\right)^2 1 \left\{|\epsilon_i| \ge \frac{T}{2}\right\}\right] \notag \\
& \qquad + \E\left[w(x_i) \left(x_i^T (\beta_1 - \beta_2)\right)^2 1 \left\{|x_i^T (\beta_2 - \betastar)| \ge \frac{T}{4} \right\}\right].
\end{align}
Applying the Cauchy-Schwarz inequality to each term, the right-hand side of inequality~\eqref{EqnBrown} is then upper-bounded by
\begin{align*}
& \E\left[w^2(x_i) \left(x_i^T (\beta_1 - \beta_2)\right)^4\right]^{1/2} \cdot \Bigg\{\mprob\left(|x_i^T (\beta_1 - \beta_2)| \ge \frac{T}{8r} \|\beta_1 - \beta_2\|_2\right)^{1/2} \\
& \qquad \qquad \qquad \qquad \qquad \qquad \qquad \qquad + \mprob\left(|\epsilon_i| \ge \frac{T}{2}\right)^{1/2} + \mprob\left(|x_i^T (\beta_2 - \betastar)| \ge \frac{T}{4}\right)^{1/2}\Bigg\} \\
& \qquad \qquad \qquad \qquad \le c b_0 \sigma_x^2 \|\beta_1 - \beta_2\|_2^2 \left(\epsilon_T^{1/2} + \exp\left(- \frac{c'T}{\sigma_x r}\right)\right),
\end{align*}
using the assumption that $x_i$ is sub-exponential.

\end{proof}

Note that the statement of Lemma~\ref{LemExpZ} holds without modification, because the additional factor of $w(x_i)$ vanishes in the Gaussian comparison argument in the proof of the lemma, since $w(x_i) \le 1$. Furthermore, $\ftil(\beta_1, \beta_2)$ is again a quadratic form in $\beta_1 - \beta_2$, and since $w(x_i) x_i$ is bounded and $x_i$ is sub-exponential, the quantity $\frac{1}{n} \sum_{i=1}^n w(x_i) (x_i^T v)^2$ is an i.i.d.\ average of sub-exponential terms with parameter proportional to $b_0 \sigma_x^2$. Hence, a version of inequality~\eqref{EqnRoti} holds, with $\sigma_x^2$ replaced by $b_0 \sigma_x^2$. Then Lemma~\ref{LemPeel} follows by an identical peeling argument. Putting together the pieces, we arrive at the desired result.


\subsection{Proof of Proposition~\ref{PropHillRyan}}
\label{AppPropHillRyan}

This is very similar to the proof of Proposition~\ref{PropMRSC}. Again using the notation for the Taylor remainder defined in equation~\eqref{EqnSquirrel}, we have
\begin{equation*}
\T(\beta_1, \beta_2) = \frac{1}{n} \sum_{i=1}^n w(x_i) x_i^T (\beta_1 - \beta_2) \left\{\ell'\left((x_i^T \beta_1 - y_i) w(x_i)\right) - \ell'\left((x_i^T \beta_2 - y_i) w(x_i)\right)\right\}.
\end{equation*}
Defining
\begin{equation*}
A_i \defn \left\{|\epsilon_i| \le \frac{T}{2}\right\} \cap \left\{|w(x_i) x_i^T (\beta_1 - \beta_2)| \le \frac{T}{8r} \|\beta_1 - \beta_2\|_2\right\} \cap \left\{|w(x_i) x_i^T (\beta_2 - \betastar)| \le \frac{T}{4} \right\},
\end{equation*}
we have that on the event $A_i$ and for $\|\beta_1 - \betastar\|_2, \|\beta_2 - \betastar\|_2 \le r$,
\begin{equation*}
|w(x_i) (x_i^T \beta_2 - y_i)| \le |w(x_i) x_i^T (\beta_2 - \betastar)| + |w(x_i) \epsilon_i| \le |w(x_i) x_i^T (\beta_2 - \betastar)| + |\epsilon_i| \le T,
\end{equation*}
and
\begin{equation*}
|w(x_i) (x_i^T \beta_1 - y_i)| \le |w(x_i) x_i^T (\beta_1 - \beta_2)| + |w(x_i) x_i^T (\beta_2 - \betastar)| + |w(x_i) \epsilon_i| \le \frac{T}{4} + \frac{T}{4} + \frac{T}{2},
\end{equation*}
using the fact that $w(x_i) \le 1$. Hence, we have
\begin{align*}
\T(\beta_1, \beta_2) & \ge \alpha_T \cdot \frac{1}{n} \sum_{i=1}^n \left(w(x_i) x_i^T (\beta_1 - \beta_2)\right)^2 1_{A_i} - \kappa_2 \cdot \frac{1}{n} \sum_{i=1}^n \left(w(x_i) x_i^T (\beta_1 - \beta_2)\right)^2 1_{A_i^c} \\
& = (\alpha_T + \kappa_2) \cdot \frac{1}{n} \sum_{i=1}^n \left(w(x_i) x_i^T (\beta_1 - \beta_2)\right)^2 1_{A_i} - \kappa_2 \cdot \frac{1}{n} \sum_{i=1}^n \left(w(x_i) x_i^T (\beta_1 - \beta_2)\right)^2.
\end{align*}
We may define the truncation functions exactly as in the proof of Proposition~\ref{PropMRSC}, the only modification being that $x_i$ is replaced by $w(x_i) x_i$. Furthermore, since $|w(x_i) x_i^T v| \le b_0$ for every unit vector $v$, the vector $w(x_i) x_i$ is always sub-Gaussian with parameter $b_0^2$, regardless of the distribution of $x_i$. It follows that with
\begin{align*}
f(\beta_1, \beta_2) & \defn \frac{1}{n} \sum_{i=1}^n \varphi_{T \|\beta_1 - \beta_2\|_2/8r} \left(w(x_i) x_i^T (\beta_1 - \beta_2)\right) \cdot \psi_{T/2} (\epsilon_i) \cdot \psi_{T/4} \left(w(x_i) x_i^T (\beta_2 - \betastar)\right), \\
\ftil(\beta_1, \beta_2) & \defn \frac{1}{n} \sum_{i=1}^n \left(w(x_i) x_i^T (\beta_1 - \beta_2)\right)^2,
\end{align*}
we arrive at the familiar inequality,
\begin{equation*}
\T(\beta_1, \beta_2) \ge (\alpha_T + \kappa_2) \cdot f(\beta_1, \beta_2) - \kappa_2 \cdot \ftil(\beta_1, \beta_2).
\end{equation*}
The remainder of the proof is identical to the proof of Proposition~\ref{PropMRSC}, with $x_i$ replaced by $w(x_i) x_i$, which is sub-Gaussian with parameter $b_0^2$.

\section{Proof of Corollary~\ref{CorDist}}
\label{AppCorDist}

The proof of this corollary is a fairly immediate consequence of Theorem~\ref{ThmOracle} and the following result from He and Shao~\cite{HeSha00}:

\begin{lem*} [Corollary 2.1, He and Shao~\cite{HeSha00}]
\label{LemHeShao}
Suppose we have i.i.d.\ observations from the usual linear regression model
\begin{equation*}
y_i = x_i^T \betastar + \epsilon_i,
\end{equation*}
where $\betastar \in \real^p$. Suppose
\begin{equation*}
\Loss_n(\beta) = \frac{1}{n} \sum_{i=1}^n \ell(x_i^T \beta - y_i),
\end{equation*}
and the following conditions are satisfied:
\begin{itemize}
\item[(i)] In probability, $0 < \lambda_{\min}\left(\frac{X^TX}{n}\right)$ and $\lambda_{\max}\left(\frac{X^TX}{n}\right) < \infty$.
\item[(ii)] $\ell$ is convex and smooth, $\ell''$ and $\ell'''$ are bounded, and $\E\left[\ell''(\epsilon_i)\right] \in (0, \infty)$.
\item[(iii)] $\max_{1 \le i \le n} \frac{\|x_i\|_2^2}{p} = \order_P(1)$ and $\sup_{\|u\|_2 = \|w\|_2 = 1} \frac{1}{n} \sum_{i=1}^n |x_i^T u|^2 |x_i^T w|^2 = \order_P(1)$.
\end{itemize}
Suppose $\Loss_n$ has a unique minimizer given by $\betahat$. If $\frac{p \log ^3p}{n} \rightarrow 0$, then $\|\betahat - \betastar\|_2 = \order_p\left(\sqrt{\frac{p}{n}}\right)$. If $\frac{p^2 \log p}{n} \rightarrow 0$, then for any unit vector $v \in \real^p$, we have
\begin{equation*}
\frac{\sqrt{n}}{\sigma_v} \cdot v^T (\betahat - \betastar) \stackrel{d}{\longrightarrow} N(0, 1),
\end{equation*}
where
\begin{equation*}
\sigma^2_v \defn \frac{1}{\E\left[\ell''(\epsilon_i)\right] \cdot \E\left[\left(\ell'(\epsilon_i)\right)^2\right]} \cdot v^T \left(\frac{X^TX}{n}\right) v.
\end{equation*}
\end{lem*}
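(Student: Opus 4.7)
The plan is to follow the classical three-step strategy for unregularized convex $M$-estimators with diverging dimension: establish $\sqrt{p/n}$-consistency of $\betahat$, derive a Bahadur-type linear representation of $\betahat - \betastar$, and apply a finite-dimensional Lindeberg--Feller CLT to the leading linear term. Throughout, I would use condition (ii) to Taylor-expand the score $\psi = \ell'$ (so that $\psi'=\ell''$ and $\psi''=\ell'''$ are bounded) and conditions (i) and (iii) to control the stochastic behavior of the sample Hessian and higher-order remainders over shrinking neighborhoods of $\betastar$.

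For the consistency step, I would exploit convexity of $\Loss_n$ together with the first-order condition $\frac{1}{n}\sum_i \psi(x_i^T\betahat - y_i)x_i = 0$. Setting $\Delta = \betahat - \betastar$ and Taylor-expanding $\psi(\epsilon_i - x_i^T\Delta)$ yields
\begin{equation*}
\frac{1}{n}\sum_i \psi(\epsilon_i)x_i \;=\; \left(\frac{1}{n}\sum_i \psi'(\epsilon_i)x_ix_i^T\right)\Delta \;+\; R_n(\Delta),
\end{equation*}
where $\|R_n(\Delta)\|_2$ is controlled by $\|\psi''\|_\infty$ times the fourth-moment supremum in condition (iii) times $\|\Delta\|_2^2$. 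The score average has mean zero and covariance $\E[\psi(\epsilon_i)^2]\cdot X^TX/n^2$ by independence of $x_i$ and $\epsilon_i$, so standard Gaussian-type concentration gives $\bigl\|\frac{1}{n}\sum_i\psi(\epsilon_i)x_i\bigr\|_2 = O_P(\sqrt{p/n})$. A covering argument over the unit sphere of $\real^p$ (entropy $\lesssim p$) combined with Bernstein-type control of the sub-exponential summands $\psi'(\epsilon_i)(x_i^T u)^2$ yields the uniform bound
\begin{equation*}
\sup_{\|u\|_2 = 1} \left| u^T\!\left(\frac{1}{n}\sum_i \psi'(\epsilon_i)x_ix_i^T - \E[\psi'(\epsilon_i)]\cdot\frac{X^TX}{n}\right)\!u \right| \;=\; o_P(1)
\end{equation*}
under the scaling $\frac{p\log^3 p}{n}\to 0$. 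Combining with condition (i) to invert the limiting Hessian, I would deduce $\|\Delta\|_2 = O_P(\sqrt{p/n})$.

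For asymptotic normality I would refine the expansion to the Bahadur representation
\begin{equation*}
\Delta \;=\; \frac{1}{\E[\ell''(\epsilon_i)]}\left(\frac{X^TX}{n}\right)^{\!-1}\!\frac{1}{n}\sum_i \psi(\epsilon_i)x_i \;+\; \widetilde R_n,
\end{equation*}
with the goal of showing $\|\widetilde R_n\|_2 = o_P(n^{-1/2})$. The error from replacing the sample Hessian by its population counterpart contributes $O_P(\sqrt{p/n})\cdot \|\Delta\|_2 = O_P(p/n)$, while the cubic Taylor remainder, bounded using condition (iii) and the consistency rate, contributes $O_P(p^{3/2}/n)$; both are $o_P(n^{-1/2})$ precisely under $\frac{p^2\log p}{n}\to 0$. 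The CLT for $\frac{\sqrt n}{\sigma_v}\cdot v^T\Delta$ then follows from Lindeberg--Feller applied to $\frac{1}{\sqrt n}\sum_i \psi(\epsilon_i)\cdot v^T(X^TX/n)^{-1}x_i$: the summands are independent mean-zero with variances summing to $\sigma_v^2$, and the Lindeberg condition is verified using $\max_i\|x_i\|_2^2/p = O_P(1)$ from condition (iii) together with finiteness of $\E[\psi(\epsilon_i)^2]$ implied by boundedness of $\ell''$.

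The main obstacle I anticipate is the uniform Hessian concentration in the consistency step, which is what forces the $\log^3 p$ factor: the $\epsilon$-net cardinality on the unit sphere of $\real^p$ scales as $c^p$, while the summands $\psi'(\epsilon_i)(x_i^T u)^2$ are sub-exponential rather than uniformly bounded, so the Bernstein tail must be balanced carefully against the covering entropy, requiring an auxiliary truncation at a level calibrated to the fourth-moment supremum in condition (iii). This delicate interplay---together with the parallel uniform bound needed for the cubic Taylor term to push the remainder to $o_P(n^{-1/2})$ under the sharper scaling $\frac{p^2\log p}{n}\to 0$---is the technical core of He and Shao's argument.
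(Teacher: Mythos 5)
You should first be aware that the paper offers no proof of this statement: it is quoted verbatim as Corollary 2.1 of He and Shao~\cite{HeSha00} and invoked as a black box in the proof of Corollary~\ref{CorDist}, so there is no internal argument to compare yours against. That said, your reconstruction follows the same classical route that He and Shao (and before them Huber, Portnoy, and Mammen) actually take: $\sqrt{p/n}$-consistency via the score equation and uniform Hessian concentration, a Bahadur-type linear representation, and a Lindeberg--Feller CLT for the leading linear term, with conditions (i)--(iii) playing essentially the roles you assign them.

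One quantitative step in your sketch does not go through as written. You set out to prove $\|\widetilde R_n\|_2 = o_P(n^{-1/2})$ and assert that the cubic Taylor contribution of order $\order_P(p^{3/2}/n)$ is $o_P(n^{-1/2})$ ``precisely under $p^2 \log p / n \rightarrow 0$''; but $p^{3/2}/n = o(n^{-1/2})$ is equivalent to $p^3/n \rightarrow 0$, which is strictly stronger than the stated scaling (take $p = n^{2/5}$). The conclusion is a CLT for the scalar $v^T(\betahat - \betastar)$ only, and correspondingly the remainder need only be controlled \emph{after projection}: $v^T \widetilde R_n$ picks up a single factor $x_i^T v$, and a Cauchy--Schwarz bound using the fourth-moment supremum in condition (iii) gives $|v^T \widetilde R_n| \precsim \|\ell'''\|_\infty \, \|\betahat - \betastar\|_2^2 = \order_P(p/n)$, which is $o_P(n^{-1/2})$ exactly when $p^2/n \rightarrow 0$. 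The same directional (rather than operator-norm) control is what makes the Hessian-replacement error admissible. A smaller point: finiteness of $\E[(\ell'(\epsilon_i))^2]$ does not follow from boundedness of $\ell''$ alone, which only yields $|\ell'(\epsilon)| \precsim 1 + |\epsilon|$; it must be assumed (as it is implicitly, for $\sigma_v^2$ to be well defined), though it holds automatically for the bounded-$\ell'$ losses this paper actually feeds into the lemma.
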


We apply the result to the oracle estimator $\boracle_S$ defined in equation~\eqref{EqnBoracle}, with $k$ taking the place of $p$. Although Lemma~\ref{LemHeShao} requires $\Loss_n$ to be convex, a careful inspection of the proofs in He and Shao~\cite{HeSha00} reveals that the results still hold if we restrict our attention to a subset of $\real^p$ on which $\Loss_n$ is convex and $\betahat$ is the unique minimizer. By Lemma~\ref{LemConvex}, this is exactly the case over the restricted region $S_r$, when $\Loss_n$ satisfies the RSC condition~\eqref{EqnLocalRSC}. Furthermore, it is straightforward to check that conditions (i)--(iii) of Lemma~\ref{LemHeShao} under the given assumptions. Note that by Theorem 2.1 in Hsu et al.~\cite{HsuEtal12}, we have
\begin{equation*}
\mprob\left(\frac{\|x_i\|_2^2}{k} \ge t\right) \le c_1 \exp(-c_2 k), \qquad \forall i,
\end{equation*}
when the $x_i$'s are sub-Gaussian, implying that
\begin{equation*}
\mprob\left(\max_{1 \le i \le n} \frac{\|x_i\|_2^2}{k} \ge t \right) \le n \cdot c_1 \exp(-c_2 k).
\end{equation*}
Hence, for $k \ge C \log n$, the right-hand expression is bounded above by $c_1 \exp(-c_2' k)$, and the first part of condition (iii) is satisfied.

We conclude that the desired results hold for the oracle estimator $\boracle$, and by Theorem~\ref{ThmOracle}, also for $\betatil$.


\section{Proofs of additional lemmas}
\label{AppLemmas}

In this section, we provide proofs of additional technical lemmas appearing in the body of the paper.


\subsection{Proof of Lemma~\ref{LemConvex}}
\label{AppLemConvex}

For $\beta_1, \beta_2 \in S_r$, we have
\begin{equation*}
\|\beta_1 - \beta_2\|_1 \le \sqrt{k} \|\beta_1 - \beta_2\|_2.
\end{equation*}
Hence, the RSC condition~\eqref{EqnLocalRSC} implies that
\begin{equation*}
\inprod{\nabla \Loss_n(\beta_1) - \nabla \Loss_n(\beta_2)}{\beta_1 - \beta_2} \ge \left(\alpha - \tau \frac{k \log p}{n}\right) \|\beta_1 - \beta_2\|_2^2,
\end{equation*}
implying the desired conclusion.


\subsection{Proof of Lemma~\ref{LemStableLasso}}
\label{AppLemStableLasso}

Note that if $X = (X_1, \dots, X_n)$ is a vector of i.i.d.\ $\alpha$-stable random variables with $\gamma = 1$, equation~\eqref{EqnAlphaChar} implies that for $w \in \real^n$, we have
\begin{equation*}
\E\left[\exp\left(it \cdot w^T X\right)\right] = \exp\left(-\|w\|_\alpha^\alpha \; |t|^\alpha \right), \qquad \forall t > 0,
\end{equation*}
where $\|w\|_\alpha \defn \left(\sum_{i=1}^n |w|^\alpha\right)^{1/\alpha}$. Hence, $w^T X$ is also $\alpha$-stable, but with the scale parameter $\|w\|_\alpha$. Furthermore, if $Z \in \real$ is sub-Gaussian with parameter $\sigma_z^2$, then for $\alpha \in (0,2]$, the random variable $|Z|^\alpha$ is sub-exponential with parameter $c \sigma_z^2$. Indeed, the moments of $|Z|^\alpha$ may be bounded as
\begin{equation*}
\E\left[|Z|^{\alpha p}\right]^{1/p} \le \E\left[|Z|^{2p}\right]^{\frac{\alpha}{2p}} \le \left(c \sigma_z \sqrt{p}\right)^\alpha \le c'\sigma_z^2 p,
\end{equation*}
where the first inequality comes from H\"{o}lder's inequality and the second inequality follows because $Z$ is sub-Gaussian~\cite{Ver12}. Hence, $Z^{\alpha}$ is sub-exponential. Consequently, for any $1 \le j \le p$, the quantity
\begin{equation*}
\left\|\frac{Xe_j}{n^{1/\alpha}}\right\|_\alpha^\alpha = \frac{1}{n} \|X e_j\|_\alpha^\alpha = \frac{1}{n} \sum_{i=1}^n |x_{ij}|^\alpha
\end{equation*}
exhibits sub-exponential concentration to $\E[|X_{ij}|^\alpha]$.

In the context of ordinary least squares regression with the Lasso, note that for an arbitrary $1 \le j \le p$, we have
\begin{equation}
\label{EqnChaat}
\mprob\left(\left\|\frac{X^T \epsilon}{n}\right\|_\infty \ge \lambda\right) = \mprob\left(\left\|\frac{X^T \epsilon}{n^{1/\alpha}}\right\|_\infty \ge n^{1-1/\alpha} \lambda\right) \ge \mprob\left(\left|\frac{e_j^T X^T \epsilon}{n^{1/\alpha}} \right| \ge n^{1 - 1/\alpha} \lambda\right).
\end{equation}
Since $\left |\frac{e_j^T X \epsilon}{n^{1/\alpha}}\right |$ is $\alpha$-stable with scale parameter $\Theta\left(\E[|X_{ij}|^\alpha]\right)$, by the above discussion, the right-hand expression in inequality~\eqref{EqnChaat} is bounded below by a constant $c_\alpha$ whenever $n^{1 - 1/\alpha} \lambda \rightarrow 0$. In particular, this is the case when $\alpha < 2$. Hence, we conclude that the bound
\begin{equation*}
\left\|\frac{X^T \epsilon}{n}\right\|_\infty \precsim \sqrt{\frac{\log p}{n}}
\end{equation*}
does \emph{not} hold w.h.p.\ when the entries of $\epsilon$ are drawn from an $\alpha$-stable distribution with $\alpha < 2$.

Finally, recall that if $\betahat$ is a global solution for the Lasso and $\lambda \succsim \left\|\frac{X^T \epsilon}{n}\right\|_\infty$, we have the $\ell_2$-error bound
\begin{equation*}
\|\betahat - \betastar\|_2 \le c\sqrt{k} \cdot \max\left\{\lambda, \; \left\|\frac{X^T \epsilon}{n}\right\|_\infty\right\},
\end{equation*}
with high probability~\cite{BicEtal08}. This establishes the inconsistency of the Lasso estimator.

\bibliography{refs.bib}

\end{document}